\newtheorem{theorem}{Theorem}[section]
\newtheorem{lemma}[theorem]{Lemma}
\newtheorem{corollary}[theorem]{Corollary}
\theoremstyle{definition}
\newtheorem{definition}[theorem]{Definition}
\newtheorem{remark}[theorem]{Remark}
\newtheorem*{remark*}{Remark}
\newtheorem{algorithm}[theorem]{Algorithm}
\numberwithin{equation}{section}
\numberwithin{figure}{section}
\numberwithin{table}{section}
\newcommand{\dx}{\, dx}
\newcommand{\dt}{\, dt}
\newcommand{\bbT}{\mathbb T}
\newcommand{\bbR}{\mathbb R}
\newcommand{\bbS}{\mathbb S}
\newcommand{\bbZ}{\mathbb Z}
\newcommand{\bbN}{\mathbb N}
\renewcommand{\div}{{\rm div}}
\newcommand{\curl}{{\rm curl}}
\newcommand{\wto}{\rightharpoonup}
\newcommand{\wsto}{\overset{*}{\wto}}
\newcommand{\BP}{{\large {\bf P}}}
\newcommand{\BQ}{{\large {\bf Q}}}
\newcommand{\BI}{{\large {\bf I}}}
\newcommand{\BJ}{{\large {\bf J}}}
\newcommand{\cL}{{\mathcal L}}
\begin{document}

\title{Computation of measure-valued solutions for the incompressible Euler equations}
\author{Samuel Lanthaler \thanks{
 Department of Mathematics, ETH Z\"urich,  Z\"urich -8092,
  Switzerland}, \, Siddhartha Mishra \thanks{
Seminar for Applied Mathematics (SAM)  
Department of Mathematics, ETH Z\"urich, 
HG G 57.2, Z\"urich -8092,
  Switzerland (and) 
Center of Mathematics for Applications (CMA),University of Oslo, 
 P.O.Box -1053, Blindern, Oslo-0316, Norway} {}\thanks{S.M. was supported in part by ERC STG. N 306279, SPARCCLE.}} 
\date{\today}

\maketitle

\begin{abstract}
We combine the spectral (viscosity) method and ensemble averaging to propose an algorithm that computes admissible measure valued solutions of the incompressible Euler equations. The resulting approximate young measures are proved to converge (with increasing numerical resolution) to a measure valued solution. We present numerical experiments demonstrating the robustness and efficiency of the proposed algorithm, as well as the appropriateness of measure valued solutions as a solution framework for the Euler equations. Furthermore, we report an extensive computational study of the two dimensional vortex sheet, which indicates that the computed measure valued solution is non-atomic and implies possible non-uniqueness of weak solutions constructed by Delort. 
\end{abstract}

\section{Introduction}
Many interesting fluid flows are characterized by very low Mach numbers and very high Reynolds numbers \cite{CM1}. It is customary to model these flows by the incompressible Euler equations:
\begin{gather}\label{EQ}
\left\{
	\begin{aligned}
		\partial_t v + \div(v\otimes v) + \nabla p &= 0, \\
		\div(v) &= 0.
	\end{aligned}
	\right.
\end{gather}
Here, the velocity field is denoted by $v$. The pressure $p$ acts as a Lagrange multiplier to impose the divergence constraint and can be eliminated by projecting \eqref{EQ} to divergence free velocity fields \cite{bertozzi}. The system of equations is augmented with initial and boundary conditions. We will only consider the case of periodic boundary conditions in this paper.
\subsection{Theoretical results} 
Although short time existence and uniqueness results for smooth solutions of the incompressible Euler equations are classical \cite{bertozzi}, there are no rigorous \emph{global} wellposedness results for admissible (finite kinetic energy) weak solutions of \eqref{EQ} in three space dimensions. In fact, DeLellis and Szekelyhidi \cite{CDL1,CDL2} were recently able to construct infinitely many admissible weak solutions for the 3D Euler equations. See also \cite{Sch1,Sn1,CDL3} for related non-uniqueness results. 

A major obstacle for the development of a well-posedness theory in three space dimensions lies in the behavior of the vorticity $\eta := {\rm curl} v$, which satisfies (formally),
\begin{equation}
\label{eq:vor}
\eta_t + (v\cdot \nabla) \eta = -(\eta\cdot \nabla) v.
\end{equation}
The right hand side of the vorticity equation \eqref{eq:vor} represents \emph{vortex stretching} and may lead to an unbounded growth of vorticity if the velocity is not smooth. 

The existence and uniqueness theory in two space dimensions is much better developed \cite{CM1,bertozzi}. This is largely due to the fact that the vorticity is a scalar and satisfies \eqref{eq:vor} without the vortex stretching term (right-hand-side). Hence, the vorticity stays bounded (in $L^{\infty}$) as long as we consider initial velocity fields with bounded vorticities. 

However, many interesting flows do not start with initially bounded vorticities. A prototypical example is provided by vortex sheets \cite{bertozzi}, which arise frequently in shear flows. Here, the velocity field is only piecewise smooth, with a discontinuity across a one-dimensional interface. Hence, the resulting vorticity is merely a bounded measure and the wellposedness results of \cite{CM1,bertozzi} are not valid.

The first proof of existence of such vortex sheets was provided in a celebrated paper of Delort \cite{Del1}. The author showed that if the initial data is a bounded non-negative measure (i.e $\in \mathcal{BM}_+$) and lies in $H^{-1}$, then one can construct an weak solution of \eqref{EQ} with the resulting vorticity in $H^{-1}\cap \mathcal{BM}_+$. The question of uniqueness of such solutions remains open.

More recently, Szekelyhidi \cite{Sz1} was able to construct infinitely many admissible weak solutions of \eqref{EQ} with vortex sheet initial data i.e in $H^{-1}\cap \mathcal{BM}_+$. However, the resulting solutions are highly oscillatory and it is unclear if they belong to the Delort class \cite{Sz1}.

\subsection{Numerical methods}
A large variety of numerical methods have been developed to discretize the incompressible Euler equations. Spectral methods, based on an approximation in Fourier space, are widely employed, particularly in the simulation of flows in periodic domains  \cite{Oz1}. Adding spectrally small numerical diffusion results in the spectral viscosity method \cite{Tad1,BT1} that can approximate sharp gradients robustly.

Finite difference, element and volume projection methods {\cite{Cho1,Cho2} are predictor-corrector methods that are very popular in the approximation of flows in domains with boundaries. In this method, a hyperbolic advection step is corrected with an elliptic solve to impose the divergence constraint. Vortex methods, based on discretizing a Lagrangian version of the vorticity equation \eqref{eq:vor} are frequently used to discretize the Euler equations, particularly in two space-dimensions \cite{Cho3,Kras1,Kras2,SL1,Maj1,bertozzi}. 

Although the aforementioned methods are extensively used, rigorous convergence results for them are only available when the underlying solution is smooth \cite{Cho2,Maj1,BT1}. Hence, it is unclear if these methods will converge (with increasing numerical resolution) to an appropriate solution of the Euler equations \eqref{EQ}, either in the three dimensional case or for two-dimensional flows with rough initial data. 
\subsection{Two contrasting numerical examples}
Given the lack of rigorous convergence results, we resort to empirically studying the convergence of numerical methods for two-dimensional flows. To this end, we use a spectral method (see section \ref{sec:spec} for details of the method) to approximate the Euler equations \eqref{EQ} in the two-dimensional periodic box $[0,2\pi]^2$.

First, we consider a perturbed rotating vortex patch \eqref{eq:vp1} as initial data and present the computed vorticity in figure \ref{fig:1}. The figure clearly shows convergence of the vorticity as the number of Fourier modes is increased. This convergence is further verified in figure \ref{fig:2} (left) where we plot the difference in $L^2$ of approximate velocity fields at successive resolutions \eqref{eq:diff}. The figure show that this difference decreases as the resolution is increased and indicates convergence. Furthermore, we also infer stability of the computed solutions, with respect to perturbations of initial data, from figure \ref{fig:2} (right), where we have plotted the $L^2$ difference in computed solutions for successively smaller values of the perturbation parameter.

Next, we consider a perturbed version of the flat vortex sheet, specified by the initial data \eqref{eq:vs} and visualized in figure \ref{initdata}. We visualize the resulting flow by presenting a passive tracer (advected by the computed velocity field) in figure \ref{fig:5}. We observe that the initial perturbation is magnified by Kelvin-Helmholtz instabilities and the tracer seems to be mixed at smaller and smaller scales as the numerical resolution is increased, indicating a possible lack of convergence of the numerical method. This is further verified in figure \ref{fig:6} (left) where we plot the $L^2$ differences at successive resolutions \eqref{eq:diff}. In complete contrast to the rotating vortex patch, the plot clearly shows that the approximations do not even form a Cauchy sequence, let alone converge. Furthermore, the lack of stability of this flow with respect to perturbations is seen from figure \ref{fig:6} (right). In particular, small scale features continue to persist and proliferate further when even the perturbation amplitude is reduced.

Summarizing, the above numerical experiment strongly suggests that numerical approximations of flows, corresponding to non-smooth initial data in two dimensions, may not necessarily converge to a weak solution, at least for realistic numerical resolutions. Although the above results are obtained with a spectral method, a finite difference projection method yielded exactly the same behavior, see the forthcoming paper \cite{LeoSid1}. We strongly suspect that all available numerical methods will not be convergent for this numerical example.
\subsection{Aims and scope of the current paper}
This observed lack of convergence of numerical methods, approximating vortex sheets, was largely on account of the appearance of structures at finer and finer scales as the numerical resolution was increased. This phenomenon was also noticed in the case of the compressible Euler equations in a recent paper \cite{FKMT1}. Given the presence of structures at infinitesimally small scales, the authors of \cite{FKMT1} proposed that measure valued solutions are an appropriate framework to study the question of convergence of numerical approximations. We will follow this approach here. 

Measure valued solutions, introduced by DiPerna and Majda in \cite{DM1,DM2}, are \emph{Young measures} i.e, space-time parametrized probability measures that satisfy the incompressible Euler equations in a weak sense. Given that finite kinetic energy ($L^2$) bounds are the only available global a priori estimate for \eqref{EQ}, Di Perna and Majda proposed that both fine scale oscillations as well as concentrations can prevent strong convergence of approximation schemes. Hence, they proposed a notion of \emph{generalized young measures} that encode both effects. In \cite{DM1}, the authors proved existence of measure valued solutions by showing that Leray-Hopf weak solutions of the Navier-Stokes equations converge to a measure valued solution of \eqref{EQ} as the viscosity tends to zero.

However, the task of computing measure valued solutions is extremely challenging as a large number of refinements in resolution have to be made in order to approximate statistics with respect to the measure \cite{FKMT1}. This necessitates the use of ultra-fine grids (in physical or fourier space) and is enormously computationally expensive. An alternative strategy was proposed in \cite{FKMT1} in the context of compressible flows and is based on exploiting the relationship between young measures and random fields. The resulting algorithm generates an ensemble of numerical simulations and approximates statistics by Monte Carlo sampling, thus computing the underlying measure valued solution efficiently.

The \emph{first aim} of the current paper is to compute measure valued solutions of the incompressible Euler equations \eqref{EQ} efficiently. To this end, we will extend the ensemble based algorithm of \cite{FKMT1} to the incompressible Euler equations. In contrast with \cite{FKMT1}, where high-resolution finite difference (volume) methods were employed, we will use a spectral (viscosity) method and show that the resulting approximations  
 \emph{converge} (upto a subsequence) to an admissible measure valued solution. 

The \emph{second aim} of the current paper is to utilize the proposed algorithm in order to perform an extensive numerical case study of the flat vortex sheet, In particular, we will demonstrate that the proposed algorithm will converge to a \emph{stable} measure valued solution that is \emph{non-atomic}. Consequently, we show that this non-atomicity suggests \emph{non-uniqueness} of weak solutions of the Euler equations in the class considered by Delort in \cite{Del1}

The rest of the paper is organized as follows -- in section \ref{sec:mvs}, we recall the notion of admissible measure valued solutions of the incompressible Euler equations. The spectral method and ensemble averaging based numerical algorithm is presented in section \ref{sec:spec} and numerical experiments are presented in section \ref{sec:numex}. An extensive case study of the flat vortex sheet is described in section \ref{sec:vs}.
\section{Measure valued solutions}
\label{sec:mvs}
We start by recapitulating the definition of admissible measure valued solutions in the sense of DiPerna and Majda \cite{DM1}. In \cite{T1}, Tartar described possible fine scale oscillations of a sequence of functions, bounded uniformly in $L^{\infty}$, in terms of a young measure. However, given that only $L^2$ bounds are available for the solutions of \eqref{EQ}, we follow \cite{DM1,AB1,BDS1,FSID3} and introduce \emph{generalized young measures}, that account for both fine scale oscillations as well as concentration in the (composite) weak limit. 

\begin{definition}
\label{def:gym}
Given a sequence of functions $u_n$ such that $u_n$ is uniformly bounded in $L^2_{\mathrm{loc}}([0,\infty) \times \bbT^n;\bbR^n)$, the triple $(\nu,\lambda,\nu^\infty)$ consisting of
\begin{itemize}
\item the oscillation measure $\nu \in L^\infty([0,+\infty)\times \bbT^n;\mathcal{P}(\bbR^n))$, which is a probability measure on phase space $\bbR^n$ parametrized by $x,t$, and accounts for the persistence of oscillations in the sequence $u_n$,
\item the concentration measure $\lambda = \lambda_t \otimes \dt$, which is a measure on physical space-time $\bbT^n \times [0,+\infty)$ and is singular with respect to Lebesgue measure $\dx \dt$,
\item the concentration-angle measure $\nu^\infty \in L^\infty([0,+\infty)\times \bbT^n;\mathcal{P}(\bbS^{n-1}))$, a probability measure on $\bbS^{n-1}$ parametrized by $x,t$.
\end{itemize}
is termed as the (generalized) Young measure associated with the sequence $u_n$ provided that (after extracting a subsequence, still labeled by $n$), the following holds,
\begin{align}\label{Younglimit}
f(u_n) \dx \dt \wsto \left(\int_{\bbR^n} f \, d\nu_{x,t}\right) \dx \dt + \left( \int_{\bbS^{n-1}} f^\infty \, d\nu^\infty_{x,t}\right) \, \lambda(\dx\dt),
\end{align}
for every continuous function on phase space $f\in C(\bbR^n)$ with continuous $L^2$- recession function 
$$f^\infty(\theta) = \lim_{s\to \infty} s^{-2}f(s\theta).$$
\end{definition}
The existence of such Young measures has been proved in \cite{AB1}. One can further extend the notion of (generalized) Young measures to represent the
effect of oscillations and concentrations in sequences of Young measures \cite{FSID3}.

We will frequently use the notation $\langle \nu_{x,t}, f\rangle := \int_{\bbR^n} f \, d\nu_{x,t}$ to denote the action of a probability measure $\nu_{x,t}$ on a function $f$, in the following.

\begin{definition}
\label{mvsdef}
A generalized Young measure $(\nu, \lambda, \nu^\infty)$ is a measure-valued solution (MVS) of the incompressible Euler equations \eqref{EQ} with initial data $v_0$, if it satisfies 
\begin{align}
\begin{split}
\int_0^\infty & \int_{\bbT^n} \langle \nu_{x,t}, \xi\rangle \partial_t \phi + \langle\nu_{x,t},\xi \otimes \xi \rangle : \nabla \phi \, \dx \dt 
\\ &+ \int_0^\infty \int_{\bbT^{n-1}} \langle\nu^\infty_{x,t},\theta \otimes \theta \rangle : \nabla \phi \; \lambda(\dx \dt) + \int_{\bbT^n} v_0(x) \phi(x, 0) \, \dx  = 0
\end{split}
\label{MVS}
\end{align}
for all $\phi \in C^\infty_c([0,\infty)\times \bbT^n; \bbR^n)$ with $\div(\phi) = 0$, and 
\begin{align*}
\int_0^\infty \int_{\bbT^n} \langle \nu_{x,t}, \xi\rangle \cdot \nabla \psi = 0
\end{align*}
for $\psi \in C^\infty_c([0,\infty)\times \bbT^n)$.
\end{definition}
Note that if the Young measure is atomic i.e, $\nu_{x,t} = \delta_{v(x,t)}$, $\lambda = 0$, then the definition of measure valued solutions reduces to the standard notion of weak solutions of \eqref{EQ}.
\begin{remark}
The above definition assumes that the initial data is an atomic measure. It is rather straightforward to generalize this notion to an initial data that is a non-atomic Young measure $\sigma$ by replacing $v_0$ in \eqref{MVS} with $\langle \sigma_x, \xi \rangle$.  This notion can also serve as a framework for uncertainty quantification (UQ), in the context of uncertain initial data \cite{FKMT1}.
\end{remark}
\subsection{Existence and uniqueness}
As mentioned in the introduction, the existence of a measure valued solution of \eqref{EQ} was shown by DiPerna and Majda in \cite{DM1}. 

We observe that the definition of measure valued solution provides for the dynamic evolution of the mean (first moment) of the Young measure $\nu$ only. The evolution of higher moments cannot be inferred directly from the evolution equation \eqref{MVS}. Hence, the concept of measure valued solutions contains a large scope for non-uniqueness of the resulting solutions. Further admissibility criteria need to be prescribed in order to recover uniqueness.

A natural admissibility criteria (see \cite{BDS1}) stems from the conservation (dissipation) of kinetic energy by the flow. Following \cite{BDS1}, we define
\begin{definition}\label{admissdef}
A MVS $(\nu, \lambda, \nu^\infty)$ with initial data $\sigma$, is called \emph{admissible}, if
\begin{align*}
 \int_{\bbT^n} \langle \nu_{x,t}, \vert \xi \vert^2 \rangle \dx +  \lambda_t(\bbR^{n}) \le \int_{\bbT^n} \langle \sigma_x, \vert \xi \vert^2\rangle \dx
\end{align*}
for almost all $t \in [0,\infty)$.
\end{definition}

Thus, we require that the measure valued solution either conserve or dissipate kinetic energy (in time). Energy dissipative solutions of the Euler equations are widely accepted as playing a key role in turbulence following the seminal work of Onsager \cite{ES}. Recent progress towards the construction of energy dissipative (weak) solutions  in three space dimensions is reported in \cite{DS4,IS1}.

However, the notion of admissibility does not suffice in restoring uniqueness. In fact, Szekelyhidi and Wiedemann \cite{SW1} show that any admissible measure valued solution can be approximated by a sequence of weak solutions. This partly reflects the fact that admissible weak solutions (atomic measure valued solutions) are non-unique. However, the notion of admissibility is enough to obtain the following  weak-strong uniqueness result,
\begin{theorem}\label{weakstronguniqueness}
\cite{BDS1}:
Let $v \in C([0,T];L^2(\bbT^n;\bbR^n))$ be a weak solution of \eqref{EQ} with $\int_0^T \Vert \nabla v + \nabla v ^T \Vert_{L^\infty} \dt < \infty$ and let $(\nu, \lambda, \nu^\infty)$ be an admissible measure-valued solution with atomic initial data $\sigma = \delta_{v(x,0)}$. Then $\nu_{x,t} = \delta_{v(x,t)}$ and $\lambda = 0$, i.e. $v$ is the unique admissible MVS in this situation. 
\end{theorem}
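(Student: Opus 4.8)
The plan is to prove this via a \emph{relative energy} (or weak-strong) argument. I would introduce the functional
\[
\mathcal{E}(t) := \frac{1}{2}\int_{\bbT^n}\big\langle\nu_{x,t}, |\xi - v(x,t)|^2\big\rangle\,\dx + \frac{1}{2}\lambda_t(\bbT^n),
\]
which measures the squared distance of the Young measure $\nu$ from the Dirac mass at $v$, with the concentration defect $\lambda_t$ added in. The goal is to show that $\mathcal{E}$ satisfies a Gronwall inequality with $\mathcal{E}(0)=0$, whence $\mathcal{E}\equiv 0$; since both summands are non-negative this forces $\lambda_t(\bbT^n)=0$ and $\langle\nu_{x,t},|\xi-v|^2\rangle=0$ for a.e.\ $(x,t)$, i.e.\ $\nu_{x,t}=\delta_{v(x,t)}$ and $\lambda=0$, which is the claim.

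Expanding the square, $\mathcal{E}(t)$ equals the total kinetic energy $\frac12\int\langle\nu_{x,t},|\xi|^2\rangle\,\dx + \frac12\lambda_t(\bbT^n)$ of the MVS, minus the cross term $\int_{\bbT^n} v\cdot\langle\nu_{x,t},\xi\rangle\,\dx$, plus $\frac12\int_{\bbT^n}|v|^2\,\dx$. The admissibility inequality of Definition \ref{admissdef} bounds the first group by the initial energy $\frac12\int\langle\sigma_x,|\xi|^2\rangle\,\dx = \frac12\int|v_0|^2\,\dx$, using $\sigma=\delta_{v_0}$; and the regularity hypothesis $\int_0^T\|\nabla v+\nabla v^{T}\|_{L^\infty}\,\dt<\infty$ provides precisely the control on the deformation tensor needed both to justify that $v$ conserves its kinetic energy, $\frac12\int|v(\cdot,t)|^2=\frac12\int|v_0|^2$, and to close the estimate below.

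The heart of the argument is the evolution of the cross term. Here I would use $v$ itself (after mollifying in space-time and inserting a temporal cut-off up to time $t$) as the divergence-free test field $\phi$ in the momentum identity \eqref{MVS}, and simultaneously test the weak equation for $v$ against the mean $\langle\nu_{x,t},\xi\rangle$. Subtracting and carefully matching the quadratic terms, the time derivatives cancel against the transport terms and one is left with a \emph{production} term $\int_{\bbT^n}\langle\nu_{x,t},(\xi-v)\otimes(\xi-v)\rangle:\nabla v\,\dx$ together with its concentration analogue $\int_{\bbT^n}\langle\nu^\infty_{x,t},\theta\otimes\theta\rangle:\nabla v\,\lambda(\dx)$. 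Assembling the three pieces yields a relative-energy inequality in which $\mathcal{E}(t)-\mathcal{E}(0)$ is bounded by the time integral of these two production terms. Since the tensors $(\xi-v)\otimes(\xi-v)$ and $\theta\otimes\theta$ are symmetric, their contraction with $\nabla v$ equals contraction with the symmetric part $\frac12(\nabla v+\nabla v^{T})$, so each production term is controlled in absolute value by $\|\nabla v+\nabla v^{T}\|_{L^\infty}$ times $\langle\nu_{x,s},|\xi-v|^2\rangle$ and $\lambda_s$ respectively (the latter using $|\theta|=1$). This gives
\[
\mathcal{E}(t) \le \mathcal{E}(0) + \int_0^t \|\nabla v+\nabla v^{T}\|_{L^\infty}(s)\,\mathcal{E}(s)\,ds,
\]
and since $\mathcal{E}(0)=0$ (the initial datum is atomic with no concentration) and the weight is integrable in time by assumption, Gronwall's lemma forces $\mathcal{E}(t)=0$ for a.e.\ $t$.

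I expect the main obstacle to be the rigorous derivation of the relative-energy inequality, rather than the Gronwall step. Plugging the merely deformation-bounded field $v$ into the weak formulation \eqref{MVS} is not immediate: one must mollify $v$, control the resulting commutator errors as the mollification parameter tends to zero, and verify that the concentration contributions (which are defined only through the recession function and the singular measure $\lambda$) assemble exactly into the symmetric production term above. Establishing the energy conservation of the weak solution $v$ from the symmetric-gradient bound, and making the temporal cut-off argument produce a clean pointwise-in-$t$ inequality with the correct initial value, are the other delicate points.
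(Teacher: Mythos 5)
Your relative-energy argument is essentially the correct and intended proof: the paper itself gives no proof of Theorem \ref{weakstronguniqueness} (it is quoted from \cite{BDS1}), but the functional you introduce is exactly the quantity $F(t)$ that the paper later borrows from the proof of \cite[Theorem 2]{BDS1} in Section \ref{sec:vs}, together with the same production-term estimate via $\nabla v + \nabla v^T$ and the same Gronwall closure. Your identification of the delicate points (mollifying $v$ to use it as a test function, the concentration contribution through $\nu^\infty$ and $\lambda$, and energy conservation of the strong solution) matches where the technical work in \cite{BDS1} actually lies, so no gap to report.
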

Thus, admissible measure valued solutions of the Euler equations coincide with a classical solution if it exists. The question of uniqueness (stability) of admissible measure valued solutions is further investigated in section \ref{sec:numex}. 

\section{Construction of admissible measure valued solutions}
\label{sec:spec}
From the previous section, we have seen that admissible measure valued solutions of the Euler equations exist (globally in time) and can be realized as a zero viscosity limit of weak solutions of Navier-Stokes equations. Our interest will be in computing measure valued solutions in an efficient manner. To this end, we will adapt the recent ensemble based algorithm of \cite{FKMT1}, for simulating compressible flows, to the current context of incompressible flows. As the algorithm of \cite{FKMT1} combines Monte Carlo sampling in probability space with a robust numerical discretization of the underlying PDEs, we start with a description of our choice of discretization framework for the incompressible Euler equations.
\subsection{Spectral (viscosity) methods}
Spectral methods approximate the Euler equations \eqref{EQ} in Fourier space \cite{Oz1}. Let $\bbT^n$ denote the n-dimensional torus. Let $(v,p)$ be solutions 
of the Euler equation \eqref{EQ} with periodic boundary conditions, then it satisfies,
\begin{gather}\label{Euler}
\left\{
	\begin{aligned}
		\partial_t v + v\cdot \nabla v + \nabla p &= 0, \\
		\div(v) &= 0.
	\end{aligned}
	\right.
\end{gather}
defined on $\bbT^n \times \bbR_{+}$. 

Consider the spatial Fourier expansion $v(x,t) = \sum_k \widehat v_k(t) e^{ikx}$ with coefficients given by
\begin{align*}
\widehat v_k(t) &= \frac{1}{(2\pi)^n}\int_{\bbT^n} v(x,t) \, e^{-ikx} \, dx.
\end{align*}
If $v$ is a solution of \eqref{Euler}, the above expression yields
\begin{align*}
\frac{d}{dt} \widehat v_k
&= \frac{1}{(2\pi)^n} \int_{\bbT^n} \partial_t v \, e^{-ikx} \dx \\
&= -\frac{1}{(2\pi)^n} \int_{\bbT^n} \left(v\cdot \nabla v + \nabla p\right)e^{-ikx} \dx \\
&= -\frac{1}{(2\pi)^n} \int_{\bbT^n} \sum_{\substack{\ell, m} }  (\widehat v_\ell \cdot im)\widehat v_m e^{i(\ell + m - k)x} \\ 
&\quad  -\frac{1}{(2\pi)^n} \int_{\bbT^n}  \sum_{\ell} \widehat p_\ell i\ell e^{i(\ell-k)x} \dx \\
&= (-i) \sum_{\substack{\ell, m \\ \ell + m -k = 0} } (\widehat v_\ell \cdot m)\widehat v_m -i \widehat p_k k.
\end{align*}
We note that $\div(v) = i \sum_k (\widehat v_k\cdot k) e^{ikx}=0$ is equivalent to $\widehat v_k \perp k$ for all $k$. Using $m = k-\ell$ and $\widehat v_\ell \perp \ell$ for all terms in the summation, we can thus rewrite the last equation in the form
\begin{equation} \label{EQF}
\frac{d}{dt} \widehat v_k= (-i) \sum_{\substack{\ell,m \\ \ell + m -k = 0} } (\widehat v_\ell \cdot k)\widehat v_m -i \widehat p_k k.
\end{equation}
This is the Fourier space version of the Euler equations \eqref{EQ}.
It becomes evident that the pressure term $-i \widehat p_k k$, which is collinear to $k$, serves as the orthogonal $L^2$ projection of the non-linear term $$(-i) \sum_{\substack{\ell,m \\ \ell + m -k = 0} } (\widehat v_\ell \cdot k)\widehat v_m$$ to the orthogonal complement of $k$, thus keeping $v$ divergence-free. \\

For the coefficient $\widehat v_k$ with $k=0$, equation \eqref{EQF} yields $\frac{d}{dt} \widehat v_0 = 0$. This corresponds to \emph{conservation of momentum}. Using Galilean invariance of the Euler equations, it implies that we can without loss of generality assume that $\widehat v_0 =  \frac{1}{(2\pi)^n}\int_{\bbT^n} v \, dx = 0$, in the following. \\

\subsubsection{Semi-discretization in Fourier space}

To obtain a discretized approximation to system \eqref{EQF}, we restrict our attention to only the Fourier modes below some threshold $N$. We thus consider divergence-free fields of the form $v(x,t) = \sum_{|k|\le N} \widehat v_k(t) \, e^{ikx}$, and we have to project the non-linear term to this space. We denote the corresponding projection operator by $\BP_N$. The projection operator is a combination of both Fourier truncation and projection to the space of divergence-free fields. More explicitely, $\BP_N$ is given by
\[\BP_N\left( \sum_{k\in \bbZ^2} \widehat w_k e^{ikx} \right)= \sum_{|k|\le N} \left(\widehat w_k- \frac{\widehat w_k \cdot k}{|k|^2} k\right)e^{ikx},\]
yielding a divergence-free vector field with Fourier modes $|k| \le N$. 
We can also add a small amount of numerical viscosity to ensure stability of the resulting scheme. 

This idea results in the following scheme: For given initial data $v_0(x)$, we obtain an approximate solution $v_N(x,t) \approx v(x,t)$ by solving the finite-dimensional problem 
\begin{gather}\label{spectralNS}
\left\{
	\begin{aligned} 
		\partial_t v_N + \BP_N\left(v_N\cdot \nabla v_N\right) &= \theta \Delta v_N, \\
		v_N(x,0) = \BP_Nv_0(x).
	\end{aligned}
	\right.
\end{gather}
In this scheme, the small number $\theta = \theta(N) > 0$ depends on $N$ and $\theta \to 0$ as $N\to \infty$. 

A refined version of this basic scheme was introduced by Tadmor \cite{Tad1}. In that version, we choose a small number $\varepsilon  > 0$ and an integer $m \le N$. The integer $m$ serves as a threshold between small and large Fourier modes. We apply a viscous regularization only to the large Fourier modes. With a judicious choice of $\varepsilon  = \varepsilon (N)$, $m=m(N)$, the resulting method can be shown to be spectrally accurate \cite{spectralerrorestimate}, \cite{BT1}. We obtain the corresponding approximate system
\begin{gather}\label{spEQ}
\left\{
	\begin{aligned} 
		\partial_t v_N + \BP_N\left(v_N\cdot \nabla v_N\right) &= \varepsilon  \div\left( \BQ_N \nabla v_N\right), \\
		v_N(x,0) = \BP_Nv_0(x),
	\end{aligned}
	\right.
\end{gather}
where $\BQ_N = \BI-\BP_m$, denotes the projection onto the higher modes.
System \eqref{spEQ} includes \eqref{spectralNS} for the special choice $m = 0$, $\varepsilon  = \theta$.

The resulting (semi-discrete) spectral method  \eqref{spEQ} is stable in the following sense,
\begin{lemma} \label{est}
{\em Stability:} If $v_N$ is the solution of the semi-discrete system \eqref{spEQ}, then
\begin{align}
\label{enest} 
		 \frac12 \lVert v_N \rVert_{L^2}^2 + \varepsilon \int_0^t \lVert (\BI-\BP_m)\nabla v_N \rVert_{L^2}^2 \dt
		&=  \frac12 \lVert \BP_N v_0 \rVert_{L^2}^2 \le  \frac12 \lVert v_0 \rVert_{L^2}^2.
\end{align}
In particular, we have $ \lVert v_N \rVert_{L^2} \le \lVert v_0 \rVert_{L^2}$, independently of $N, m,  \varepsilon $.
\end{lemma}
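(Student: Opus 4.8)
The plan is to derive the energy identity by testing the evolution equation \eqref{spEQ} against the solution $v_N$ itself in $L^2(\bbT^n)$ and integrating over the torus. Concretely, I would take the $L^2$ inner product of the first equation in \eqref{spEQ} with $v_N$ to obtain
$$\langle \partial_t v_N, v_N\rangle + \langle \BP_N(v_N\cdot\nabla v_N), v_N\rangle = \varepsilon \langle \div(\BQ_N \nabla v_N), v_N\rangle.$$
The first term is immediately $\tfrac12 \tfrac{d}{dt}\lVert v_N\rVert_{L^2}^2$, so the whole argument reduces to showing that the nonlinear term vanishes and that the viscous term produces exactly the claimed dissipation.

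The key step is the vanishing of the nonlinear contribution. First I would record that $v_N$ lies in the range of $\BP_N$ (it is a divergence-free trigonometric polynomial of degree $\le N$) and that $\BP_N$ is an orthogonal projection in $L^2$, hence self-adjoint and idempotent. Therefore $\langle \BP_N(v_N\cdot\nabla v_N), v_N\rangle = \langle v_N\cdot \nabla v_N, \BP_N v_N\rangle = \langle v_N\cdot\nabla v_N, v_N\rangle$. Writing $v_N \cdot \nabla v_N \cdot v_N = v_N \cdot \nabla(\tfrac12 |v_N|^2)$ and integrating by parts on the boundaryless torus, this equals $-\tfrac12 \int_{\bbT^n} (\div v_N)\,|v_N|^2\,\dx = 0$, since $\div v_N = 0$. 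This is the step I expect to be the crux: it hinges on correctly exploiting the self-adjointness of the Leray--Fourier projection $\BP_N$ to move it off the nonlinearity and onto the test function, after which the standard skew-symmetry of the transport term closes it.

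For the viscous term, I would integrate by parts once to get $\varepsilon\langle \div(\BQ_N\nabla v_N), v_N\rangle = -\varepsilon \langle \BQ_N \nabla v_N, \nabla v_N\rangle$, again with no boundary contribution by periodicity. Since $\BQ_N = \BI - \BP_m$ is an orthogonal projection in Fourier space (it retains the modes with $|k|>m$ and annihilates the rest), it is self-adjoint and idempotent, so the cross term drops and $\langle \BQ_N \nabla v_N, \nabla v_N\rangle = \lVert \BQ_N \nabla v_N\rVert_{L^2}^2 = \lVert (\BI - \BP_m)\nabla v_N\rVert_{L^2}^2 \ge 0$. Combining the three computations yields the pointwise-in-time identity $\tfrac12\tfrac{d}{dt}\lVert v_N\rVert_{L^2}^2 = -\varepsilon \lVert(\BI-\BP_m)\nabla v_N\rVert_{L^2}^2$.

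Finally I would integrate this identity in time from $0$ to $t$ and use the initial condition $v_N(\cdot,0)=\BP_N v_0$ to recover the stated equality \eqref{enest}; the closing inequality $\lVert \BP_N v_0\rVert_{L^2}\le \lVert v_0\rVert_{L^2}$ is then immediate from $\BP_N$ being a norm-nonincreasing orthogonal projection. I would also remark that existence of $v_N$ is not an issue here: \eqref{spEQ} is a finite system of ODEs for the finitely many Fourier coefficients, so Picard--Lindel\"of provides local solutions and the energy identity itself furnishes the a priori bound that extends them globally.
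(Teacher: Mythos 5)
Your proposal is correct and follows essentially the same route as the paper's proof: test against $v_N$, use the self-adjointness of $\BP_N$ together with $\BP_N v_N = v_N$ to reduce the nonlinear term to the skew-symmetric transport identity $\int v_N\cdot\nabla(\tfrac12|v_N|^2)\dx = 0$, and use the self-adjointness and idempotence of $\BI-\BP_m$ to identify the dissipation term, then integrate in time. The only difference is your (welcome but inessential) closing remark on global existence of $v_N$ via Picard--Lindel\"of, which the paper takes for granted.
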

\begin{proof}
 Multiplying equation \eqref{spEQ} by $v_N$ and integrating over space, we obtain after an integration by parts, and using also the fact that all boundary terms vanish due to the periodic boundary conditions,
\begin{align*} 
		\frac{d}{dt} \frac12 \int_{\bbT^2} |v_N|^2 \dx 
		&= \int_{\bbT^2} -v_N\cdot \BP_N\left(v_N\cdot \nabla v_N \right)+v_N\cdot \varepsilon  \div \left( (\BI - \BP_m) \nabla v_N\right) \dx \\
		&= \int_{\bbT^2} -\BP_Nv_N\cdot \left(v_N\cdot \nabla v_N \right)-\varepsilon  \nabla v_N:   (\BI - \BP_m) \nabla v_N \dx \\
		&= \int_{\bbT^2} -v_N\cdot \left(v_N\cdot \nabla v_N \right)-\varepsilon  (\BI - \BP_m)  \nabla v_N:   (\BI - \BP_m) \nabla v_N \dx \\
		&= \int_{\bbT^2} -\div\left(\frac 12 |v_N|^2 v_N\right) - \varepsilon  \vert(\BI - \BP_m) \nabla v_N \vert^2  \dx \\
		&= - \varepsilon  \int_{\bbT^n} \vert (\BI - \BP_m) \nabla v_N \vert^2 \dx,
\end{align*}
i.e. we have
\begin{align}
\frac{d}{dt} \int_{\bbT^n} \frac12 \vert v_N \vert^2 \dx + \varepsilon  \int_{\bbT^n} \vert (\BI - \BP_m) \nabla v_N \vert^2 \dx &= 0.
\end{align}
Integrating in time from $0$ to $t$ yields \eqref{enest}.
\end{proof}
Furthermore, the spectral (viscosity) method also satisfies the following consistency property,
\begin{lemma}
\label{lem:cons}
{\em Consistency:} Let $v_N$ be an approximate solution of the Euler equations computed by the spectral (viscosity) method \eqref{spEQ}, then for any domain $D \subset   \bbT^n \times \bbR^+$ and all divergence-free test functions $\phi$ in $C_0^\infty(D)$, 
\begin{equation}
\label{eq:cons1}
\lim_{N \to \infty} \int _D \partial_t\phi \cdot v_N + \nabla \phi:v_N \otimes v_N \dx \dt = 0.
\end{equation}
\end{lemma}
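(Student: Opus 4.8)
The plan is to test the semi-discrete scheme \eqref{spEQ} against the divergence-free function $\phi$, integrate over $D$, and show that the two resulting error terms vanish as $N\to\infty$. First I would multiply \eqref{spEQ} by $\phi$, integrate over $D$, and integrate the time derivative by parts; since $\phi\in C_0^\infty(D)$ is compactly supported, all boundary terms vanish and I obtain
\[
-\int_D v_N\cdot\partial_t\phi \dx\dt + \int_D \BP_N(v_N\cdot\nabla v_N)\cdot\phi \dx\dt = \varepsilon\int_D \div(\BQ_N\nabla v_N)\cdot\phi \dx\dt.
\]
Two structural facts drive the argument. First, $\BP_N$ is the $L^2$-orthogonal projection onto divergence-free band-limited fields, hence self-adjoint. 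Second, the scheme keeps $v_N$ divergence-free: in Fourier space each mode satisfies $\widehat v_k\perp k$ initially, the projected nonlinearity $\BP_N(\cdots)$ is divergence-free by construction, and the viscous term reduces to $-\varepsilon\sum_{m<|k|\le N}|k|^2\widehat v_k$, which preserves both $\widehat v_k\perp k$ and the cutoff $|k|\le N$. Thus $\div v_N = 0$ for all $t$.

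Using self-adjointness I would move $\BP_N$ onto the test function, $\int_D\BP_N(v_N\cdot\nabla v_N)\cdot\phi = \int_D (v_N\cdot\nabla v_N)\cdot\BP_N\phi$, and since $\div v_N=0$ write $v_N\cdot\nabla v_N = \div(v_N\otimes v_N)$ and integrate by parts once more to get $-\int_D (v_N\otimes v_N):\nabla(\BP_N\phi)$. Because $\phi$ is already divergence-free, $\BP_N\phi$ is simply its Fourier truncation, and for smooth $\phi$ the truncations converge in $C^1$, so $\|\nabla\BP_N\phi - \nabla\phi\|_{L^\infty(D)}\to 0$. Together with the uniform bound $\|v_N\otimes v_N\|_{L^1(D)} \le \int_0^T\|v_N\|_{L^2}^2\dt \le T\|v_0\|_{L^2}^2$ coming from Lemma \ref{est} (with $T$ chosen so that $\phi$ is supported in $\bbT^n\times[0,T]$), the difference $\int_D (v_N\otimes v_N):(\nabla\BP_N\phi - \nabla\phi)$ tends to $0$; this lets me replace $\nabla(\BP_N\phi)$ by $\nabla\phi$ in the limit.

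The step I expect to be the main obstacle --- and the only place the spectral viscosity really enters --- is showing the right-hand side vanishes. Integrating by parts gives $\varepsilon\int_D\div(\BQ_N\nabla v_N)\cdot\phi = -\varepsilon\int_D (\BI-\BP_m)\nabla v_N:\nabla\phi$, which I would bound by Cauchy--Schwarz in space-time as
\[
\varepsilon\left(\int_D |(\BI-\BP_m)\nabla v_N|^2\dx\dt\right)^{1/2}\|\nabla\phi\|_{L^2(D)} = \sqrt\varepsilon\left(\varepsilon\int_D|(\BI-\BP_m)\nabla v_N|^2\dx\dt\right)^{1/2}\|\nabla\phi\|_{L^2(D)}.
\]
The stability estimate \eqref{enest} controls precisely the weighted dissipation $\varepsilon\int_0^\infty\|(\BI-\BP_m)\nabla v_N\|_{L^2}^2\dt \le \tfrac12\|v_0\|_{L^2}^2$, so the parenthetical factor is uniformly bounded and the whole expression is $O(\sqrt\varepsilon)$. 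Since $\varepsilon=\varepsilon(N)\to 0$, this term vanishes. Collecting the three estimates yields $\int_D v_N\cdot\partial_t\phi + (v_N\otimes v_N):\nabla\phi\to 0$, which is \eqref{eq:cons1}. The delicate point worth double-checking is that the $L^2$-in-time bound on $v_N$ suffices on the time-truncated support of $\phi$, and that it is the extra factor $\sqrt\varepsilon$ --- not merely boundedness of the dissipation --- that forces the viscous contribution to zero.
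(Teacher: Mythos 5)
Your argument is correct and reaches the same conclusion, but both key estimates are carried out differently from the paper. For the nonlinear term, the paper first rewrites the scheme with an explicit residual $\div\bigl((\BI-\BP_N)(v_N\otimes v_N)\bigr)$ and then controls it via the Bernstein-type bound $\Vert v_N\Vert_{L^\infty_x}\le C_nN^{n/2}\Vert v_0\Vert_{L^2}$ paired against the spectral (super-polynomial) decay of $\Vert(\BI-\BP_N)\phi\Vert_{H^{n/2+1}}$, so the argument has to beat an $N^{n/2}$ growth factor. You instead move the self-adjoint projection onto the test function and pair $v_N\otimes v_N\in L^1$ (uniformly bounded by the energy estimate) against $\nabla(\BI-\BP_N)\phi\to 0$ in $L^\infty$; this avoids the $N^{n/2}$ factor altogether and only needs $C^1$-convergence of the truncations, which is a cleaner and slightly more robust estimate. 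For the viscous term the roles are reversed: the paper integrates by parts twice, putting both derivatives on $\phi$, and obtains a bound $\varepsilon\,\Vert v_0\Vert_{L^2}\int_0^T\Vert(\BI-\BP_m)\phi\Vert_{H^2}\dt$ that vanishes if \emph{either} $\varepsilon\to 0$ \emph{or} $m\to\infty$; your single integration by parts plus Cauchy--Schwarz against the dissipation budget in \eqref{enest} gives the classical $O(\sqrt{\varepsilon})$ vanishing-viscosity bound, which is correct under the stated scaling $\varepsilon=\varepsilon(N)\to 0$ but does not cover the regime of fixed $\varepsilon$ with only $m\to\infty$ that the paper's estimate also handles. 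Your preliminary observations (self-adjointness of $\BP_N$, preservation of $\div v_N=0$ by the scheme, and the fact that $\BP_N\phi$ reduces to Fourier truncation on divergence-free $\phi$, which is also how the pressure disappears) are all sound and are used implicitly in the paper as well.
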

\begin{proof}
Rewriting \eqref{spEQ} and using that $\div(v_N)=0$, we have
$$\partial_t v_N + \div\left(v_N\otimes v_N\right)+ \nabla p_N = \div\left((\BI-\BP_N)(v_N\otimes v_N)\right) + \varepsilon \, \div\left( (\BI-\BP_m)\nabla v_N\right).$$
 for any divergence-free test function $\phi\in C_0^\infty(D)$  we obtain after an integration by parts
\begin{align*}
\int_D \partial_t\phi\cdot v_N + \nabla \phi:v_N \otimes v_N \dx \dt 
&= - \int_D \phi \cdot \left(\partial_tv_N + \div(v_N \otimes v_N)\right) \dx \dt \\
&= - \int_D \phi\cdot \div\left( (\BI-\BP_N)\left(v_N\otimes v_N\right)\right) \dx\dt\\
&\qquad  -\varepsilon\int_D \phi \cdot  \div\left( (\BI-\BP_m)\nabla v_N\right) \dx \dt \\
&= (A) - (B).
\end{align*}
Now
\begin{align*}
(A) &= - \int_D \phi\cdot \div\left( (\BI-\BP_N)\left(v_N\otimes v_N\right)\right) \dx\dt  \\
&= \int_D \nabla\phi : \left( (\BI-\BP_N)\left(v_N\otimes v_N\right)\right) \dx\dt \\
&= \int_D \nabla (\BI-\BP_N)\phi : \left(v_N\otimes v_N\right) \dx\dt\\
\end{align*}
We notice that for a constant $C_n$ depending on the space dimension $n$ only:
\begin{align*}
\Vert v_N \Vert_{L^\infty_x} 
&\le \sum_{\vert k\vert \le N} \vert \widehat {(v_N)}_k\vert \le C_n N^{n/2} \Big(\sum_{\vert k\vert \le N} \vert \widehat {(v_N)}_k\vert^2 \Big)^{1/2} \\
&= C_n N^{n/2} \Vert v_N \Vert_{L^2_x} \le C_n N^{n/2} \Vert v_0 \Vert_{L^2}.
\end{align*}
Thus we can continue to estimate the term $(A)$ as
\begin{align*}
\vert (A) \vert &\le  \int_0^T \Vert v_N \Vert_{L^\infty_x} \Vert v_N \Vert_{L^2_x} \Vert \nabla (\BI-\BP_N)\phi \Vert_{L^2_x}  \dt \\
&\le C_n N^{n/2} \Vert v_0 \Vert^2_{L^2} \int_0^T \Vert \nabla (\BI-\BP_N)\phi \Vert_{L^2_x} \dt \\
&\le C_n \Vert v_0 \Vert^2_{L^2} \int_0^T \Vert (\BI-\BP_N)\phi \Vert_{H^{n/2+1}_x} \dt
\end{align*}
Since $\phi$ is smooth, it follows that $ \int_0^T \Vert (\BI-\BP_N)\phi \Vert_{H^{n/2+1}_x} \dt \to 0$ as $N\to \infty$. Hence, we obtain that $(A)\to 0$. 

The term $(B)$ is handled similarly. We have
\begin{align*}
(B) &= \varepsilon\int_D \phi \cdot  \div\left( (\BI-\BP_m)\nabla v_N\right) \dx \dt \\
&= \varepsilon\int_D \div\left( (\BI-\BP_m)\nabla\phi \right) \cdot  v_N \dx \dt\\
&= \varepsilon\int_D \left( (\BI-\BP_m)\Delta\phi \right) \cdot  v_N \dx \dt.
\end{align*}
This yields
$$|(B)| \le \varepsilon  \int_0^T \Vert (\BI-\BP_m)\phi \Vert_{H^2_x} \Vert v_N\Vert_{L^2}\dt \le \varepsilon   \Vert v_0\Vert_{L^2} \int_0^T \Vert (\BI-\BP_m)\phi \Vert_{H^2_x} \dt,$$
and again we see that the right hand side converges to $0$, if either $\varepsilon  \to 0$ or $m\to \infty$.
\end{proof}
\begin{remark}
The arguments used in the derivation of the estimates for Lemma \ref{lem:cons} also yield uniform Lipschitz continuity 
$$v_N \in \mathrm{Lip}([0,T];H^{-n/2-1}(\bbT^n;\bbR^n))$$
 for $n\ge 2$. Indeed, we have 
\begin{equation}\label{timecont}
\partial_t v_N = -\BP_N\div(v_N\otimes v_N)+\varepsilon  \div((\BI - \BP_m)\nabla v_N).
\end{equation}
An $H^{-n/2-1}$-bound for the first term is obtained in the estimate for (A), while the estimate for (B) implies an upper bound on the $H^{-2}$-norm. From the inclusion $H^{-2} \subset H^{-n/2-1}$, we obtain an upper bound in $H^{-n/2-1}$ for the right hand side of \eqref{timecont}. Upon integration in time, the claimed Lipschitz continuity of $v_N$ in $H^{-n/2-1}$ as a function of $t$ follows. 
\end{remark}
\subsection{An ensemble based algorithm to compute admissible measure valued solutions}
Next, we will combine the spectral (viscosity) method with the ensemble based algorithm of the recent paper \cite{FKMT1} in order to compute admissible measure valued solutions of the incompressible Euler equations. First, we assume that the initial velocity field is an arbitrary Young measure i.e, $v(x,0) = \sigma_x$, which satisfies the divergence constraint in a weak sense. Then, an algorithm for computing measure valued solutions is specified with the following steps:
\begin{algorithm}\label{alg:approxmv}~
\begin{description}
\item[\textbf{Step 1:}] Let $v_0: \Omega \mapsto L^{2} (\bbT^n;\bbR^n)$ be a random field on a probability space $(\Omega,\mathcal F,P)$ such that the initial Young measure $\sigma$ is the law of the random field $v_0$. The existence of such a random field can be shown analogous to \cite{FKMT1}, proposition A.3.
\item[\textbf{Step 2:}] We evolve the initial random field $v_0$ by applying the spectral (viscosity) scheme \eqref{spEQ} for every $\omega \in \Omega$ to obtain an approximation $v_N(\omega)$, to the solution random field $v(\omega)$, corresponding to the initial random field $v_0(\omega)$.
\item[\textbf{Step 3:}] Define the approximate measure-valued solution $\nu^N$ as the law of $v_N$.
\end{description}
\end{algorithm}
Then from proposition A. 3. 1 of the recent paper \cite{FKMT1}, $\nu^N$ is a Young measure. Next, we show that these approximate Young measures will converge in the appropriate sense to an admissible measure valued solution of the incompressible Euler equations \eqref{EQ}.
\begin{theorem}
\label{thm:conv}
Let the (kinetic) energy of the initial Young measure $\sigma$ be finite i.e,
$$
\int_{\bbT^n} \langle \sigma_x, |\xi|^2 \rangle dx \leq C < \infty,
$$
then the approximate Young measure $\nu^N_{x,t}$, generated by the algorithm \ref{alg:approxmv} converges (upto a subsequence) to an (admissible) measure valued solution $(\nu, \lambda, \nu^{\infty})$ of the incompressible Euler equations \eqref{EQ}.
\end{theorem}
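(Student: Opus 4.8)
The plan is to pass to the limit $N\to\infty$ in the ensemble of spectral approximations, combining the deterministic stability and consistency estimates of Lemmas~\ref{est} and~\ref{lem:cons} with the probabilistic structure of the algorithm, and using the compactness theory of generalized Young measures generated by \emph{sequences} of Young measures \cite{FSID3,AB1}.

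First I would record a uniform second-moment bound. Since $\nu^N_{x,t}$ is by construction the law of $\omega\mapsto v_N(\omega,x,t)$, Fubini's theorem (whose applicability rests on the measurability established as in \cite{FKMT1}) together with the stability estimate \eqref{enest} gives
\begin{equation*}
\int_{\bbT^n}\langle\nu^N_{x,t},|\xi|^2\rangle\dx=\mathbb E\big[\lVert v_N(\cdot,t)\rVert_{L^2}^2\big]\le\mathbb E\big[\lVert v_0\rVert_{L^2}^2\big]=\int_{\bbT^n}\langle\sigma_x,|\xi|^2\rangle\dx\le C,
\end{equation*}
uniformly in $N$ and $t$. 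This is precisely the hypothesis needed to invoke the representation/compactness theorem for sequences of Young measures: after extracting a subsequence (not relabelled), there is a generalized Young measure $(\nu,\lambda,\nu^\infty)$ such that, for every $f\in C(\bbR^n)$ with continuous $L^2$-recession function $f^\infty$,
\begin{equation*}
\langle\nu^N_{x,t},f\rangle\dx\dt\;\wsto\;\langle\nu_{x,t},f\rangle\dx\dt+\langle\nu^\infty_{x,t},f^\infty\rangle\,\lambda(\dx\dt).
\end{equation*}

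Next I would derive the momentum identity \eqref{MVS}. The consistency Lemma~\ref{lem:cons} is stated for test functions vanishing near $t=0$; to recover the initial-data term I would repeat its integration by parts keeping the boundary contribution at $t=0$, which for $\phi\in C_c^\infty([0,\infty)\times\bbT^n;\bbR^n)$ with $\div\phi=0$ yields
\begin{equation*}
\int_0^\infty\!\!\int_{\bbT^n}\partial_t\phi\cdot v_N+\nabla\phi:v_N\otimes v_N\dx\dt+\int_{\bbT^n}\phi(x,0)\cdot v_N(x,0)\dx=(A)-(B),
\end{equation*}
with $(A),(B)$ the same terms as in the proof of Lemma~\ref{lem:cons}, bounded pathwise by $C_n\lVert v_0\rVert_{L^2}^2$ (resp.\ $\varepsilon\lVert v_0\rVert_{L^2}$) times deterministic quantities tending to $0$. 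Taking expectations of these explicit bounds, and using $\mathbb E[\lVert v_0\rVert_{L^2}^2]\le C$, shows that the expectation of the left-hand side tends to $0$. Interchanging $\mathbb E$ with the space-time integrals (Fubini) and writing $\mathbb E[v_N]=\langle\nu^N_{x,t},\xi\rangle$, $\mathbb E[v_N\otimes v_N]=\langle\nu^N_{x,t},\xi\otimes\xi\rangle$ and $\mathbb E[v_N(x,0)]=\BP_N\langle\sigma_x,\xi\rangle\to\langle\sigma_x,\xi\rangle$ in $L^2$, I then pass to the limit along the subsequence. The linear integrand $\xi$ has recession function $0$ and so contributes no concentration term, while $\xi\otimes\xi$ has recession function $\theta\otimes\theta$; these are exactly the terms appearing in \eqref{MVS}, with the initial data entering through $\langle\sigma_x,\xi\rangle$ as in the remark after Definition~\ref{mvsdef}. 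The incompressibility constraint is immediate: each realization $v_N(\omega,\cdot,t)$ is divergence-free, hence $\div\langle\nu^N_{x,t},\xi\rangle=\mathbb E[\div v_N]=0$, and this passes to the limit.

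Finally, for admissibility I would test the pathwise energy bound $\tfrac12\lVert v_N(\cdot,t)\rVert_{L^2}^2\le\tfrac12\lVert v_0\rVert_{L^2}^2$ against an arbitrary nonnegative $\chi\in C_c([0,\infty))$, take expectations, and pass to the limit; here $f=|\xi|^2$ has recession function $f^\infty\equiv1$, so $\langle\nu^\infty_{x,t},f^\infty\rangle=1$ and the concentration measure contributes exactly $\lambda_t(\bbT^n)$, yielding $\int_{\bbT^n}\langle\nu_{x,t},|\xi|^2\rangle\dx+\lambda_t(\bbT^n)\le\int_{\bbT^n}\langle\sigma_x,|\xi|^2\rangle\dx$ for a.e.\ $t$, which is Definition~\ref{admissdef}. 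I expect the genuine difficulty to lie in the compactness step: one must produce a \emph{single} generalized Young measure limit that simultaneously accounts for the ensemble average over $\omega$ and for the possible escape of mass to infinity as $N\to\infty$, i.e.\ one needs the representation theorem for Young measures generated by sequences of Young measures rather than of functions \cite{FSID3}. The supporting measure-theoretic points---measurability of $\omega\mapsto v_N(\omega)$ and the Fubini/uniform-integrability justification for replacing moments of $\nu^N$ by ensemble averages---are the other technical item, to be dispatched by the constructions of \cite{FKMT1}.
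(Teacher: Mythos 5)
Your proposal follows essentially the same route as the paper: a uniform second-moment bound from the pathwise energy estimate of Lemma \ref{est}, compactness via the generalized Young measure theorem of \cite{AB1}/\cite{FSID3} applied to the sequence $\nu^N$, passage to the limit in the consistency relation of Lemma \ref{lem:cons} with the test integrands $\xi$ and $\xi\otimes\xi$, and admissibility from the energy inequality. The only differences are that you spell out the two steps the paper leaves implicit (recovering the initial-data term by retaining the $t=0$ boundary contribution, and identifying $\lambda_t(\bbT^n)$ via the recession function of $|\xi|^2$), which is consistent with, and slightly more detailed than, the published argument.
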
 
\begin{proof}
Given the initial bound on the energy and the fact that the energy estimate \eqref{enest} holds for every realization $\omega$, it is straightforward to see that for any $D \subset \bbT^n \times (0,\infty)$, we obtain
$$
\int_D |v_N(\omega)|^2 dx dt \leq C(D), \quad \forall \omega \in \Omega.
$$
Given the fact that $\nu^N$ is the law of the random field $v_N$, the above estimate translates to
$$
\int_D \langle \nu^N_{x,t}, |\xi|^2 \rangle dx dt \leq C(D).
$$
Therefore, by a straightforward modification of the Young measure theorem of \cite{AB1} (see recent paper \cite{FSID3}), we obtain as $N \rightarrow \infty$, that (upto a subsequence), $\nu^N$ converges (narrowly) to a (generalized) Young measure 
$( \nu, \lambda,\nu^\infty)$, such that 
\[\int _{D} \langle \nu^N, g \rangle \phi \dx \dt \to \int _{D} \langle \nu_{x,t}, g\rangle \phi \dx \dt +  \int _{D} \langle \nu^\infty_{x,t}, g^\infty\rangle \phi \, \lambda(\dx \dt),\]
for all $\phi \in C^\infty_0(D)$, 

In particular, we can apply this to the particular choice $g(\xi) = \xi$ with $g^\infty \equiv 0$ and test function $\partial_t\phi$  (for each component) to obtain,
\[ \int _D \partial_t\phi \cdot \langle \nu^N_{x,t}, \xi \rangle \dx \dt \to \int _D \partial_t\phi \cdot  \langle \nu_{x,t}, \xi \rangle \dx \dt.\]
Similarly, with $g(\xi) = \xi \otimes \xi$, $g^\infty(\theta) = \theta \otimes \theta$ and test function $\nabla \phi$, we obtain
\[ \int _D \nabla\phi : \langle \nu^N_{x,t}, \xi \otimes \xi \rangle \dx \dt \to \int _D \nabla\phi :  \langle \nu_{x,t}, \xi\otimes \xi \rangle \dx \dt+  \int _{D} \nabla\phi : \langle \nu^\infty_{x,t}, \theta\otimes \theta \rangle \, \lambda(\dx \dt).\]

Furthermore, the consistency property \eqref{eq:cons1} also holds for every $\omega \in \Omega$, therefore, 
\begin{equation}
\label{eq:cons2}
\lim_{N \to \infty} \int _D \partial_t\phi \cdot v_N(\omega)  + \nabla \phi:v_N (\omega) \otimes v_N (\omega) \dx \dt = 0, \quad \forall \omega \in \Omega.
\end{equation}
In terms of the Young measure $\nu^N$, the above consistency is expressed as,
\begin{equation}
\label{eq:cons3}
\lim_{N \to \infty} \int _D \partial_t\phi \cdot \langle \nu_{x,t}^N,\xi \rangle  + \nabla \phi:\langle \nu^N_{x,t}, \xi \otimes \xi \rangle \dx \dt = 0.
\end{equation}
Thus, we obtain,
\begin{align*}
\int _D &\partial_t\phi \cdot  \langle \nu_{x,t}, g\rangle \dx \dt + \int _D \nabla\phi :  \langle \nu_{x,t}, \xi\otimes \xi \rangle \dx \dt+  \int _{D} \nabla\phi : \langle \nu^\infty_{x,t}, \theta\otimes \theta \rangle \, \lambda(\dx \dt)
\\
&= \lim_{N\to \infty} \int _D \partial_t\phi \cdot \langle \nu_{x,t}^N,\xi \rangle  + \nabla \phi:\langle \nu^N_{x,t}, \xi \otimes \xi \rangle \dx \dt = 0.
\end{align*}
Similarly, we obtain for any $\psi \in C^\infty_0(D)$ that
\[
\int _D \nabla \psi \cdot \langle \nu_{x,t}, \xi \rangle \dx \dt = \lim_{N\to \infty} \int _D \nabla \psi \cdot \langle \nu^N_{x,t}, \xi \rangle \dx \dt = 0,
\]
since $\div(v_N(\omega)) = 0, \; \forall \omega \in \Omega$,
The above proof can be easily adapted to include the initial conditions. Thus, we prove that $\nu^N$ is a measure valued solution of the incompressible Euler equations \eqref{EQ}. Admissibility follows as a straightforward consequence of the energy estimate \eqref{enest}.

\end{proof}
\subsubsection{Approximate measure valued solutions for atomic initial data}
The case of atomic initial data i.e $\sigma = \delta_{v_0}$ with a divergence free velocity field $v_0 \in L^2$ is particularly interesting for applications as it represents the zero uncertainty (in the initial conditions) limit. To compute the measure valued solutions associated with atomic initial data, we use the following algorithm,
\begin{algorithm}
\label{alg1}
Let $(\Omega, \mathcal F, P)$ be a probability space and let $X : \Omega\to L^2(\bbT^n;\bbR^n)$ be a random field satisfying $\|X\|_{L^2(\bbT^n)} \leq 1$ $P$-almost surely.
\begin{description}
\item[\textbf{Step 1:}] Fix a small number $\epsilon >0$.  Perturb $v_0$ by defining $v_0^{\epsilon}(\omega,x) := v_0(x) + \epsilon X(\omega,x)$. Let $\sigma^{\epsilon}$ be the law of $v_0^{\epsilon}$.
\item[\textbf{Step 2:}] For each $\omega\in\Omega$ and $\epsilon> 0$, let $v_N^{\epsilon}(\omega)$ be the solution computed by the spectral method \eqref{spEQ}, corresponding to the initial data $v^{\epsilon}_0$.
\item[\textbf{Step 3:}] Let $\nu^{N,\epsilon}$ be the law of $v_N^{\epsilon}$.
\qed\qedhere
\end{description}
\end{algorithm}

\begin{theorem}
\label{thm:alphaconv}
Let $\{\nu^{N,\epsilon}\}$ be the family of approximate measure valued solutions constructed by algorithm \ref{alg1}. Then there exists a subsequence $(N_n,\epsilon_n) \to 0$  such that
\[
\nu^{N_n,\epsilon_n}\rightarrow (\nu,\lambda,\nu^{\infty}),
\]
with $(\nu, \lambda, \nu^{\infty})$ being an admissible measure valued solution of the incompressible Euler equations \eqref{EQ} with atomic initial data $v_0$.
\end{theorem}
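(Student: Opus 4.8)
The plan is to follow the argument of Theorem~\ref{thm:conv} almost verbatim, with the single initial Young measure $\sigma$ replaced by the $\epsilon$-family $\sigma^\epsilon$, and to add one genuinely new ingredient: a joint limit in the two parameters $(N,\epsilon)$ together with the verification that the degenerating perturbation leaves an \emph{atomic} initial datum $v_0$ in the limit. First I would establish energy bounds that are uniform in all three of $N$, $\epsilon$ and $\omega$. Since $\|X(\omega)\|_{L^2(\bbT^n)} \le 1$ almost surely, we have $\|v_0^\epsilon(\omega)\|_{L^2} \le \|v_0\|_{L^2} + \epsilon$, so for $\epsilon \le 1$ the initial energy is bounded by $(\|v_0\|_{L^2}+1)^2$ independently of $\omega$ and $\epsilon$. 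Applying the stability estimate \eqref{enest} of Lemma~\ref{est} to each realization $v_N^\epsilon(\omega)$ then gives $\|v_N^\epsilon(\omega)\|_{L^2} \le \|v_0^\epsilon(\omega)\|_{L^2} \le \|v_0\|_{L^2}+1$ for all $N$, all $\epsilon\le 1$ and all $\omega$. Passing to the laws $\nu^{N,\epsilon}$, this yields the uniform second-moment bound $\int_D \langle \nu^{N,\epsilon}_{x,t}, |\xi|^2\rangle \dx\dt \le C(D)$ on every bounded $D \subset \bbT^n\times(0,\infty)$.

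With this bound in hand I would fix any sequences $\epsilon_n \to 0$ and $N_n \to \infty$ and invoke the fundamental theorem for generalized Young measures (exactly as in Theorem~\ref{thm:conv}) to extract a subsequence along which $\nu^{N_n,\epsilon_n}$ converges narrowly to a triple $(\nu,\lambda,\nu^\infty)$. The interior weak formulation \eqref{MVS} and the divergence constraint are then recovered precisely as before: the consistency property \eqref{eq:cons1} of Lemma~\ref{lem:cons} holds for each realization $v_N^\epsilon(\omega)$ and hence for the laws $\nu^{N,\epsilon}$, and combining this with the narrow convergence reproduces, term by term, the identity derived in the proof of Theorem~\ref{thm:conv}; the constraint $\div(v_N^\epsilon(\omega))=0$ likewise passes to the limit.

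The one step requiring new attention is the initial datum, where the perturbation and the joint limit interact. Testing $\partial_t v_N^\epsilon$ against a test function $\phi$ that does not vanish at $t=0$ and integrating by parts in time produces the boundary term $\int_{\bbT^n} v_N^\epsilon(x,0)\cdot\phi(x,0)\dx$, whose expectation over the law is $\int_{\bbT^n} \BP_N\bigl(v_0 + \epsilon\,E[X(\cdot,x)]\bigr)\cdot\phi(x,0)\dx$. Because $v_0$ is divergence-free we have $\BP_N v_0 \to v_0$ in $L^2$, while $\|E[X]\|_{L^2}\le 1$ by Jensen's inequality, so this term converges to $\int_{\bbT^n} v_0(x)\cdot\phi(x,0)\dx$ as $N_n\to\infty$ and $\epsilon_n\to 0$; thus the limiting MVS carries exactly the atomic initial datum $v_0$ of \eqref{MVS}. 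Admissibility in the sense of Definition~\ref{admissdef} then follows by taking expectations in \eqref{enest}, which gives $\int_{\bbT^n}\langle \nu^{N,\epsilon}_{x,t},|\xi|^2\rangle\dx \le \int_{\bbT^n} E[|v_0+\epsilon X|^2]\dx \le \int_{\bbT^n}|v_0|^2\dx + 2\epsilon\|v_0\|_{L^2} + \epsilon^2$; letting the right side tend to $\int_{\bbT^n}|v_0|^2\dx$ and using weak lower semicontinuity of the energy under narrow Young-measure convergence on the left yields the admissibility inequality with atomic data $\sigma=\delta_{v_0}$.

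I expect the main obstacle to be bookkeeping rather than conceptual: organizing the simultaneous limit in $(N,\epsilon)$ so that the vanishing initial perturbation and the Fourier truncation degenerate consistently, and in particular confirming that the first moment of the initial Young measure $\sigma^\epsilon$ collapses onto $v_0$ so that the limit is genuinely atomic at $t=0$. Everything else is a transcription of the proof of Theorem~\ref{thm:conv} with $\sigma$ replaced by $\sigma^\epsilon$ and an extra passage to the limit in $\epsilon$.
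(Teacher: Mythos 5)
Your proposal is correct and is essentially the argument the paper has in mind: the paper omits the proof entirely, stating only that it is ``a straightforward extension of the proof of theorem \ref{thm:conv},'' and your write-up is exactly that extension --- uniform-in-$(N,\epsilon,\omega)$ energy bounds from Lemma \ref{est}, extraction of a generalized Young measure along a joint subsequence, consistency via Lemma \ref{lem:cons}, and the extra check that the first moment of $\sigma^\epsilon$ collapses onto the atomic datum $v_0$ as $\epsilon\to 0$. No gaps beyond those the paper itself glosses over (attainment of the initial data and the a.e.-in-$t$ formulation of admissibility).
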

The proof of this theorem is a straightforward extension of the proof of theorem \ref{thm:conv} and we omit it here.
\begin{remark}
There is an analogy between the zero viscosity limit and the zero uncertainty limit considered above. It is commonly argued that in real-world systems, viscosity effects are unavoidable. In order to obtain the correct solution in e.g. the context of conservation laws, a small amount of viscosity should therefore be added to the equations. In situations where viscosity effects are assumed to play a secondary role, the zero viscosity viscosity limit must then be considered and will lead to the correct physical solution.

Along the same lines it can be argued that in real-world systems, uncertainties in the initial data, arising e.g. from uncertainties in measurements, are unavoidable. To account for this fact a small amount of uncertainty should be introduced. If the uncertainties are assumed to be negligible, the correct solution should correspondingly be obtained in the zero uncertainty limit as described in algorithm \ref{alg1} and theorem \ref{thm:alphaconv}.
\end{remark}
\subsection{Computation of space-time averages}
The algorithms \ref{alg:approxmv} and \ref{alg1} compute space-time averages with respect to the measure $\nu^N$,
\begin{equation}
\label{eq:av}
\int_{\bbT^n} \int_{\bbR_+} \varphi(x,t) \langle \nu_{x,t}^N,g\rangle dx dt,
\end{equation}
for smooth test functions $\varphi$ and for admissible functions $g$, i.e. $g\in C^\infty(\bbR^n)$ for which $g^\infty(\theta) = \lim_{r\to \infty} g(r\theta)/r^2$ exists and  $g^\infty \in C(\bbS^{n-1})$ is continuous. 

Following \cite{FKMT1}, we will compute space-time averages \eqref{eq:av} by using a Monte-Carlo sampling procedure
To this end, we utilize the equivalent representation of the measure $\nu^{N}$ as the law of the \emph{random field} $v_N$:
\begin{equation}\label{eq:func2}
  \langle \nu_{x,t}^N,g\rangle := \int\limits_{\bbR^n} g(\xi)\ d\nu^{N}_{(x,t)}(\xi) = \int_{\Omega} g(v_N(\omega;x,t))\ dP(\omega).
\end{equation}
We will approximate this integral by a Monte Carlo sampling procedure:
\begin{algorithm}\label{alg:montecarlo}
Let $N > 0$ and let $M$ be a positive integer. Let $\sigma$ be the initial Young measure and let $v_0$ be a (spatially divergence free) random field $v_0:\Omega\times\bbT^n \to \bbR^n$ such that $\sigma$ is the law of $v_0$.
\begin{description}
\item[{\bf Step 1:}] Draw $M$ independent and identically distributed random fields $v_0^k$ for $k=1,\dots, M$.
\item[{\bf Step 2:}] For each $k$ and \emph{for a fixed} ${\omega}\in\Omega$, use the spectral method \eqref{spEQ} to numerically approximate the incompressible Euler equations with initial data $v_0^{k}({\omega})$. Denote $v_{N,k}({\omega})$ as the computed solution.
\item[{\bf Step 3:}] Define the approximate measure-valued solution
\[
\nu^{N,M} := \frac{1}{M}\sum_{k=1}^M \delta_{v_{N,k}({\omega})}.
\]
\qed\qedhere
\end{description}
\end{algorithm}
For every admissible test function $g$, the space-time average \eqref{eq:av} is then approximated by
\begin{equation}\label{eq:mc1}
\int_{\bbT^n} \int_{\bbR_+} \varphi(x,t) \langle \nu_{x,t}^N,g\rangle dx dt \approx \frac{1}{M} \sum_{k=1}^{M} \int_{\bbR_+}\int_{\bbT^n} \varphi(x,t) g\bigl(v_{N,k}(\omega;x,t)\bigr)\ dxdt.
\end{equation}
The convergence of the approximate Young measures $\nu^{N,M}$ to a measure valued solution of the incompressible Euler equations \eqref{EQ} as $N,M \rightarrow \infty$ follows as a consequence of the law of large numbers. The proof is very similar to that of theorem 4.9 of \cite{FKMT1}.
\subsection{The spectral method in two space dimensions}
The spectral (viscosity) method \eqref{spEQ} is considerably simplified when we consider the equations \eqref{EQ} in two space dimensions.  
We recall that the divergence-free condition $\div(v) = 0$ is equivalent to the requirement that $\widehat v_k \perp k$ for all Fourier coefficients $\widehat v_k$ of $v$. In two spatial dimensions, this implies that we can write $\widehat v_k = a_k \, \BJ k$ for scalar coefficients $a_k$ and where $\BJ$ denotes the rotation matrix $$\BJ = \begin{pmatrix} 0 & -1 \\ 1 & 0 \end{pmatrix}.$$

The corresponding evolution equations for the coefficients $a_k$ are found by taking the inner product of $\frac{d}{dt} \widehat v(k)$ with $\BJ\,k/k^2$. This yields the following equivalent form of \eqref{EQF}:
\begin{equation}
\label{spODE}
\frac{d a_k }{dt} =   \frac{(-i)}{k^2} \sum_{\substack{\ell, m \\ \ell + m -k = 0} }  (\BJ k\cdot \ell)(k\cdot m) \, a_\ell a_m .
\end{equation}
It is now natural to define a (real-valued) function $\psi$ by $$\psi(x,t) = (-i)\sum_k a_k(t) e^{ikx}.$$ This function is usually referred to as the stream function. 

If $\nabla^\top$ denotes the operator $(-\partial_{x_2},\partial_{x_1})^T = \BJ \nabla$ acting on functions, then $v$ is given by $v = \nabla^\top \psi$. Thus, $\psi$ determines $v$ uniquely. On the other hand, given $v$, we can recover $\psi$ by solving
 $\Delta \psi= \curl v$. This equation has a unique solution for sufficiently smooth $v$ if we require in addition that $\int \psi \dx = 0$. Solving the equations for the scalar $a_k$ are an attractive form of the spectral method \eqref{spEQ}.

The introduction of the stream function also provides a connection to the vorticity formulation of the incompressible Euler equations. In two space dimensions, the vorticity equation \eqref{eq:vor} is simplified to,
 \begin{gather}\label{vortEQ}
 \left\{
 \begin{aligned}
 \partial_t \eta+ v \cdot \nabla \eta &= 0, \\
  \curl v &= \eta.
  \end{aligned}
  \right.
  \end{gather}
 Corresponding to \eqref{spEQ}, we also obtain the semi-discretized version for to the vorticity formulation.
  \begin{gather}\label{vortspEQ}
\left\{
	\begin{aligned} 
		\partial_t \eta_N + \BP_N\left(v_N\cdot \nabla \eta_N\right) &= \varepsilon  \div\left( (\BI - \BP_m) \nabla \eta_N \right), \\
		\eta_N(x,0) = \curl \BP_Nv_0(x).
	\end{aligned}
	\right.
\end{gather}
The system of equations \eqref{vortEQ} is formally equivalent to \eqref{EQ}.\footnote{The two equations are strictly equivalent only if the flow is sufficiently smooth.} The important observation for us is the following: Even though the two full systems of equations might not be equivalent, their Fourier truncated versions are \emph{always} equivalent. 

\begin{lemma}
\label{equivalence}
The truncated systems with spectrally small vanishing viscosity \eqref{spEQ} for $v_N$ and \eqref{vortspEQ} for $\eta_N$ are equivalent.
\end{lemma}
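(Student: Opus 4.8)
The plan is to exhibit the map $v_N \mapsto \eta_N := \curl v_N$ as a bijection between solutions of \eqref{spEQ} and solutions of \eqref{vortspEQ}, whose inverse is the Biot--Savart reconstruction $v_N = \nabla^\top \psi_N$ with $\Delta \psi_N = \eta_N$. I would first record the algebraic correspondence at the level of Fourier coefficients: writing $\widehat{(v_N)}_k = a_k \, \BJ k$ as in \eqref{spODE}, a short computation gives $\widehat{(\eta_N)}_k = i |k|^2 a_k$ for every $k \neq 0$. Hence the truncation $|k| \le N$ is preserved, and, since we may assume $\widehat{(v_N)}_0 = 0$, the correspondence between divergence-free truncated $v_N$ and mean-zero scalar $\eta_N$ is one-to-one. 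This bijection is what lets one pass freely between the two formulations.

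For the forward direction I would apply $\curl$ to \eqref{spEQ}. The essential point is that $\curl$ annihilates gradients, so the Leray (pressure) part hidden inside the projection $\BP_N$ disappears and only the Fourier truncation survives; as truncation is a Fourier multiplier, it commutes with $\curl$. Thus $\curl \BP_N(v_N \cdot \nabla v_N) = \Pi_N \, \curl(v_N \cdot \nabla v_N)$, where $\Pi_N$ denotes the scalar Fourier truncation to modes $|k| \le N$. Combined with the two-dimensional identity $\curl(v_N \cdot \nabla v_N) = v_N \cdot \nabla \eta_N$, which holds precisely because $\div(v_N) = 0$, this reproduces the nonlinear term $\BP_N(v_N \cdot \nabla \eta_N)$ of \eqref{vortspEQ}. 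For the viscous term I would note that $\curl$ likewise commutes with the Fourier-multiplier operators $\Delta$ and $\BI - \BP_m$, so that $\curl \div\big((\BI-\BP_m)\nabla v_N\big) = \div\big((\BI - \BP_m)\nabla \eta_N\big)$; in Fourier both reduce to $-\mathbf{1}_{|k|>m} |k|^2 \widehat{(\eta_N)}_k$. The initial data match by definition, since $\eta_N(\cdot,0) = \curl \BP_N v_0$.

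The reverse direction follows either by reconstructing $v_N$ from $\eta_N$ via the stream function and checking that the resulting divergence-free field solves \eqref{spEQ} with the pressure fixed as the gradient part of $\BP_N$, or, most transparently, by observing that under the correspondence $\widehat{(\eta_N)}_k = i|k|^2 a_k$ \emph{both} systems collapse to one and the same coefficient system \eqref{spODE}, augmented by the identical viscous damping $-\varepsilon\, \mathbf{1}_{|k|>m} |k|^2$ acting on each mode. Since this ODE system has a unique solution for given initial coefficients, the two formulations describe the same object, which is the asserted equivalence.

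I expect the only genuine subtlety to be the bookkeeping with the projection operators: one must interpret $\BP_N$ and $\BI - \BP_m$ as acting by pure Fourier truncation on the scalar quantities $\eta_N$ and $\nabla \eta_N$, and confirm that the Leray component of $\BP_N$ in the velocity formulation contributes nothing after $\curl$ is applied. Once this is made precise, every remaining step is a routine commutation of $\curl$ with Fourier multipliers, together with the single vector identity $\curl(v_N \cdot \nabla v_N) = v_N \cdot \nabla \eta_N$.
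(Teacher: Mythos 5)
Your argument is correct and follows essentially the same route as the paper's proof: take the curl of the velocity system, use the two-dimensional identity $\curl(v_N\cdot\nabla v_N)=v_N\cdot\nabla\,\curl v_N$ together with the commutation of the projection operators with $\curl$, and conclude via uniqueness for the resulting finite-dimensional ODE system. Your extra care in noting that the Leray (gradient) component of $\BP_N$ is annihilated by $\curl$, so that only the scalar Fourier truncation survives, makes explicit a point the paper compresses into the phrase that the projections ``commute with differentiation,'' but the substance of the argument is the same.
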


\begin{proof}
Let $v_N$ and $\eta_N$ be solutions of \eqref{spEQ} and \eqref{vortspEQ}, respectively. Since $v_N$ is smooth for fixed $N$ and because the projection operators $\BP_N$ commute with differentiation, we can take the curl of \eqref{spEQ} to obtain
\begin{gather}\label{spectralNS}
\left\{
	\begin{aligned} 
		\partial_t \curl v_N + \BP_N\left(\curl(v_N\cdot \nabla v_N)\right) &=  \varepsilon  \div\left( (\BI - \BP_m) \nabla  \curl v_N\right), \\
		\curl v_N(x,0) = \curl \BP_N v_0(x).
	\end{aligned}
	\right.
\end{gather}
We note that $\curl(v_N \cdot \nabla v_N) = v_N \cdot \nabla \curl v_N$. Hence, both $\eta_N$ and $\curl v_N$ satisfy system \eqref{vortspEQ}. By classical uniqueness results for ODEs, this implies that we must have $\eta_N = \curl v_N$ and the two systems are seen to be equivalent.
\end{proof}

In particular, by Lemma \ref{equivalence} we may use the apparently simpler system \eqref{vortspEQ} for our numerical computations, rather than the larger system \eqref{spEQ}. This reduces the computational cost considerably.
\subsubsection{Time stepping}
The spectral (viscosity) method in the velocity formulation \eqref{spEQ} or the equivalent vorticity formulation \eqref{vortspEQ} constitute a system of ODEs (for the corresponding Fourier coefficients) at each time step. These ODEs are integrated in time by using a third-order strong stability preserving Runge-Kutta (SSP-RK3) method of \cite{GST1}.
\section{Numerical experiments}
\label{sec:numex}
In this section, we will provide numerical experiments that demonstrate the theory developed in the previous section (particularly the convergence of algorithm \ref{alg1}).
\subsection{Rotating vortex patch}
\begin{figure}
	\begin{subfigure}{0.49\textwidth}
\includegraphics[width=\textwidth]{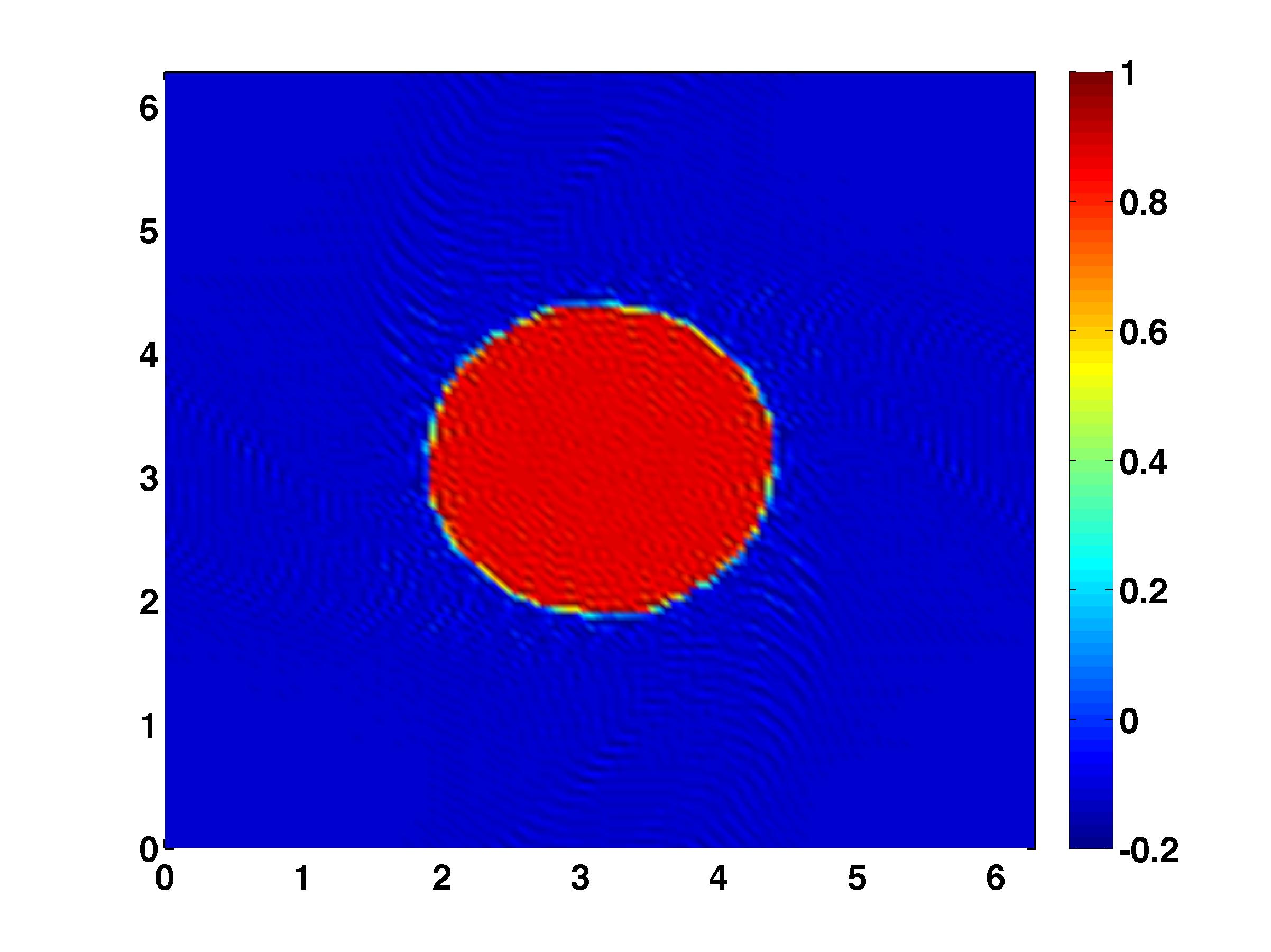}
\caption{$N = 128$}
\end{subfigure}
\begin{subfigure}{0.49\textwidth}
\includegraphics[width=\textwidth]{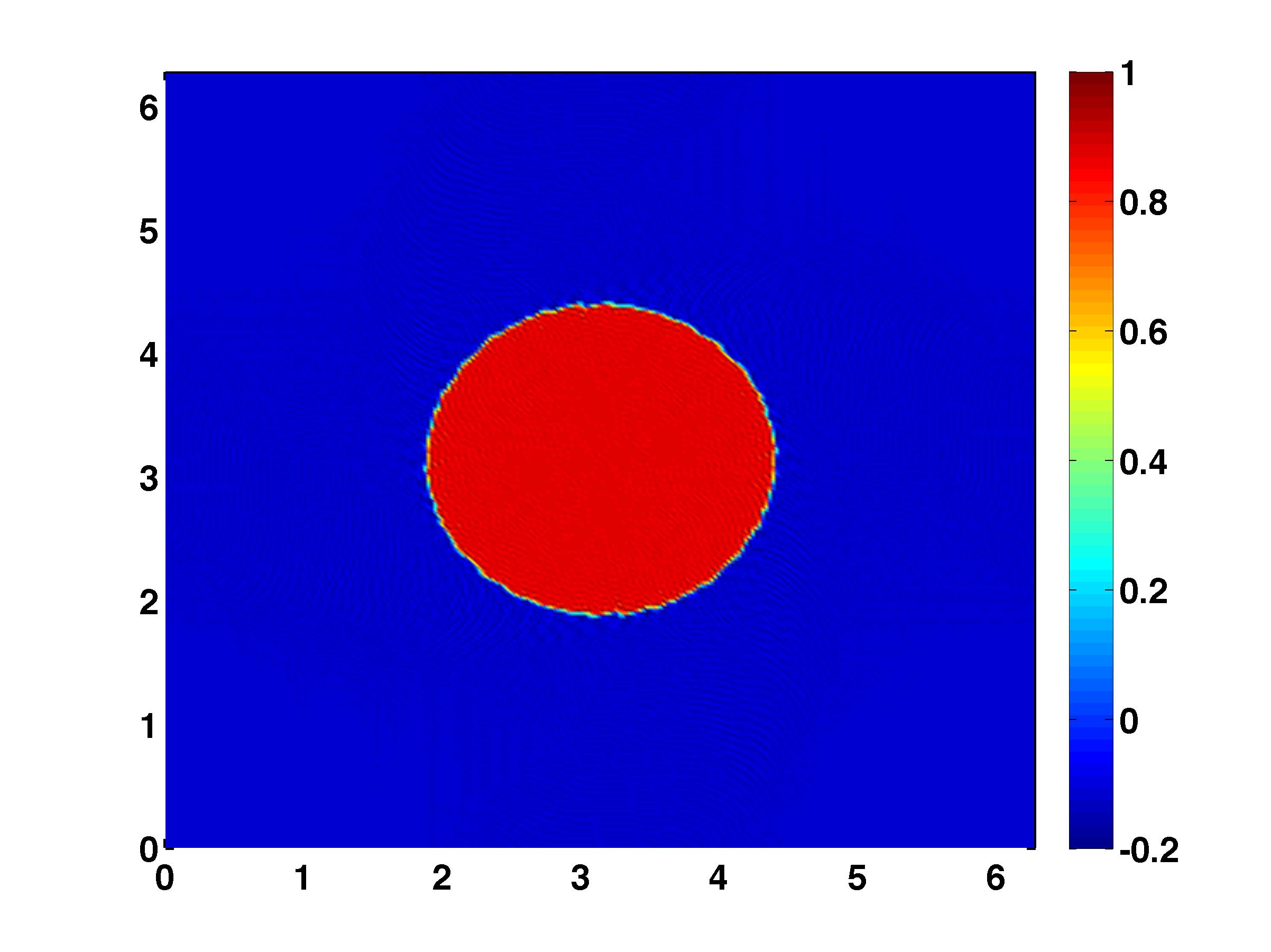}
\caption{$N = 256$}
\end{subfigure} \\
\begin{subfigure}{0.49\textwidth}
\includegraphics[width=\textwidth]{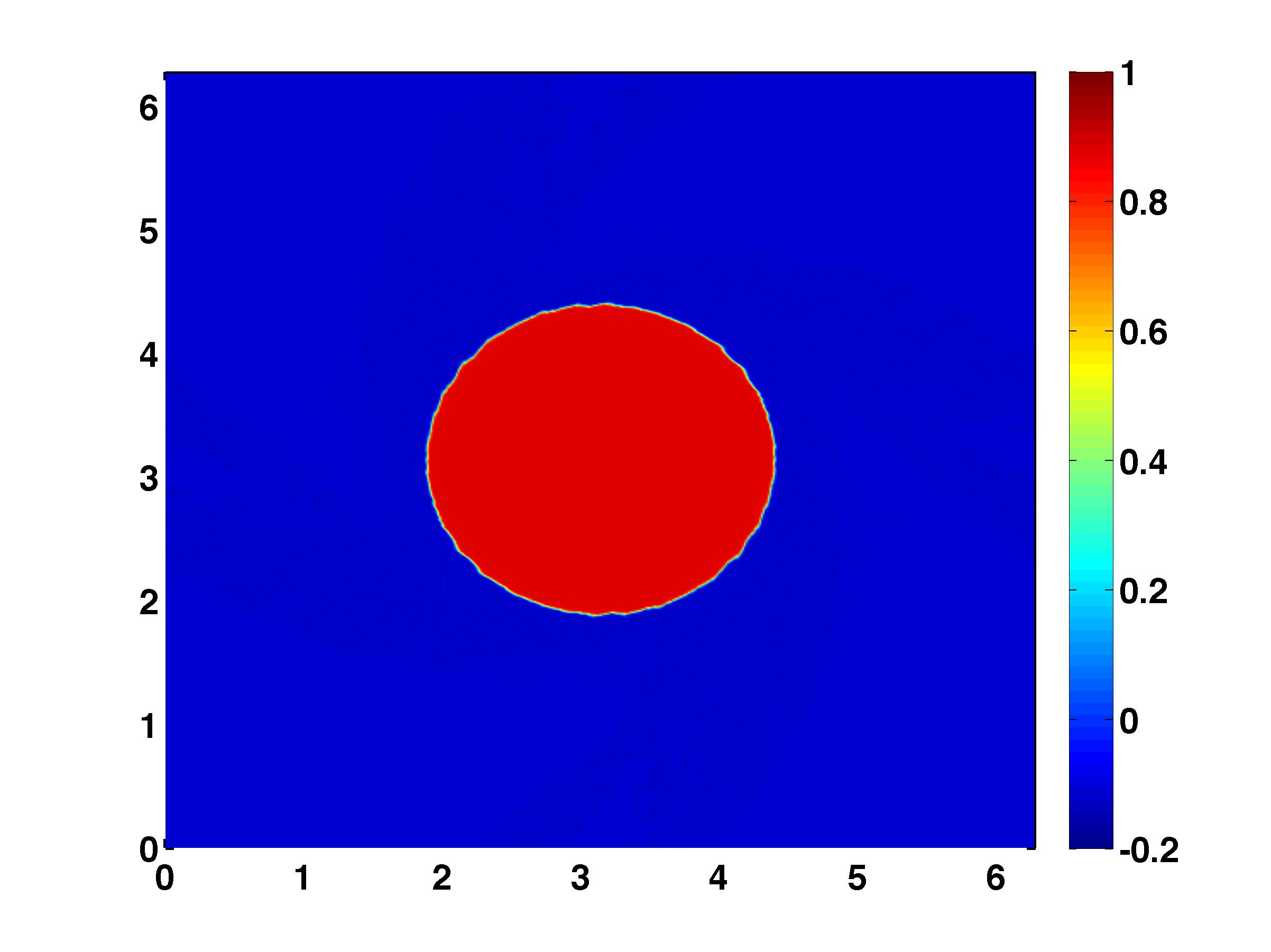}
\caption{$N = 512$}
\end{subfigure}
\begin{subfigure}{0.49\textwidth}
\includegraphics[width=\textwidth]{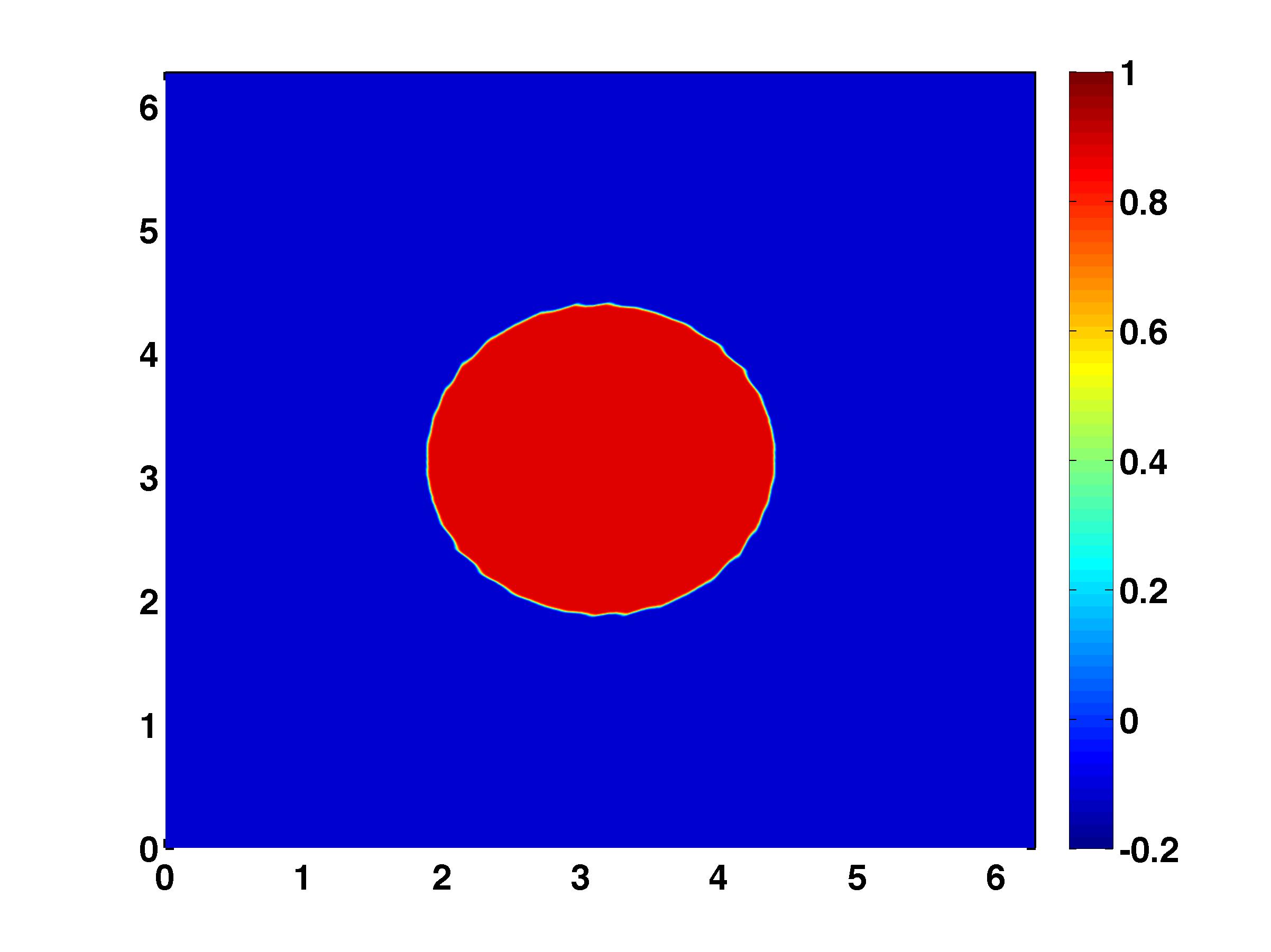}
\caption{$N = 1024$}
\end{subfigure}
\caption{Rotating vortex patch: sample convergence of the vorticity wrt number of Fourier modes $N$: Top left $N= 128$, Top right: $N=256$. Bottom left $N=512$. Bottom right: $N=1024$.}
	\label{fig:1}
\end{figure}
\begin{figure}
	\begin{subfigure}{.45\textwidth}
\includegraphics[width=\textwidth]{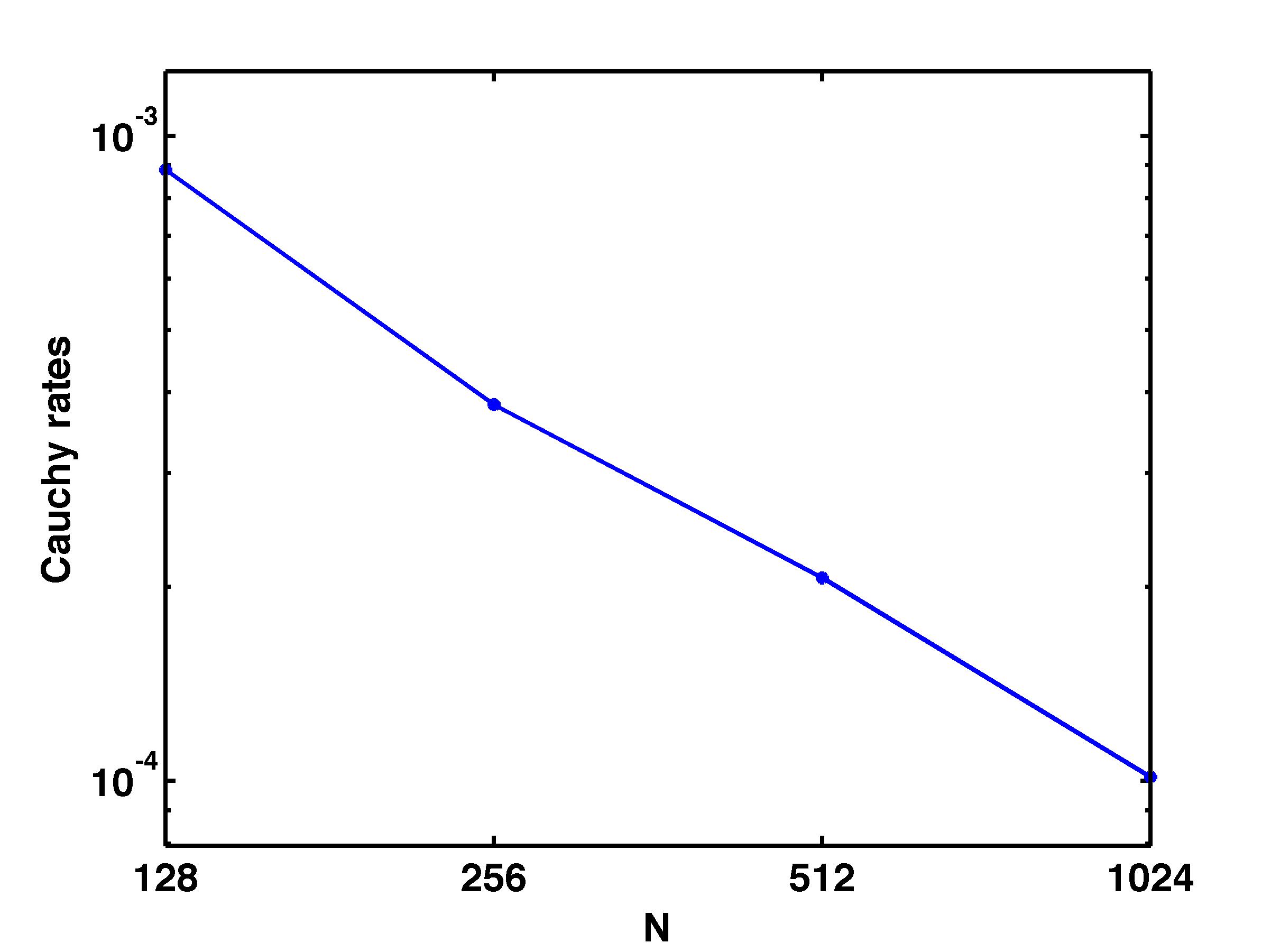}
\caption{wrt $N$}
\end{subfigure}
\begin{subfigure}{.45\textwidth}
\includegraphics[width=\textwidth]{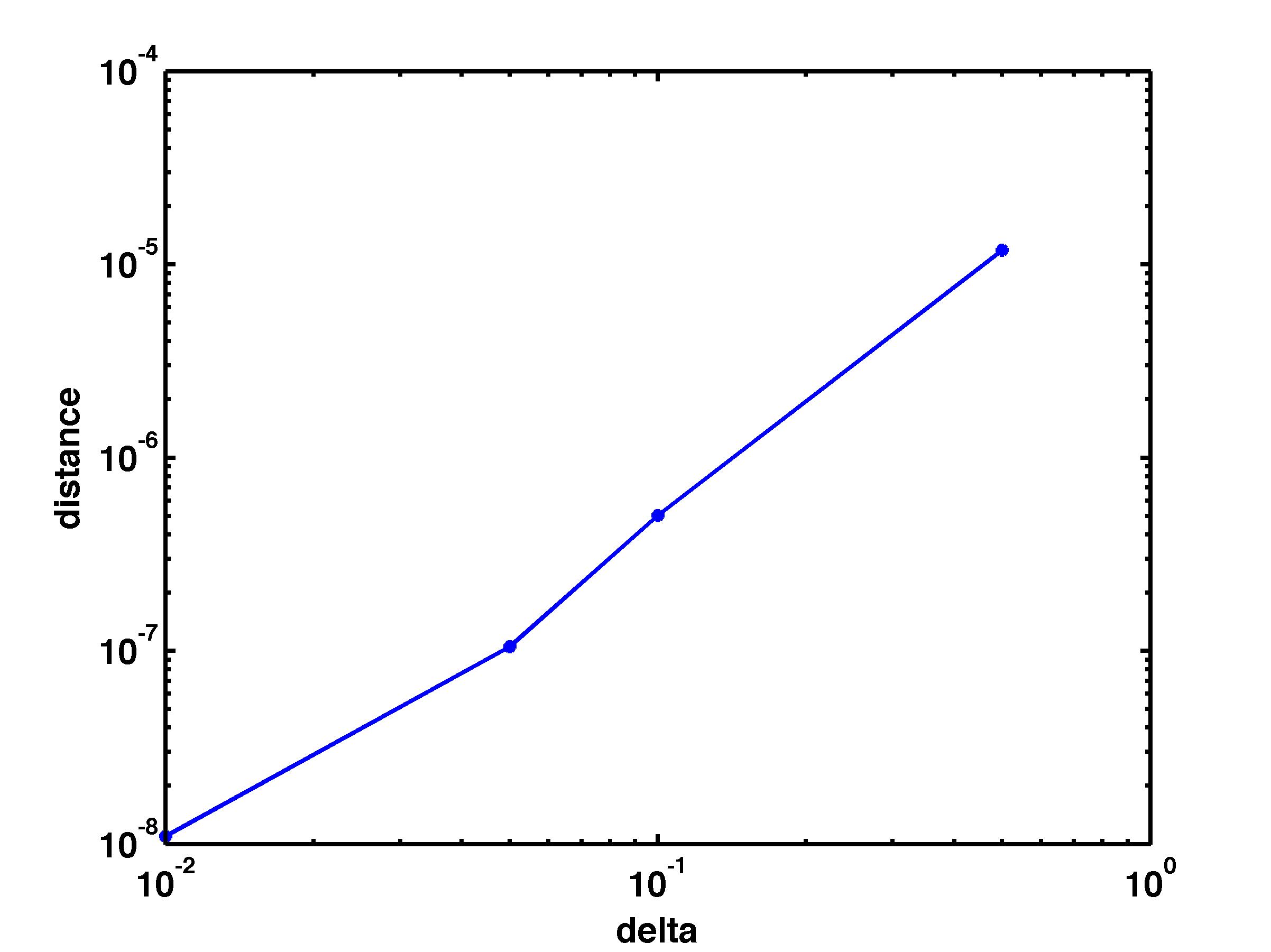}
\caption{wrt $\delta$}
\end{subfigure}
	
	\caption{Rotating vortex patch: Left: Cauchy rates \eqref{eq:diff} wrt $N$. Right: Cauchy rates wrt $\delta$}
	\label{fig:2}
\end{figure}

A rotating vortex patch can be simulated by considering the two-dimensional Euler equations with initial data for the vorticity as,
 \begin{equation}
 \label{eq:vp1}
 \eta_0(x) = \begin{cases} 1, & (x_1-\pi)^2+(x_2-\pi)^2 \le \pi/2, \\ 0, & \text{otherwise}. 
 \end{cases}
 \end{equation}

As our objective is to test the algorithm \ref{alg1}, we consider a perturbed version of the rotating vortex patch (see step $1$ of algorithm \ref{alg1}). In radial coordinates about the center $(\pi,\pi)$, we define a random perturbation 
$$p_\delta(\theta) = 1 + \sum_{k=1}^K a_k \sin(b_k + (20+k)\theta),$$
with $a_1, \dots, a_K \in [0,1],\, b_1, \dots, b_K \in [0,2\pi]$ independent and identically distributed random variables, chosen according to a uniform distribution with renormalization $\sum_{k=1}^K |a_k|^2 = \delta$. We set $K=20$ in our computations.The perturbed initial data, depending on the perturbation parameter $\delta > 0$, is given in terms  vorticity,
\begin{equation}
\label{eq:vp} 
\eta_0^\delta(r,\theta) = \eta_0(r- p_\delta(\theta), \theta).
\end{equation}
The corresponding velocity field $v_0^{\delta}$ is obtained from the Biot-Savart law \cite{bertozzi}.

First, we fix a realization of the random field $v_0^{\delta}(\omega)$ by setting $\delta = 0.0128$. This initial data is evolved using the spectral (viscosity) method with $\epsilon = 10^{-5}$, $m = 0$. The results are then presented in figures \ref{fig:1} and \ref{fig:2}. In figure \ref{fig:1}, we present the vorticity as the number of modes $N$ is increased from $128$ to $1024$. We see that the vortex patch is well resolved with increasing resolution. 

Next, we compute the differences between successive resolutions,
\begin{equation}
\label{eq:diff}
\|v^{\delta}_N(t) - v^{\delta}_{N/2}(t) \|_{L^2}^2.
\end{equation}
This difference at time level $T= 2$, shown in figure \ref{fig:2} (left), clearly converges as $N \rightarrow \infty$. Consequently, the sequence of approximations (for a single realization) forms a Cauchy sequence and hence converges.

Since, the algorithm \ref{alg1} is based on setting the perturbation amplitude $\delta \rightarrow 0$, we fix the number of approximating Fourier modes $N = 512$ and decrease $\delta$. The corresponding difference between two successive values of $\delta$ is shown in figure \ref{fig:2} (right) and shows that the approximations clearly converge as the perturbation amplitude is reduced. Thus, for each fixed realization (sample), we already observe convergence of the spectral method as well as stability of the computed solutions with respect to perturbations in initial data. Although the initial data is not smooth (the vorticity is discontinuous), this convergence and stability are not surprising as the solution does not possess any fine scale features. Consequently, the computed measure valued solution $\nu$ is \emph{atomic} as shown in figure \ref{fig:3}.
\begin{figure}
	\centering
	\includegraphics[width=\textwidth]{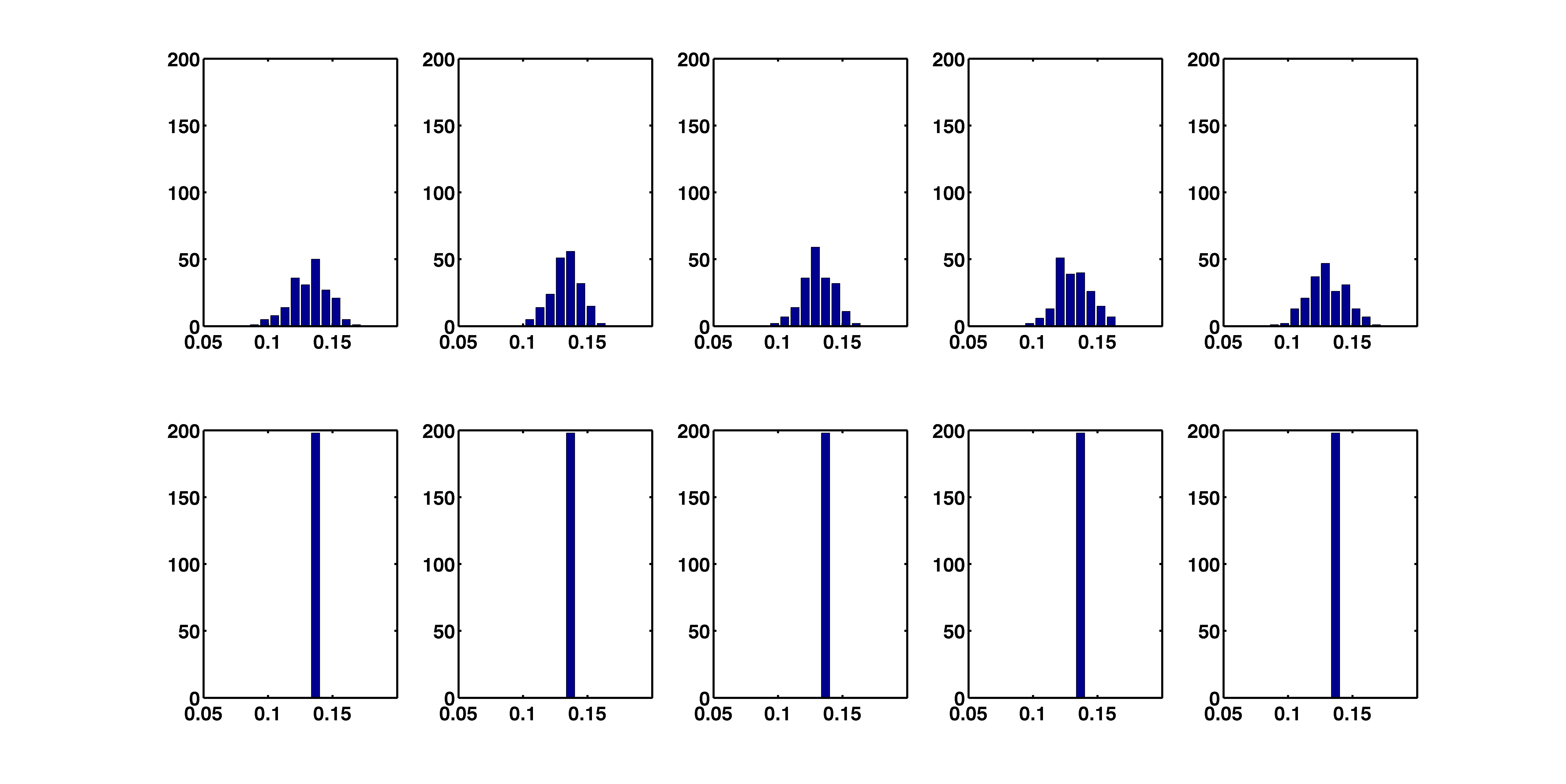}
	\caption{Rotating vortex patch: PDFat $x =  2\pi\cdot(0.65,0.55)$, with respect to time $t=0,0.5,1,2,4$ at two different delta values $\delta = 0.512$ (top), illustrating atomicity}
	\label{fig:3}
\end{figure}

\subsection{Flat vortex sheet}
\label{subsec:fvs}
Next, we consider a flat vortex sheet as a prototype for two-dimensional Euler flows with singular behavior. The underlying initial data is
\begin{equation} 
\label{eq:vs}
v^0(x) = \begin{cases} (-1,0), & \text{if } \; \pi/2 < x_2 \le 3\pi/2, \\ (1,0), & \text{if} \; x_2 \le \pi/2 \; \text{ or } \; 3\pi/2 < x_2,
\end{cases}
\end{equation}
on a periodic domain $[0,2\pi]^2$. The initial vorticity in this case is a bounded measure. 

It is straightforward to check that the initial data for the flat vortex sheet \eqref{initdata} is a stationary (steady state) weak solution of the two-dimensional Euler equations. However, this datum also belongs to the class of \emph{wild initial data} in the sense of Szekelyhidi \cite{Sz1}. Thus, infinitely many admissible weak solutions were constructed in \cite{Sz1}. 

Our objective is to compute the (admissible) measure valued solution, corresponding to this atomic initial data, by applying the algorithm \ref{alg1}. To this end, we mollify the initial data $v^0$ to obtain a smooth approximation $v^0_{\rho} = \left(\pi_1v^{0}_{\rho},\pi_2v^{0}_{\rho}\right)$ of \eqref{initdata},
 \begin{gather*}
\pi_1v^{0}_{\rho}(x_1,x_2) =
\left\{
\begin{aligned}
\tanh\left(\dfrac{x_2-\pi/2}\rho\right), \quad & (x_2 \le \pi) \\ 
\tanh\left(\dfrac{3\pi/2-x_2}\rho\right), \quad & (x_2 > \pi)
 \end{aligned}\; \right\} , \quad 
 \pi_2v^{0}_{\rho}(x_1,x_2) = 0.
 \end{gather*}
 with a small parameter $\rho$ that determines the sharpness of the transition between $-1$ and $1$ across the interfaces.

 To obtain a random field (as required by Step 1 of algorithm \ref{alg1}, we further introduce perturbations of the two interfaces by the following perturbation ansatz,
  $$p_\delta(x) = \sum_{k=1}^{K} \alpha_k \sin(kx_1-\beta_k),$$
for randomly chosen numbers $\alpha_1, \dots, \alpha_K \in \bbR$, $\beta_1, \dots, \beta_K\in [0,2\pi)$ with $\sum_{k=1}^K |\alpha_k|^2 = \delta$. For our computations, we used a fixed value of $K = 10$ perturbation modes. 
\begin{figure}
	\centering
	\includegraphics[width=.5\textwidth]{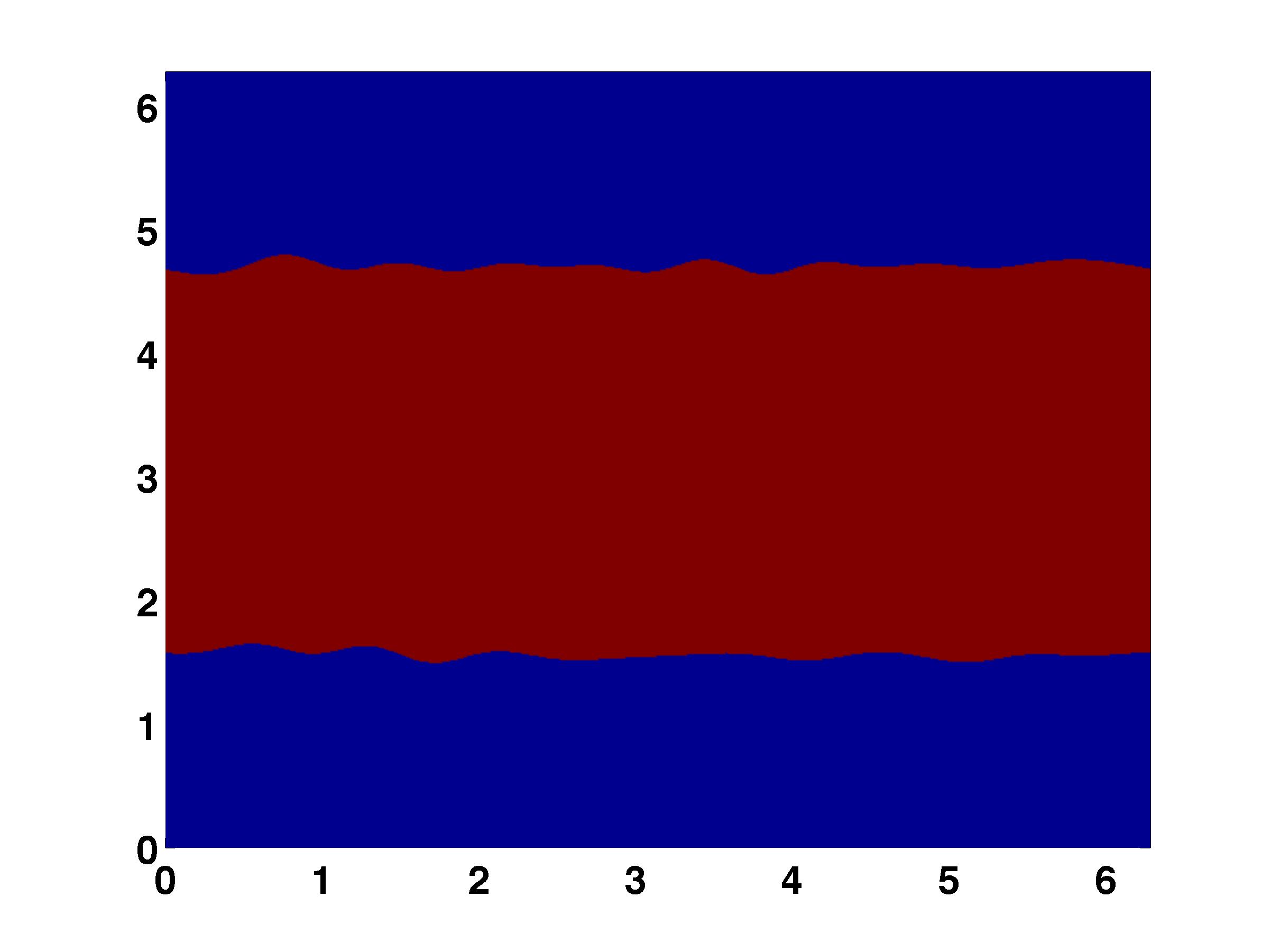}
	\caption{Initial tracer distribution corresponding to the perturbed flat vortex sheet with $\delta = 0.0512$.}
	\label{initdata}
\end{figure}

The result of this ansatz is a random field $v^0_{\rho}(x_1,x_2-p_\delta(x_1))$ depending on two parameters $\rho$ and $\delta$. The parameter $\delta$ controls the magnitude of the permutation, while $\rho$ determines the smoothness across the interfaces. Projecting  this random field back to the space of divergence-free vector fields (using the Leray projection), we obtain our initial random perturbation $X^0_{\rho,\delta}$ which serves as our initial data. In order to visualize this initial velocity field, we consider a passively advected tracer and plot it in figure \ref{initdata}. 

For a fixed number of Fourier modes $N$, we aim to compute the corresponding approximate Young measure $\nu^{\rho,\delta}_N$ (Step 2 of algorithm \ref{alg1}). Then, the measured valued solution of \eqref{EQ} will be realized as a limit of $\nu^{\rho,\delta}_N$ as $N \rightarrow \infty, \rho,\delta \rightarrow 0$. 

First, we fix a single realization of the random field $X^0_{\rho,\delta}(\omega)$ by fixing $\omega$. To visualize the resulting approximate solutions, we show a passive tracer (advected by the velocity field) in figure \ref{fig:5}, at time $t=2$  and with $(\delta,\rho) = (0.01,0.001)$, at different Fourier modes $N$ ranging from $N=128$ to $N=1024$. We see from the figure that as the resolution is refined, finer and finer scale features emerge, indicating that the tracer is getting mixed by the fluid at smaller and smaller scales. Furthermore, this indicates that the underlying velocity field may not converge as the number of Fourier modes is increased. This is indeed verified in figure \ref{fig:5} (left), where we show the successive differences \eqref{eq:diff} of the approximate solution in $L^2$ (for a single sample). The differences do not seem to converge, indicating the approximate solutions may not form a Cauchy sequence. Hence and in contrast with the vortex patch example, the approximate solutions for a single realization (sample) may not converge, at least for the computed resolutions. 
\begin{figure}
	\centering
	\includegraphics[width=\textwidth]{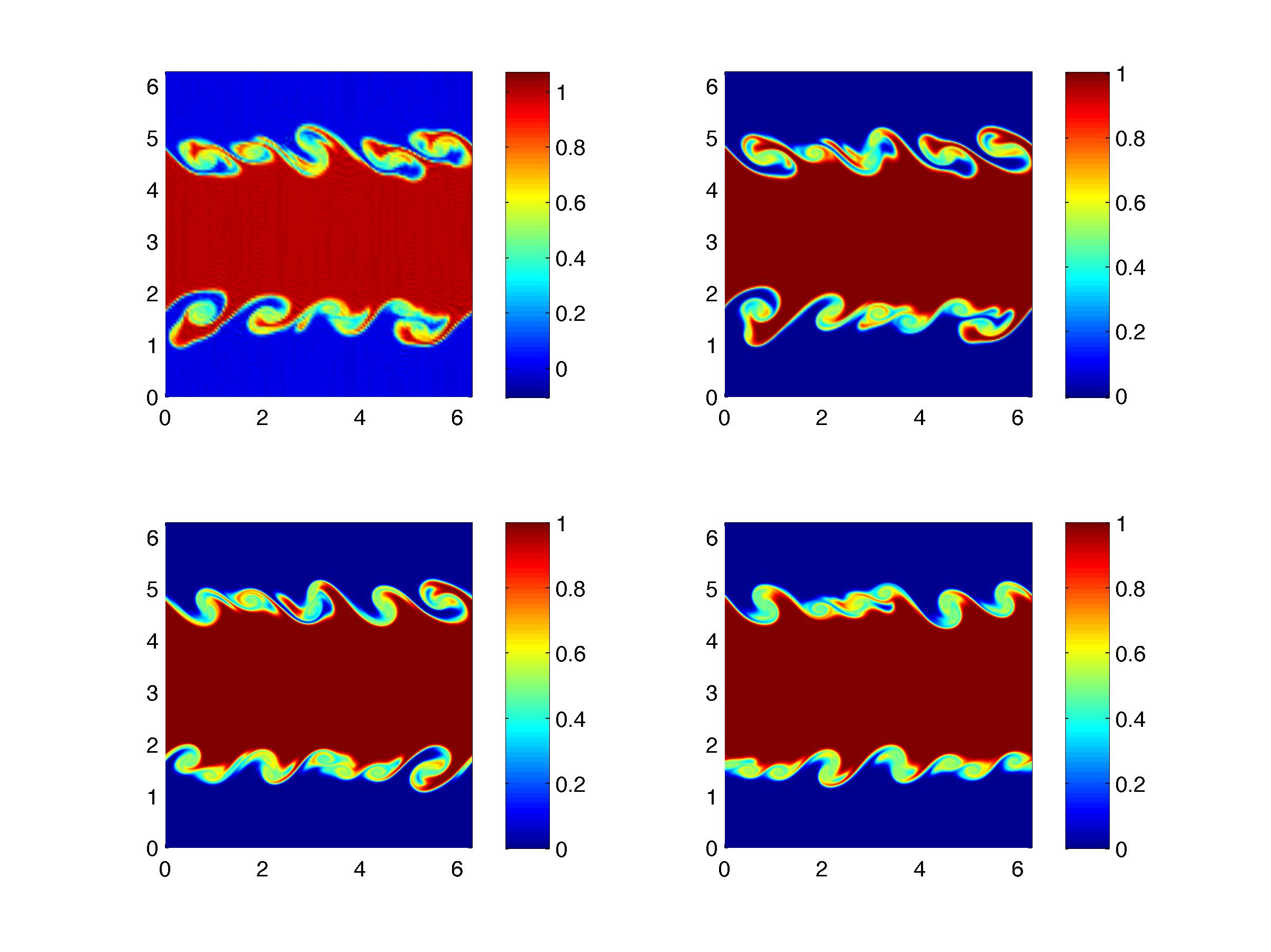}
		\caption{Flat vortex sheet (sample): Non-convergence with respect to $N$. A passive tracer (advected by the velocity field) is shown at time $t=2$ for different Fourier modes. Top left: $N=128$, Top right $N=256$, Bottom left $N=512$ and Bottom right $N=1024$.}
	\label{fig:5}
\end{figure}

\begin{figure}
	\begin{subfigure}{.45\textwidth}
\includegraphics[width=\textwidth]{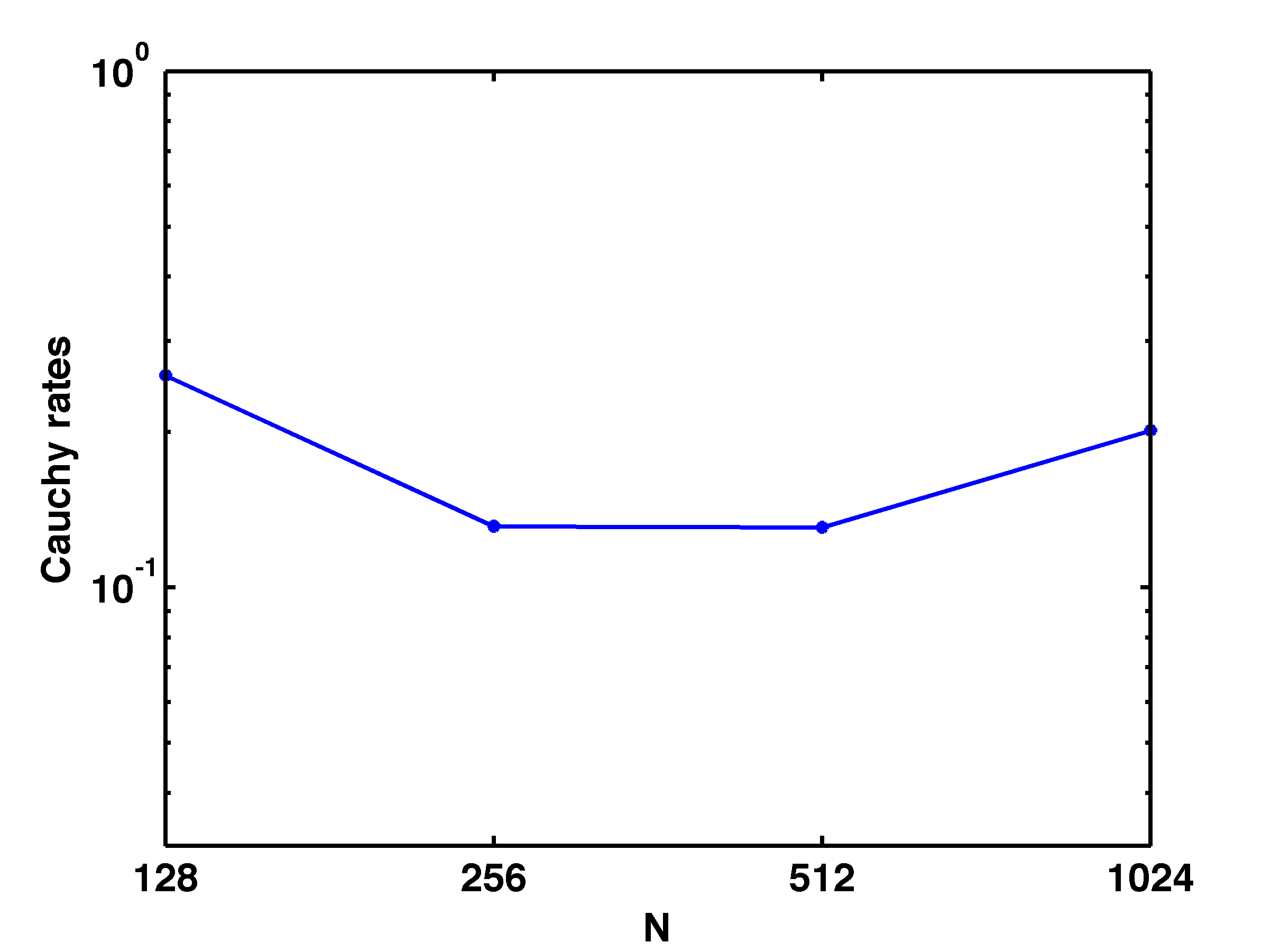}
\caption{wrt $N$}
\end{subfigure}
\begin{subfigure}{.45\textwidth}
\includegraphics[width=\textwidth]{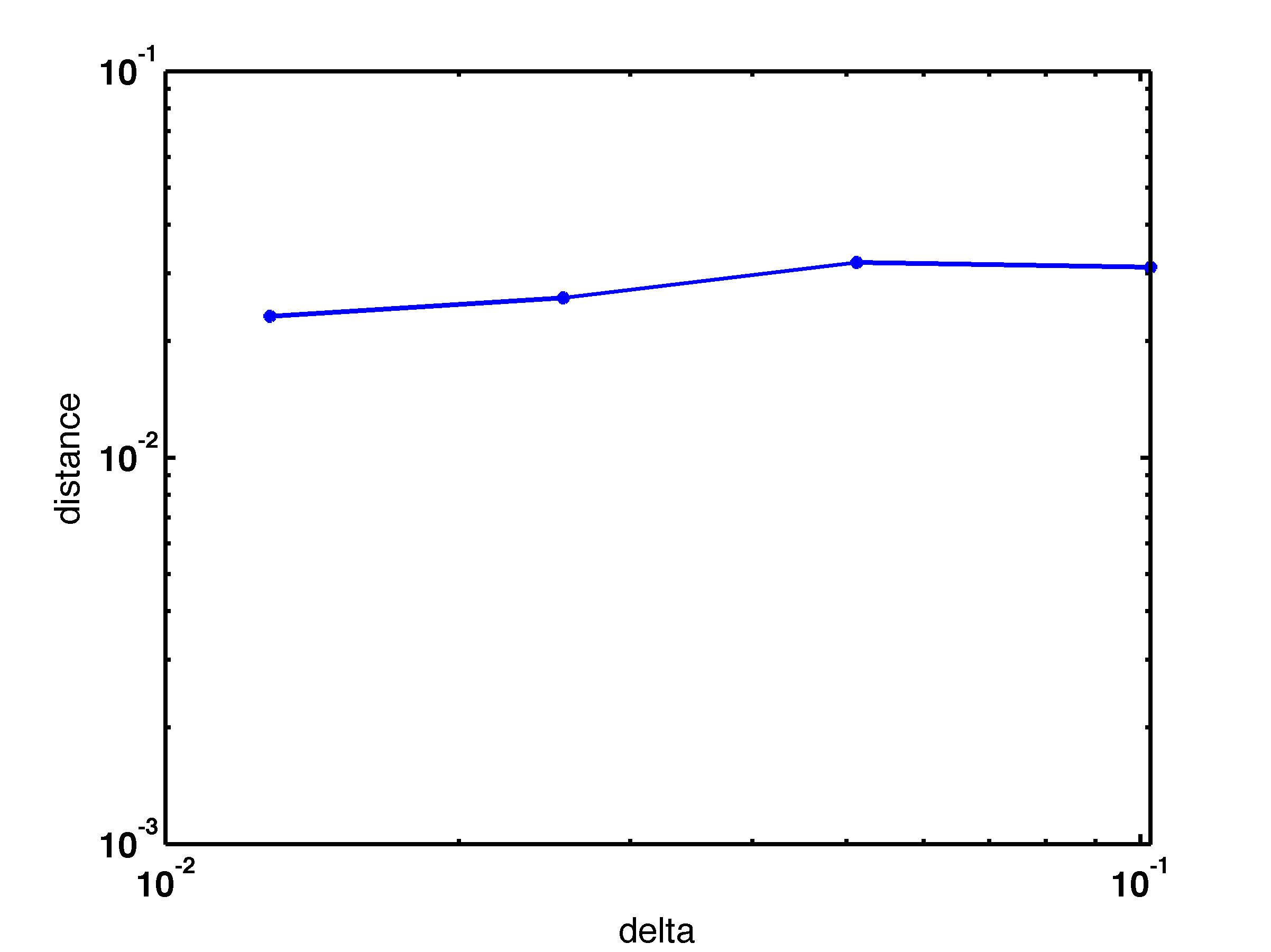}
\caption{wrt $\delta$}
\end{subfigure}
	\caption{Flat vortex sheet (sample): (left) Cauchy rates wrt $N$, (right) Cauchy rates wrt $\delta$}
	\label{fig:6}
\end{figure}

Next, we consider the stability of the approximate solutions (for a single realization) with respect to the perturbation parameter $\delta$. For a fixed $N=512$ and time $t=2$, we show a passively advected tracer, for different values of $\delta$ in figure \ref{fig:7}. Again, the fine scale structure of the solutions is very different for each value of $\delta$. As shown in figure \ref{fig:6} (right), the difference (in $L^2$) for successive values of $\delta$ does not decrease as $\delta$ decreases. This indicating that the perturbed solutions do not converge as the perturbation tends to zero, indicating instability of the flat vortex sheet \eqref{eq:vs} with respect to perturbations. 

\begin{figure}
	\centering
		\includegraphics[width=\textwidth]{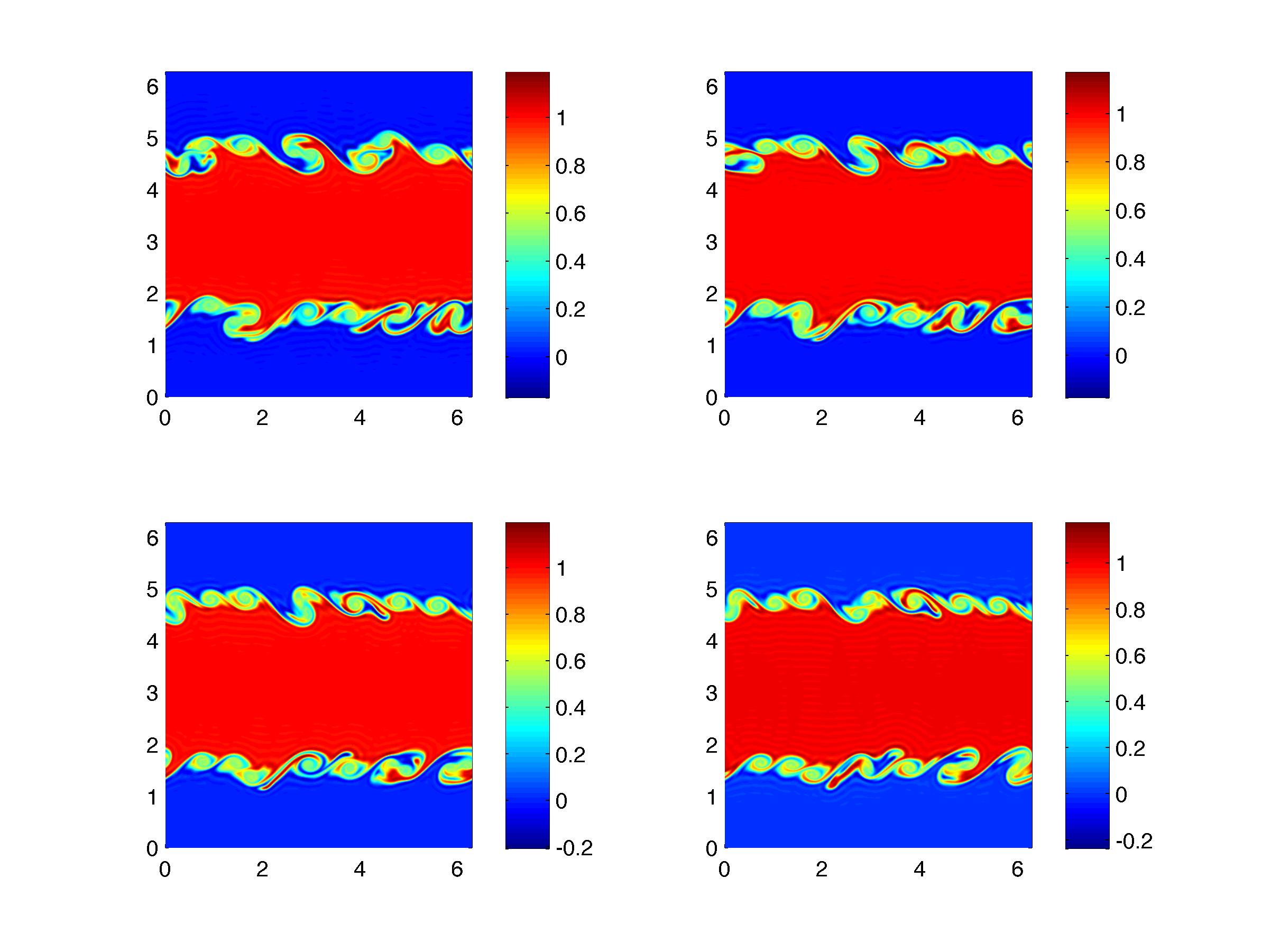}
	\caption{Flat vortex sheet: Instability with respect to perturbation parameter $\delta$. Passively advected tracer at $t=2$ for Top left: $\delta = 0.0512$, Top right: $\delta = 0.0256$, Bottom left $\delta = 0.0128$, Bottom right $\delta = 0.0064$.}
	\label{fig:7}
\end{figure}


Having observed the lack of convergence (and stability) for single realizations of the perturbed vortex sheet, we apply the algorithm \ref{alg1} to compute the approximate Young measure. To this end, we use the Monte Carlo algorithm \ref{alg:montecarlo} with $M=400$ samples. We compute the mean of the approximate Young measure by setting $g(\xi) = \xi$ in \eqref{eq:func2}. Similarly, the second moments are computed by setting $g(\xi) = \xi \otimes \xi$ in \eqref{eq:func2}. The mean of the first component and second moment of the second component ($g(\xi) = \xi^2_2$) at time $t=2$, for different number of Fourier modes are shown in figures \ref{fig:7a} and \ref{fig:7b}, respectively. In complete contrast to figure \ref{fig:5} (single sample) and as predicted by Theorem \ref{thm:alphaconv}, both the mean as well as the second moment seem to converge as the number of Fourier modes is increased. This convergence is further verified in figure \ref{fig:7c} (left), where successive $L^2$ differences of the mean velocity field and the second $\xi_2\xi_2$ moment are displayed. The convergence in the second moment is slower than than that of the mean. This is not unexpected as we use the same number of samples for the computation of both the mean and the second-moment. Furthermore, from figure \ref{fig:7a} and in sharp contrast with the case of single realizations, we observe that small scale features are averaged out in the statistical quantities such as the mean and the second moment.
\begin{figure}
	\centering
	\begin{subfigure}{.49\textwidth}
	\includegraphics[width=\textwidth]{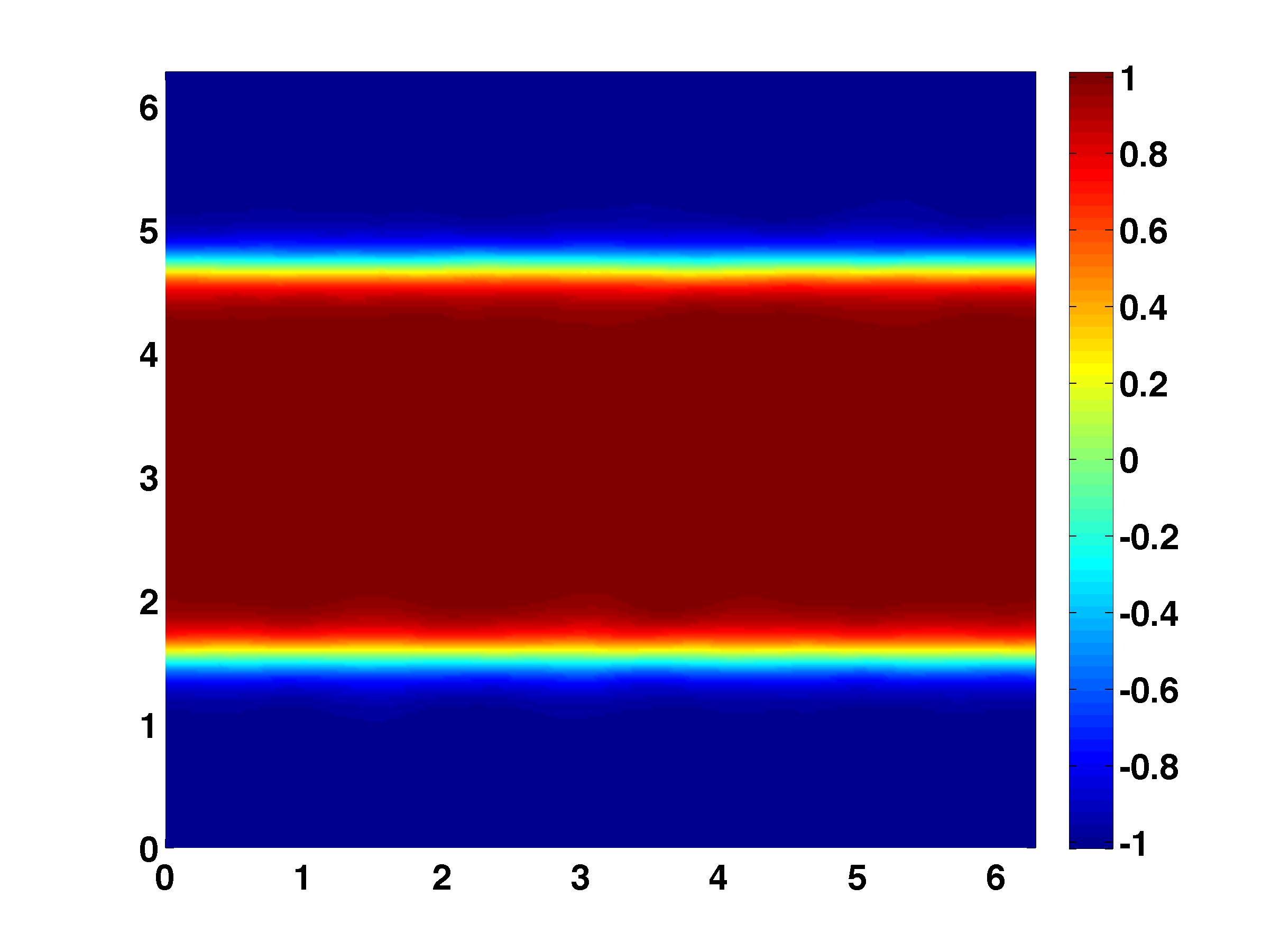}
	\end{subfigure}
	\begin{subfigure}{.49\textwidth}
	\includegraphics[width=\textwidth]{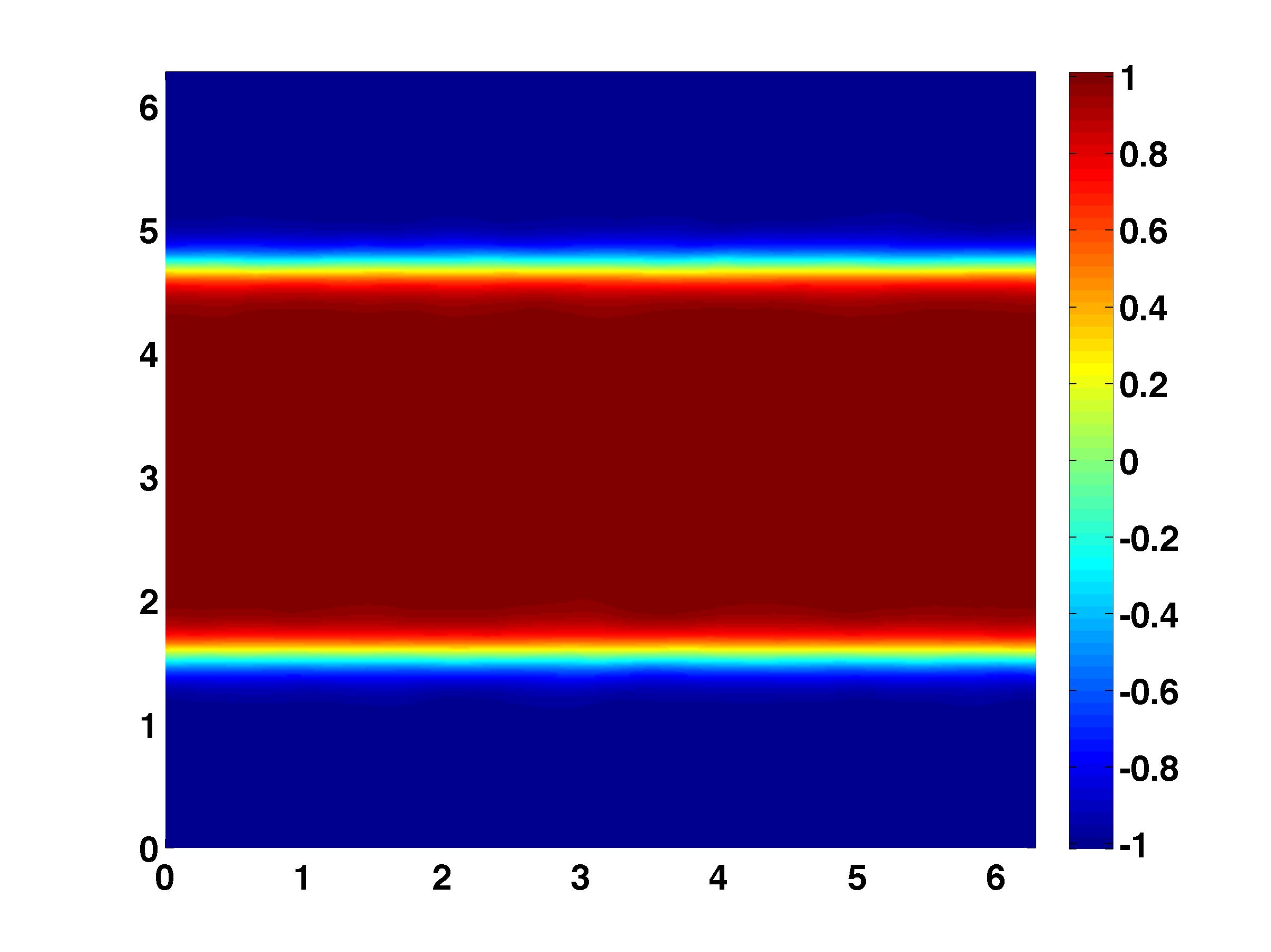}
	\end{subfigure}\\
	\begin{subfigure}{.49\textwidth}
	\includegraphics[width=\textwidth]{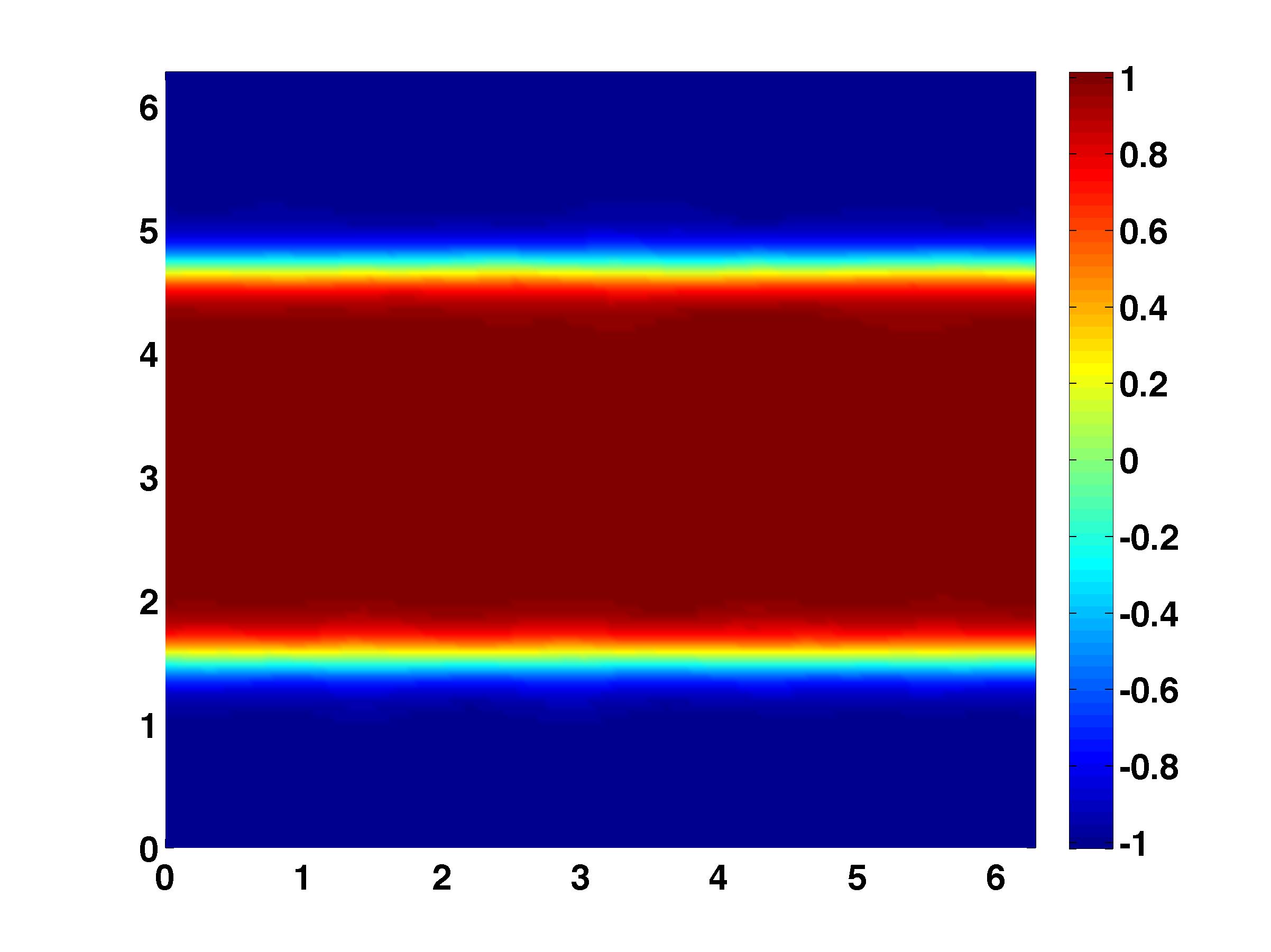}
	\end{subfigure}
	\begin{subfigure}{.49\textwidth}
	\includegraphics[width=\textwidth]{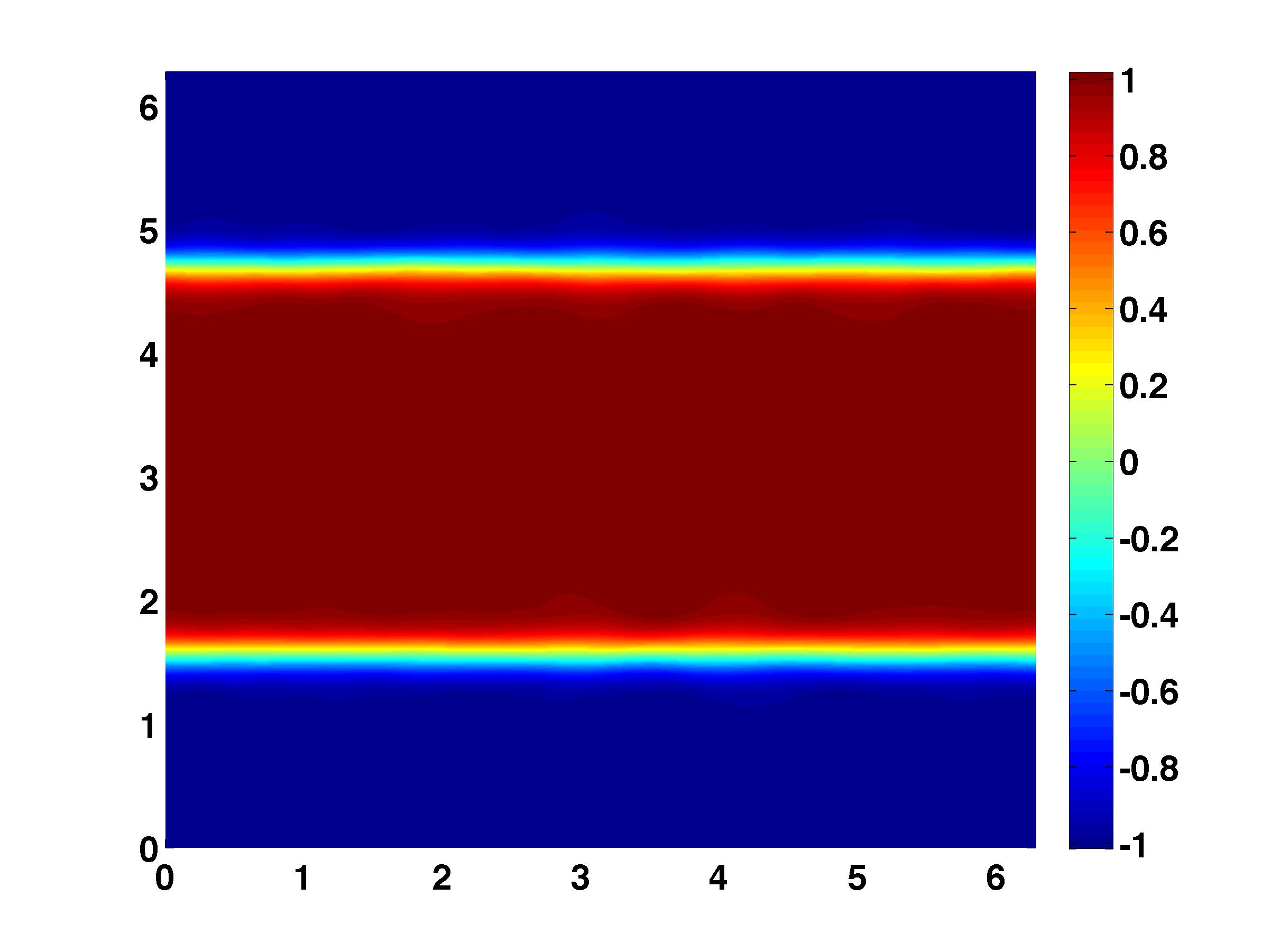}
	\end{subfigure}
	\caption{Flat vortex sheet: Convergence of mean of the first component of the velocity field at time $t=2$ wrt N (number of Fourier modes). Top left $N=128$, Top right $N=256$, Bottom left $N=512$ and Bottom right $N=1024$.}
	\label{fig:7a}
\end{figure}

\begin{figure}
	\centering
	\begin{subfigure}{.49\textwidth}
	\includegraphics[width=\textwidth]{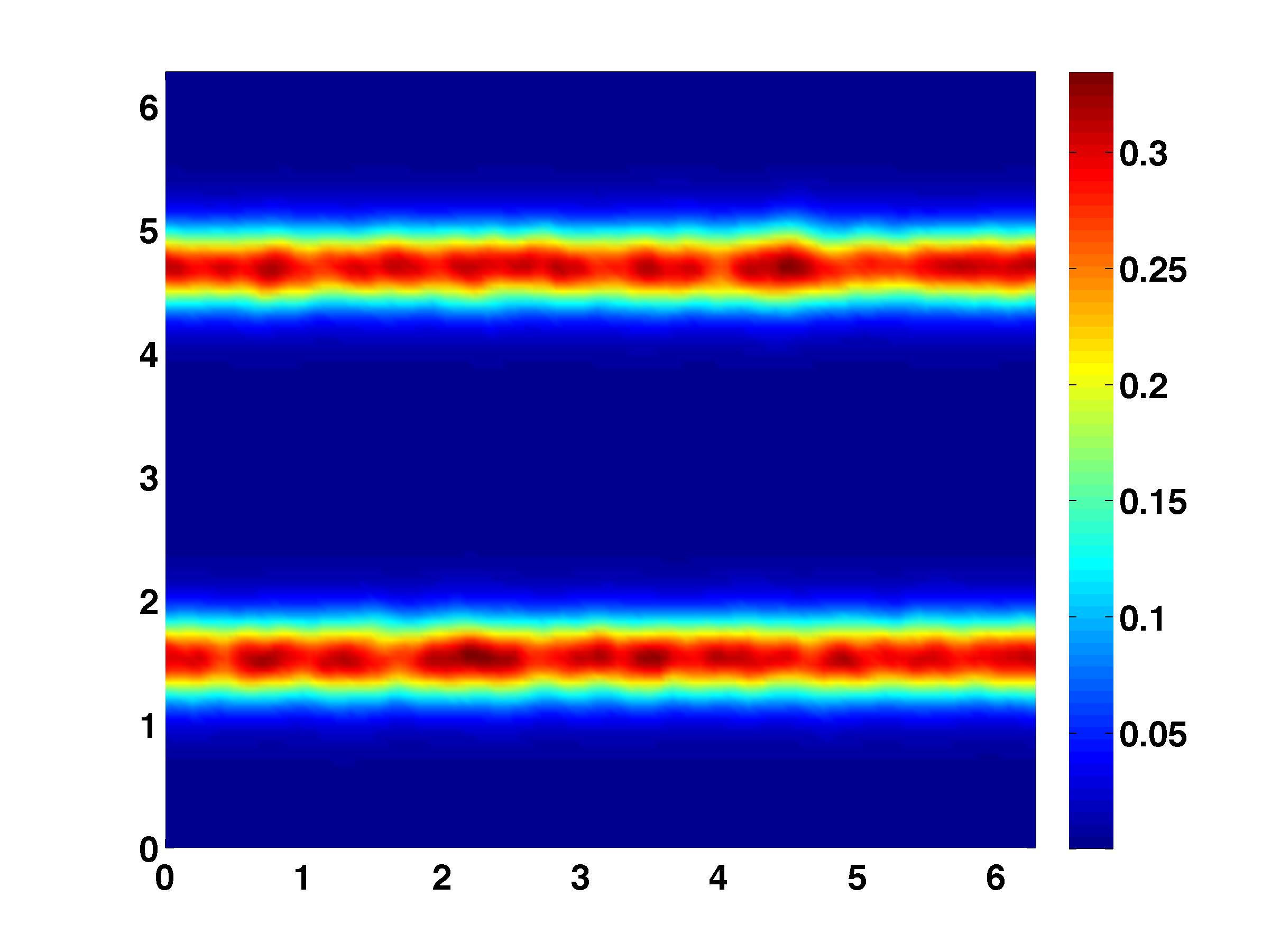}
	\end{subfigure}
	\begin{subfigure}{.49\textwidth}
	\includegraphics[width=\textwidth]{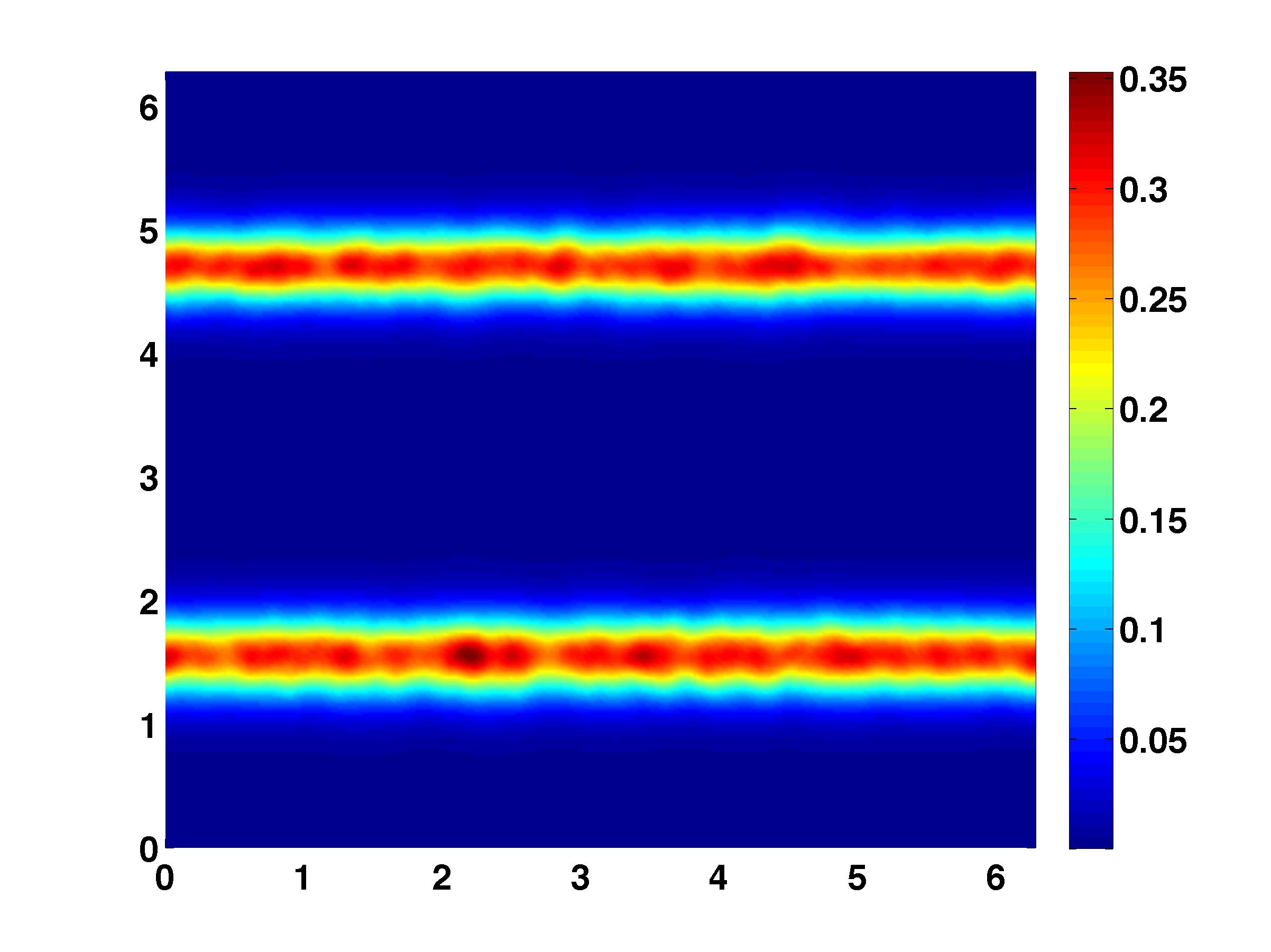}
	\end{subfigure}\\
	\begin{subfigure}{.49\textwidth}
	\includegraphics[width=\textwidth]{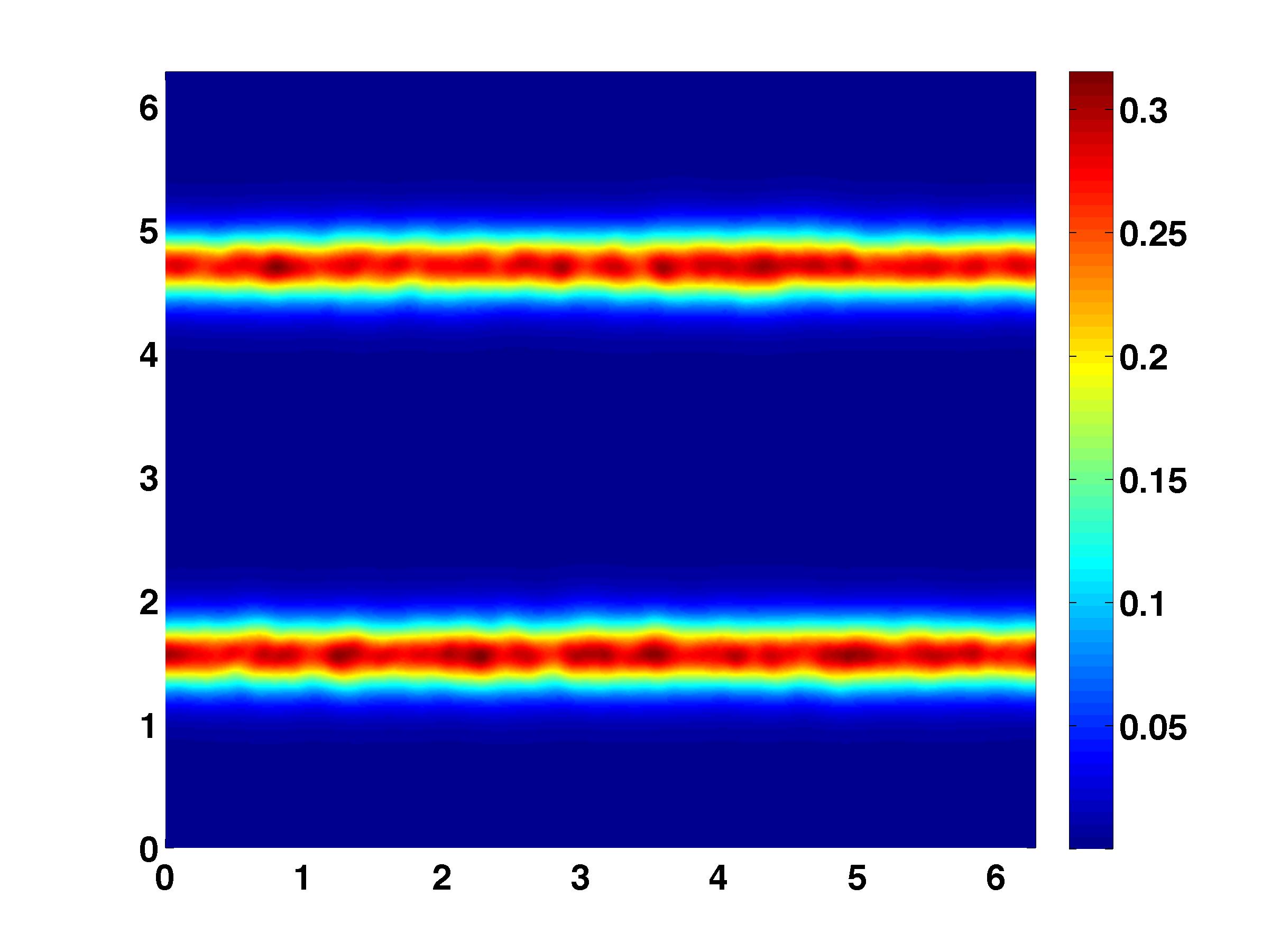}
	\end{subfigure}
	\begin{subfigure}{.49\textwidth}
	\includegraphics[width=\textwidth]{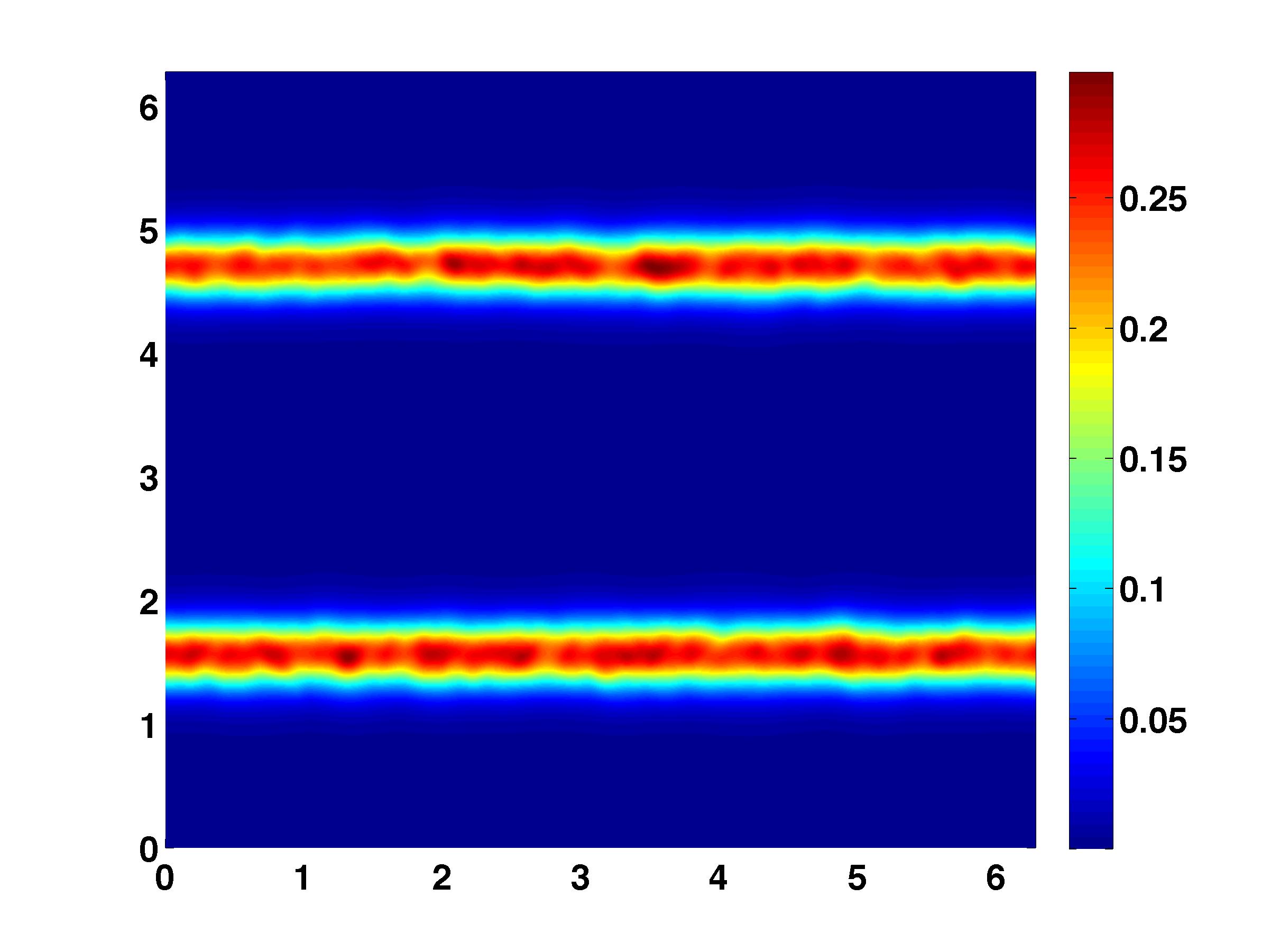}
	\end{subfigure}
	\caption{Flat vortex sheet: Convergence of second moment $\xi_2\xi_2$ of the velocity field at time $t=2$ wrt N (number of Fourier modes). Top left $N=128$, Top right $N=256$, Bottom left $N=512$ and Bottom right $N=1024$.}
	\label{fig:7b}
\end{figure}

\begin{figure}
\begin{subfigure}{.45\textwidth}
\includegraphics[width=\textwidth]{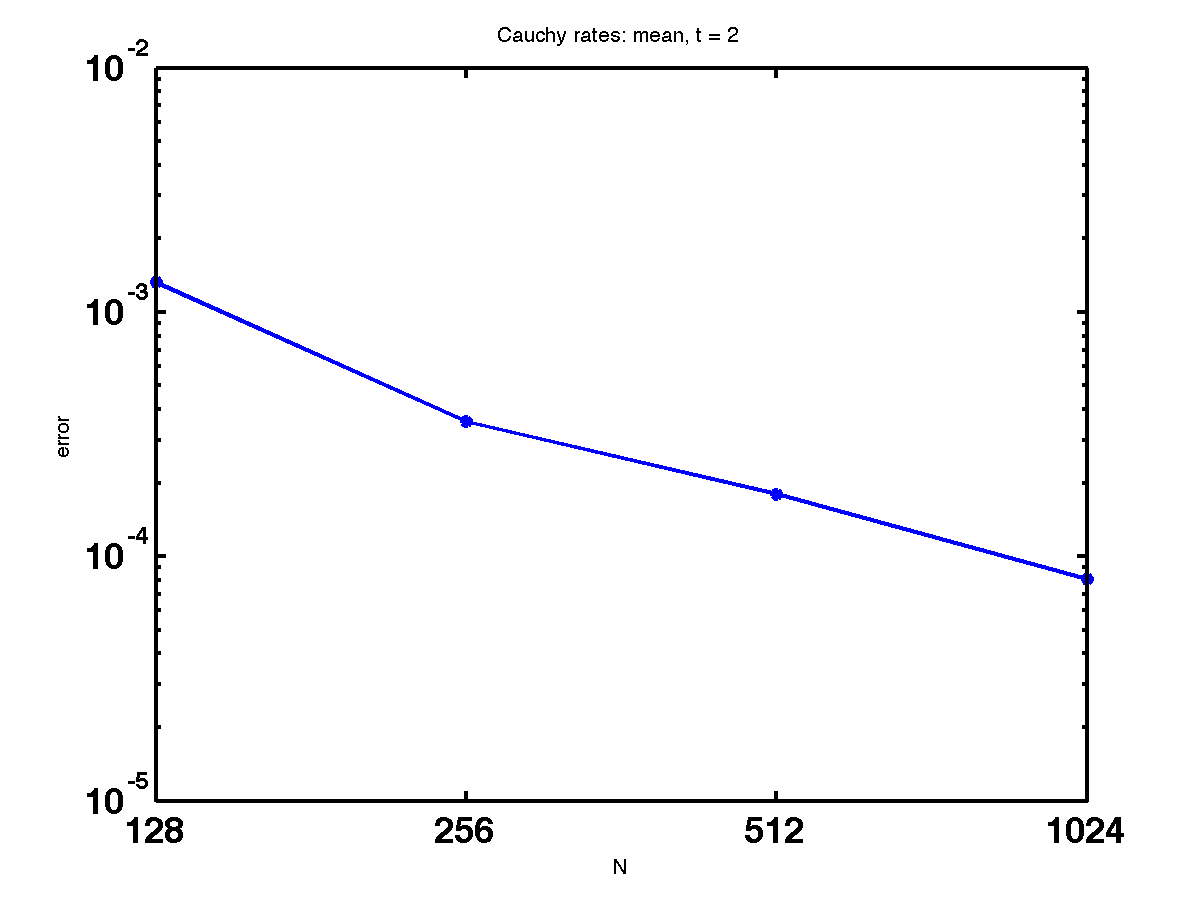}
\caption{Mean}
\end{subfigure}
\begin{subfigure}{.45\textwidth}
\includegraphics[width=\textwidth]{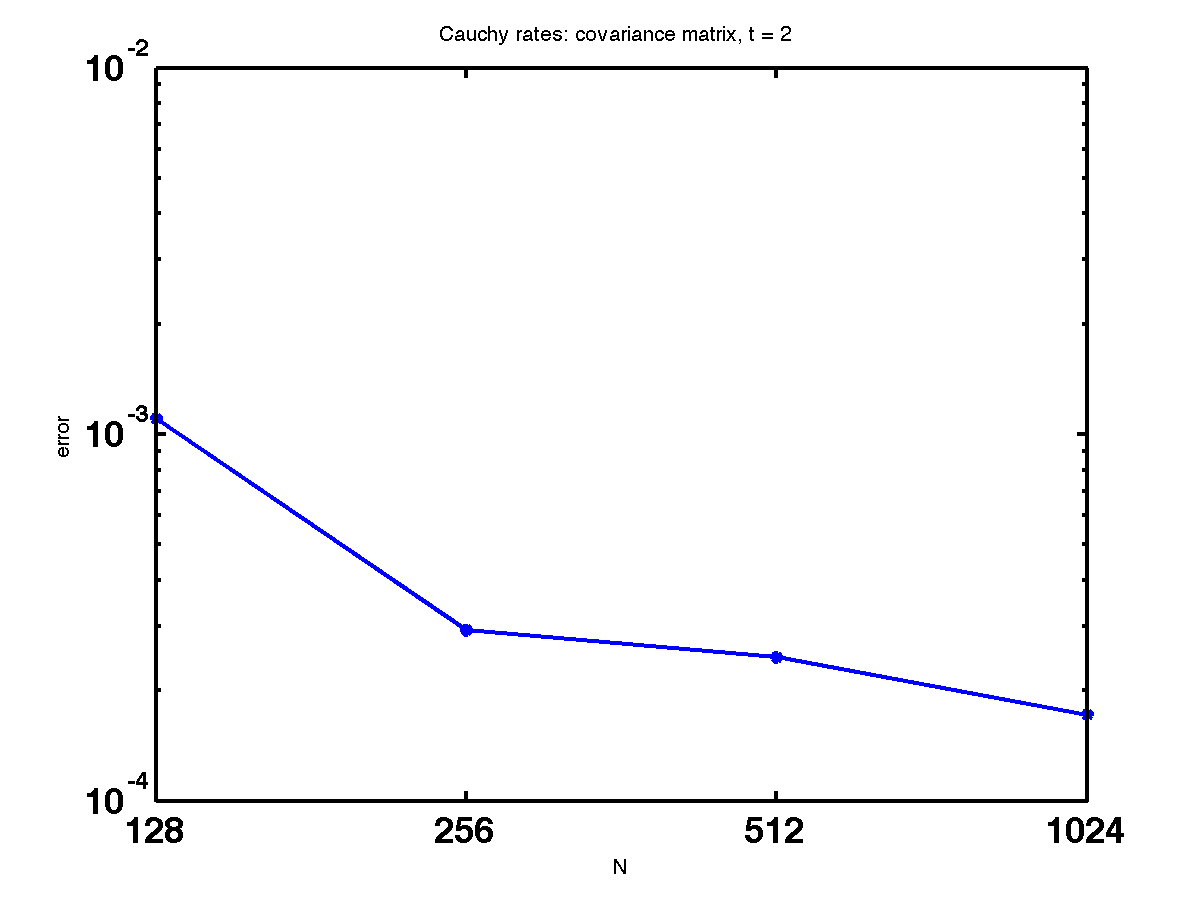}
\caption{Second-moment}
\end{subfigure}
\caption{Flat vortex sheet. Cauchy rates wrt N left (mean) right (second moment)}
	\label{fig:7c}
\end{figure}

Given that the flat vortex sheet corresponds to an initial atomic young measure, the final step of algorithm \ref{alg1} consists of letting the perturbation parameter $\delta \rightarrow 0$. For this purpose, we fix $N=512$ and consider approximate Young measures $\nu^{\rho,\delta}_N$ for successively smaller values of $\delta$. The results for the mean of the first component of the velocity field and the second moment of the second component of the velocity field, plotted in figures \ref{fig:9a} and \ref{fig:9b}, show that these statistical quantifies also converge with decreasing perturbation amplitude. This convergence is also verified in figure \ref{fig:9}, where successive differences of the mean and the second moment in $L^2$ are displayed. 

\begin{figure}
 \begin{subfigure}[b]{.45\textwidth}
    \includegraphics[width = \textwidth]{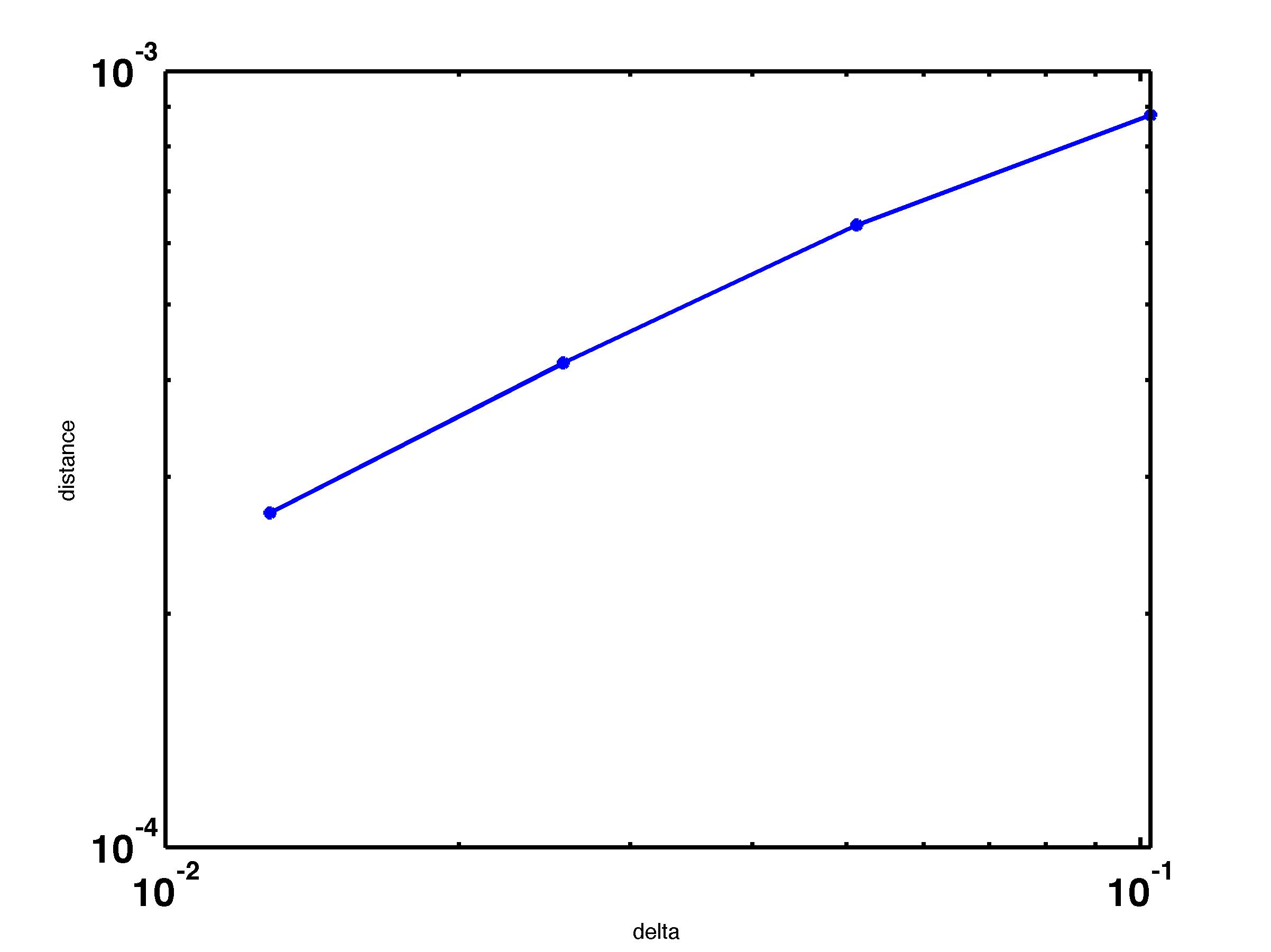}
    \caption{mean}
    \end{subfigure}
    \begin{subfigure}[b]{.45\textwidth}
    \includegraphics[width = \textwidth]{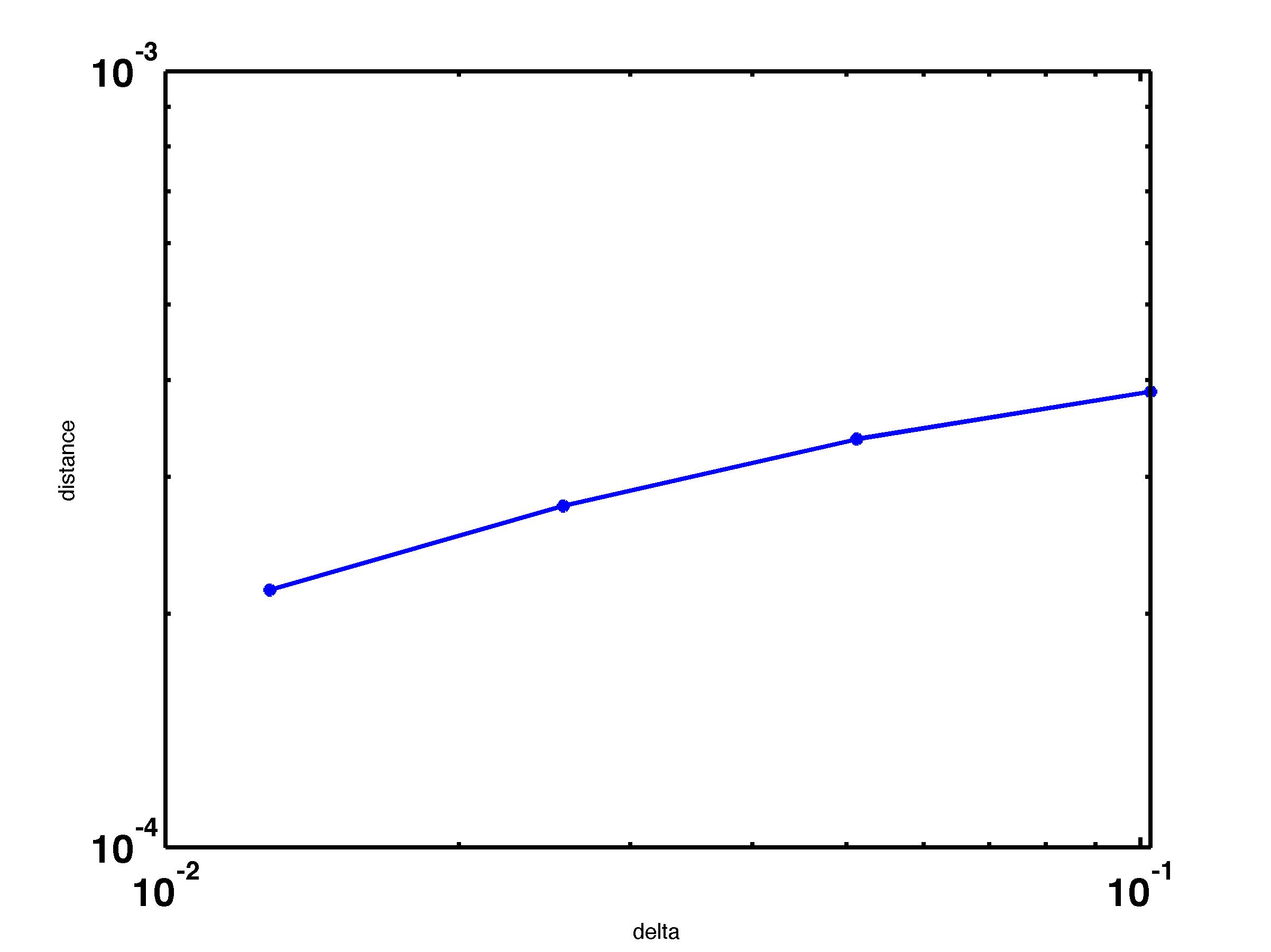}
    \caption{$(x_2x_2)$--second moment}
    \end{subfigure}
		\caption{Flat vortex sheet: Cauchy rates wrt $\delta$ left (mean) right (second moment)}
	\label{fig:9}
\end{figure}

\begin{figure}
	\centering
	\begin{subfigure}{.49\textwidth}
	\includegraphics[width=\textwidth]{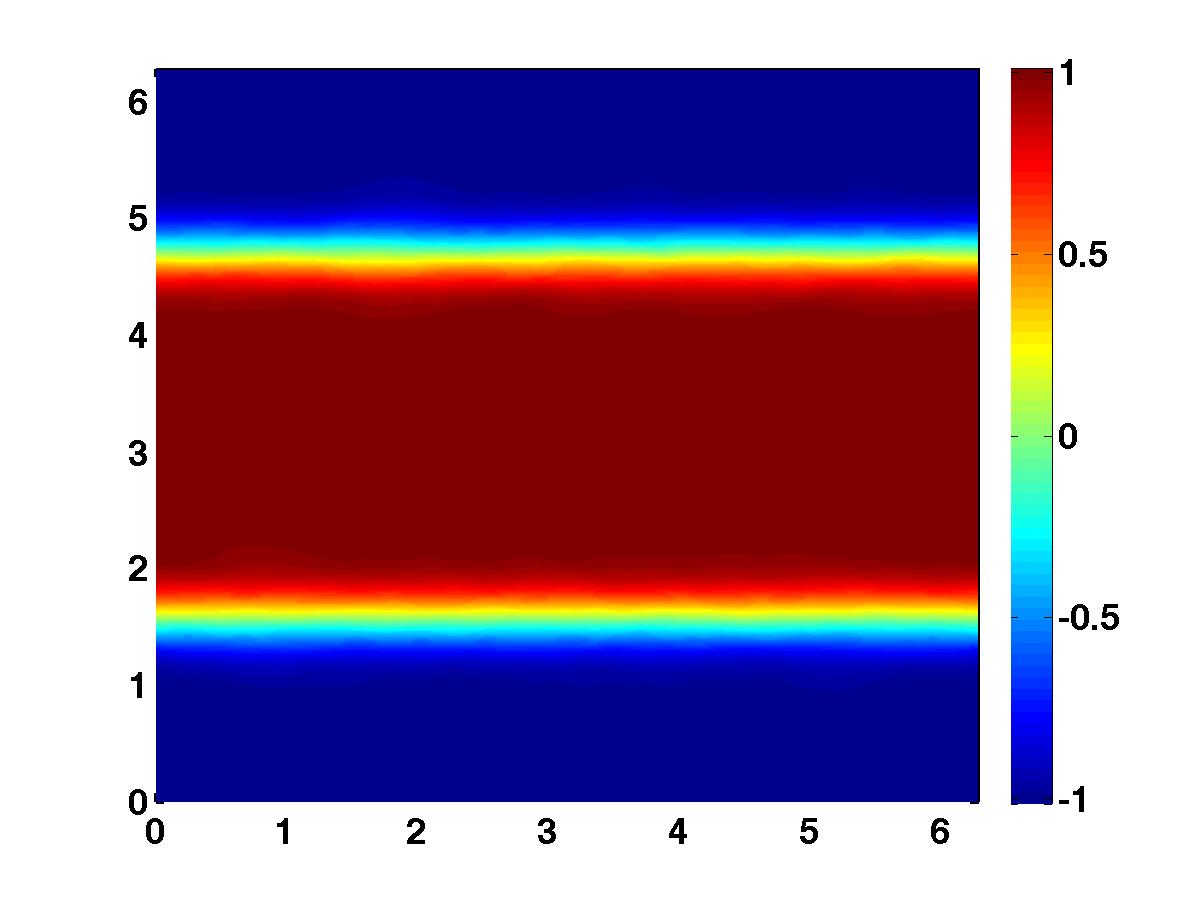}
	\end{subfigure}
	\begin{subfigure}{.49\textwidth}
	\includegraphics[width=\textwidth]{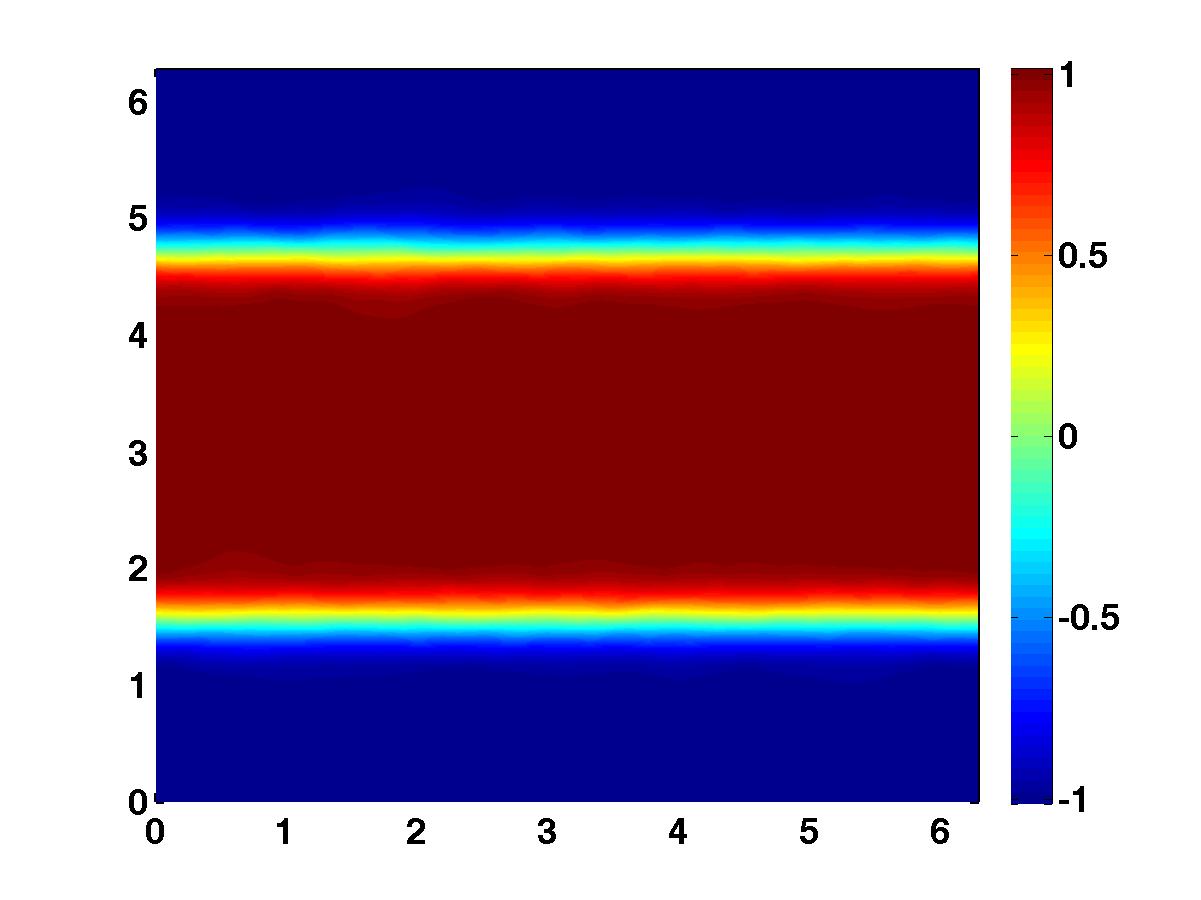}
	\end{subfigure}\\
	\begin{subfigure}{.49\textwidth}
	\includegraphics[width=\textwidth]{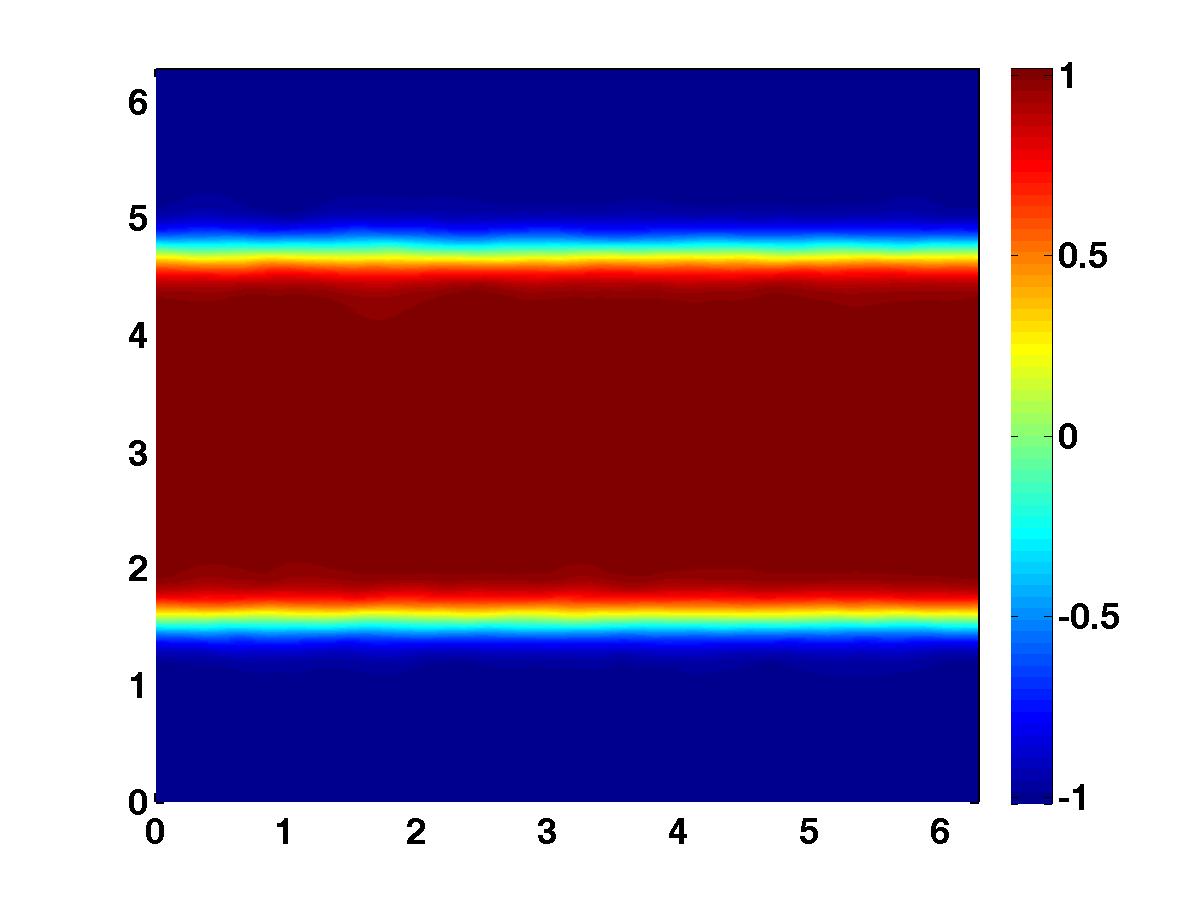}
	\end{subfigure}
	\begin{subfigure}{.49\textwidth}
	\includegraphics[width=\textwidth]{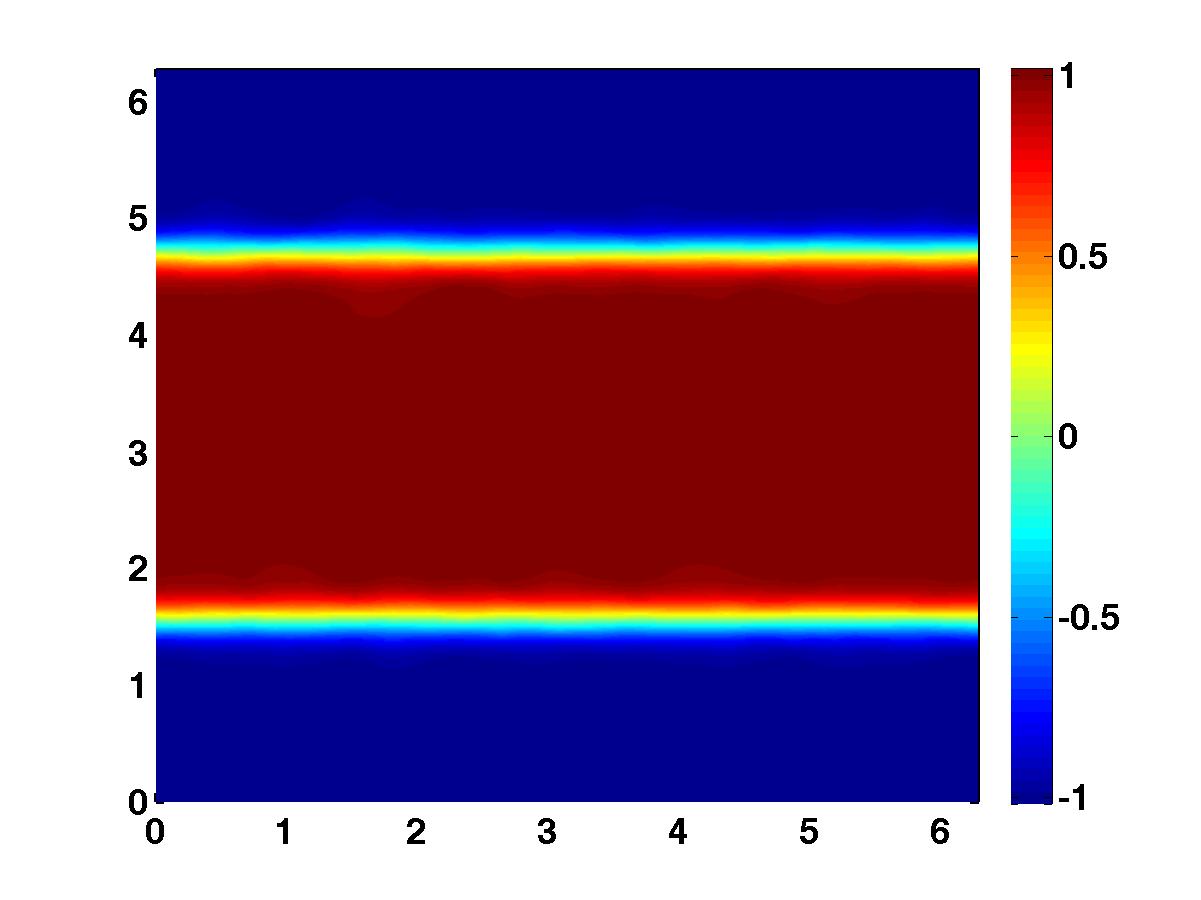}
	\end{subfigure}
	\caption{Flat vortex sheet: Convergence of mean of the first component of the velocity field at time $t=2$ wrt $\delta$ (perturbation parameter). Top left $\delta=0.1024$, Top right $\delta=0.0512$, Bottom left $\delta = 0.0256$ and Bottom right $\delta=0.0128$.}
	\label{fig:9a}
\end{figure}

\begin{figure}
	\centering
	\begin{subfigure}{.49\textwidth}
	\includegraphics[width=\textwidth]{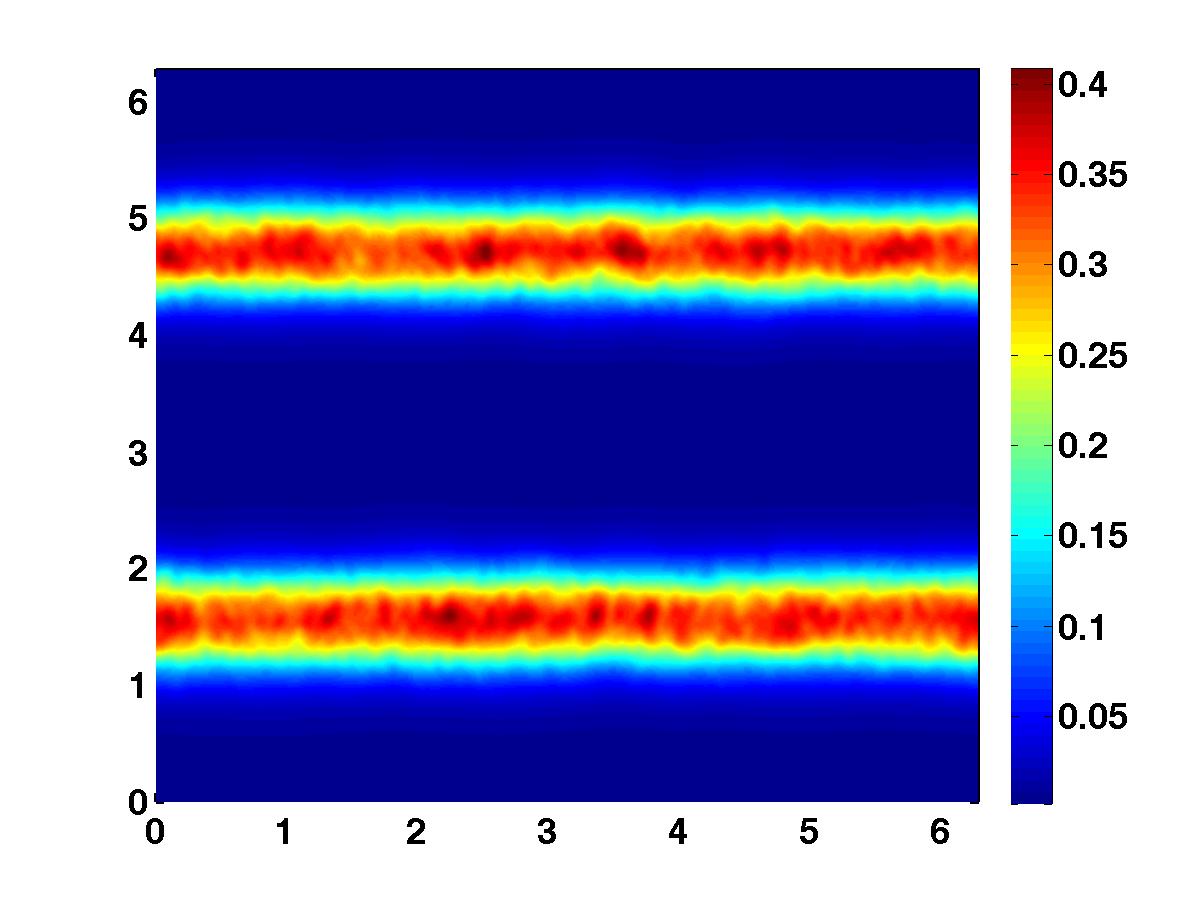}
	\end{subfigure}
	\begin{subfigure}{.49\textwidth}
	\includegraphics[width=\textwidth]{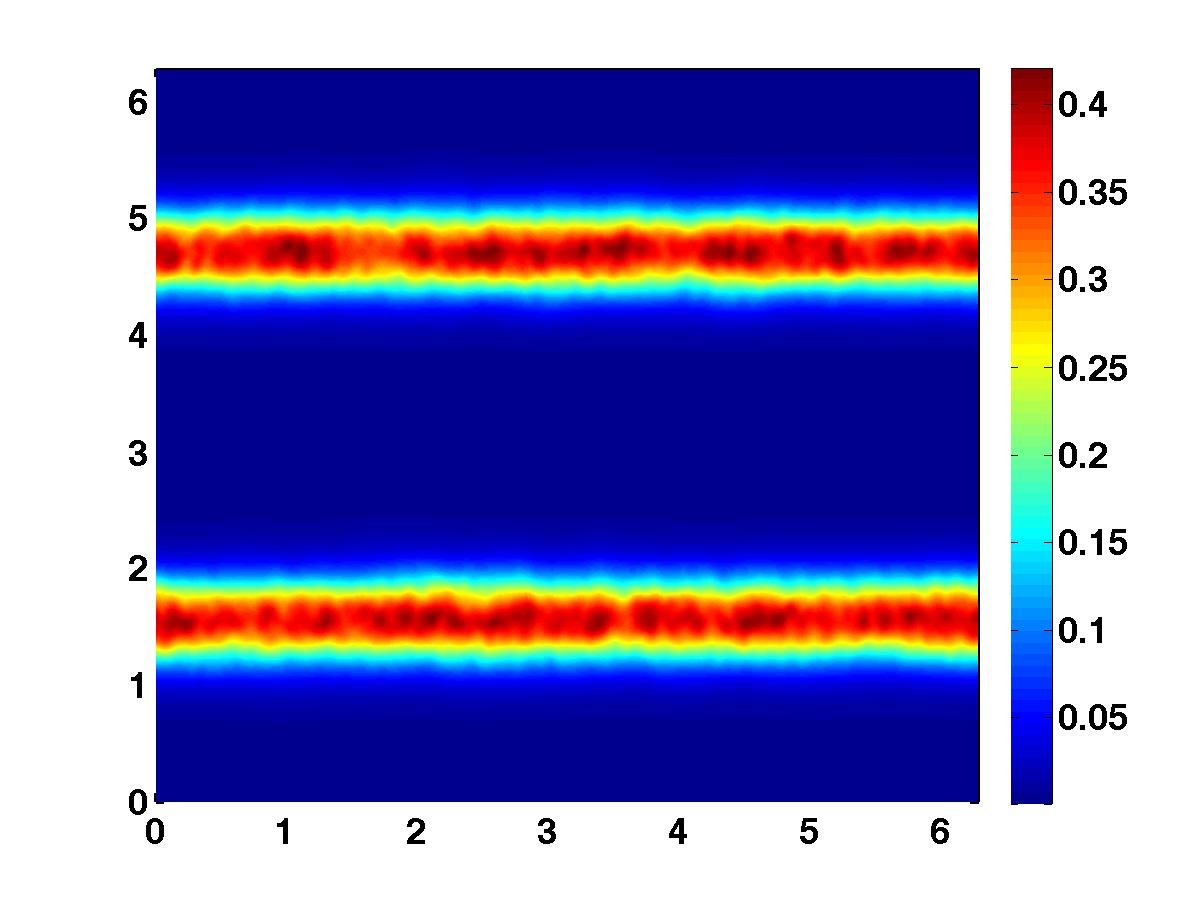}
	\end{subfigure}\\
	\begin{subfigure}{.49\textwidth}
	\includegraphics[width=\textwidth]{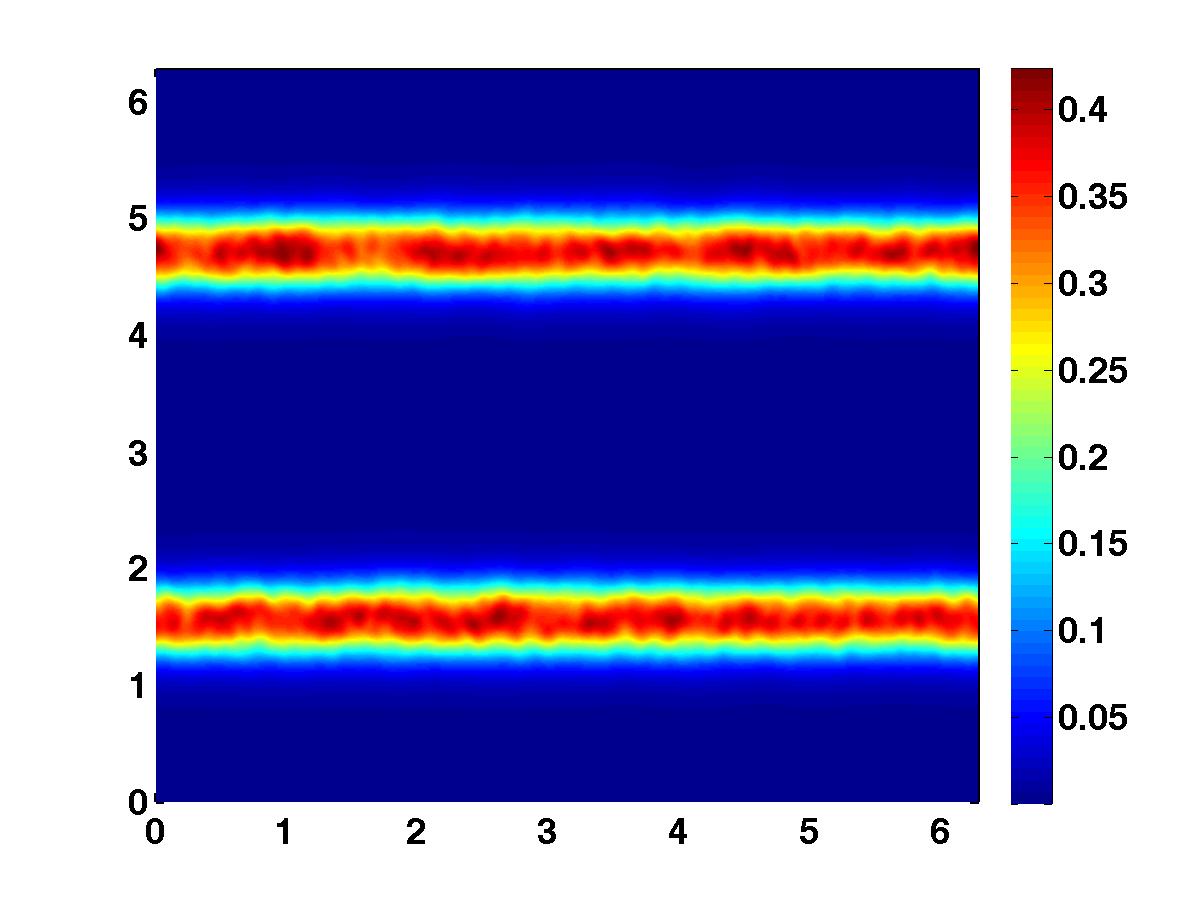}
	\end{subfigure}
	\begin{subfigure}{.49\textwidth}
	\includegraphics[width=\textwidth]{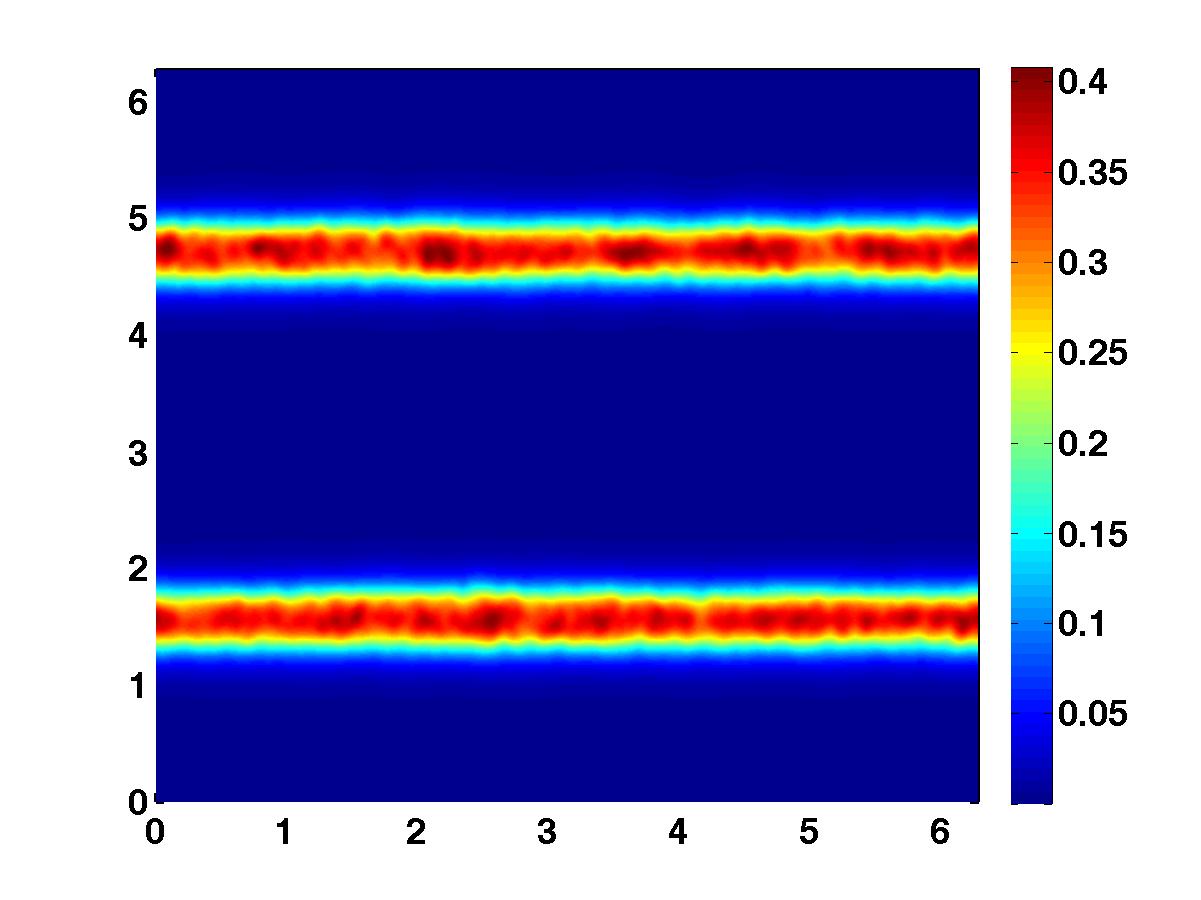}
	\end{subfigure}
	\caption{Flat vortex sheet: Convergence of second-moment of the second component of the velocity field at time $t=2$ wrt $\delta$ (perturbation parameter). Top left $\delta=0.1024$, Top right $\delta=0.0512$, Bottom left $\delta = 0.0256$ and Bottom right $\delta=0.0128$.}
	\label{fig:9b}
\end{figure}
\begin{figure}
\begin{subfigure}[b]{.4\textwidth}
\includegraphics[width=\textwidth]{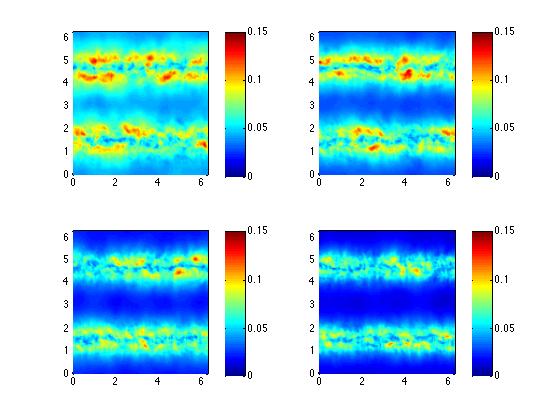}
\caption{distribution in space, $t = 4$, $\delta \to 0$ }
\end{subfigure}
\begin{subfigure}[b]{.4\textwidth}
\includegraphics[width=\textwidth]{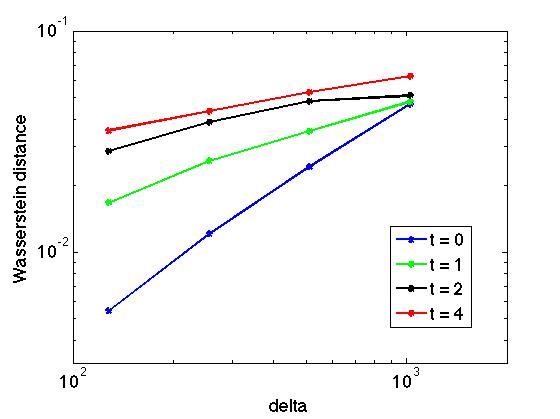}
\caption{Cauchy rates in the mean}
\end{subfigure}
\caption{Cauchy rates in the Wasserstein distance $\mathcal W_1$}
\label{wasserstein}
\end{figure}

The convergence results for statistical quantities such as the mean and the second moment, with respect of the resolution as well as the perturbation parameter, are consistent with the prediction of narrow convergence in Theorem \ref{thm:alphaconv}. Is the convergence even stronger than the predicted narrow convergence ? To examine this question, we follow \cite{FKMT1} and compute the Wasserstein distance for $\nu^{\rho,\delta}_{x,t}$ as probability measures in phase space. Again, we have computed the $1$-Wasserstein distance between successive approximations $\delta$ vs. $\delta/2$, as $\delta \to 0$. 
The results are shown in Figure \ref{wasserstein}. (A) displays the pointwise values $\mathcal W_1(\nu^{\rho,\delta}_{x,t}, \nu^{\rho,\delta/2})$, while (B) is a plot of the mean rates  
\[\int \mathcal W_1(\nu^{\rho,\delta}_{x,t}, \nu_{x,t}^{\rho,\delta/2})\dx,\]
at different times $t=0,1,2,4$.

Unexpectedly, We observe convergence even in the much stronger Wasserstein metric. This type of strong convergence was also observed in the context of compressible Euler equations of gas dynamics in \cite{FKMT1}. 
\section{Further properties of the vortex sheet}
\label{sec:vs}
In this section, we will investigate the above described computed (admissible) measure valued solution of the Euler equations, corresponding to the flat vortex sheet data \eqref{initdata} in considerable detail. To begin with, we can fix the smoothing parameter $\rho > 0$ and the perturbation parameter $\delta$ and let the number of Fourier modes $N \rightarrow \infty$. Numerical results, presented in figures \ref{fig:7c}, show that the approximation converge to a Young measure $\nu^{\rho,\delta}$. In fact, one can also realize $\nu^{\rho,\delta}$ as the law of the random field $X_{\rho,\delta}$ which corresponds to the (path-wise) solution of the Euler equations with smooth initial data $X^0_{\rho,\delta}(\omega)$. We summarize this fact and some other interesting analytical properties of the limit measure $\nu^{\rho,\delta}$ below.
\begin{theorem}\label{atomicuniqueness}
For all values of $\rho, \delta$, the measure-valued solution $\nu^{\rho, \delta}$ has the following properties.
\begin{itemize}
\item $\nu^{\rho, \delta}$ is translationally invariant with respect to the $x_1$-direction, i.e. we have 
\[\nu^{\rho, \delta}_{x_1,x_2,t} = \nu^{\rho, \delta}_{x_1+h,x_2,t} \]
for any $h \in \bbR$ and $(x_1,x_2,t) \in \bbT^2\times \bbR_+$.
\item The mean $\overline \nu^{\rho,\delta}= \langle \nu^{\rho, \delta}, \xi\rangle$ has vanishing second component.
\item If $\nu^{\rho, \delta}$ is atomic, then it is stationary. 
\item For each fixed $\omega \in \Omega$, the realizations $X_{\rho,\delta}(\omega)$ are smooth solutions to the Euler equations with $X_{\rho,\delta}^0(\omega)$ smooth initial data, such that $X_{\rho,\delta}^0(\omega) \to v^0$ in $L^2$ as $\rho, \delta \to 0$. Moreover, we have a uniform bound on the vorticity in the $H^{-1}$ norm. 
\end{itemize}

All of these properties -- except for the smoothness of the random fields $X_{\rho,\delta}$ -- also hold for any limiting measure $\nu^{\rho,\delta}_{x,t} \wsto \nu_{x,t} $, obtained in the limit $\rho,\delta \to 0$, i.e. we are allowed to formally set $\delta = \rho = 0$.
\end{theorem}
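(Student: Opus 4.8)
The plan is to exploit that $\nu^{\rho,\delta}$ is the law of the random field $X_{\rho,\delta}$, which is the (pathwise, $N\to\infty$) image of the random initial datum $X^0_{\rho,\delta}$ under the solution operator of the spectral viscosity scheme \eqref{spEQ}. The backbone of everything is \emph{equivariance}: both the Euler equations and the scheme \eqref{spEQ} commute with (a) the translations $T_h:(x_1,x_2)\mapsto(x_1+h,x_2)$ in the $x_1$-direction, and (b) the reflection $S$ defined by $(Sw)(x_1,x_2)=\bigl(w_1(x_1,2\pi-x_2),-w_2(x_1,2\pi-x_2)\bigr)$. Each of these commutes with Fourier truncation (as $|k|$ is preserved) and with the Leray projection $\BP_N$, so the discrete solution operator is $T_h$- and $S$-equivariant, and hence any invariance of the \emph{law} of $X^0_{\rho,\delta}$ is inherited by $\nu^{\rho,\delta}$. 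Once the four properties are secured for $\rho,\delta>0$, I would verify that all of them except smoothness are closed under the weak-$*$ limit $\rho,\delta\to0$.

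For translational invariance, observe that $p_\delta(x_1+h)=\sum_k\alpha_k\sin(kx_1-(\beta_k-kh))$, so shifting $x_1$ merely replaces the phases $\beta_k$ by $\beta_k-kh\bmod 2\pi$; since the $\beta_k$ are i.i.d. uniform on $[0,2\pi)$ this leaves the joint law unchanged, so $T_hX^0_{\rho,\delta}\stackrel{d}{=}X^0_{\rho,\delta}$ and, by equivariance, $\nu^{\rho,\delta}$ is $x_1$-translation invariant. For the vanishing second component of the mean I would combine three observations: (i) by translational invariance the mean $\overline\nu^{\rho,\delta}=\langle\nu^{\rho,\delta},\xi\rangle$ is independent of $x_1$; (ii) each realization is divergence-free, hence so is the mean, and $\partial_{x_1}\overline\nu_1=0$ then forces $\partial_{x_2}\overline\nu_2=0$, i.e. $\overline\nu_2=\overline\nu_2(t)$; (iii) the unperturbed profile $\pi_1 v^0_\rho$ is even about $x_2=\pi$, so $S$ sends the datum built from $p_\delta$ to the datum built from $-p_\delta$, and since $-p_\delta$ has the same law as $p_\delta$ (phase shift $\beta_k\mapsto\beta_k-\pi$), the law of $X^0_{\rho,\delta}$ is $S$-invariant. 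Equivariance then makes $\nu^{\rho,\delta}$ $S$-invariant, so $\overline\nu_2$ is odd about $x_2=\pi$; together with (ii) this yields $\overline\nu_2\equiv0$.

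Suppose next that $\nu^{\rho,\delta}=\delta_{v}$ is atomic (so $\lambda=0$ and $v$ is a weak solution of \eqref{EQ}). Then translational invariance forces $v=v(x_2,t)$ and the previous step forces $v_2\equiv0$, so $v=(v_1(x_2,t),0)$ is a shear flow and $\langle\nu,\xi\otimes\xi\rangle=v\otimes v$ has only the $(1,1)$-entry $v_1^2$, which is independent of $x_1$. Testing the momentum identity \eqref{MVS} against a divergence-free $\phi$, the nonlinear contribution is $\int v_1^2\,\partial_{x_1}\phi_1\dx\dt$, which vanishes after integration in $x_1$; thus \eqref{MVS} collapses to $\int v\cdot\partial_t\phi\dx\dt+\int v_0\cdot\phi(\cdot,0)\dx=0$ for all divergence-free $\phi$. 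This says $\langle v(t),\phi\rangle=\langle v_0,\phi\rangle$ for a.e.\ $t$ and all divergence-free $\phi$; since $v(t)$ and $v_0$ are divergence-free, $v(t)=v_0$, i.e. $\nu^{\rho,\delta}$ is stationary. Crucially, this argument assumes no regularity on $v_1$, so it applies verbatim to the limit measure.

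For the last item, smoothness of $X_{\rho,\delta}(\omega)$ follows for $\rho>0$ from smoothness of $X^0_{\rho,\delta}(\omega)$ (the $\tanh$-profile, the trigonometric perturbation, and $\BP$ are all smooth) together with global well-posedness of the $2$D Euler equations for smooth data and convergence of the scheme in this regime; the convergence $X^0_{\rho,\delta}(\omega)\to v^0$ in $L^2$ uses $\tanh(\cdot/\rho)\to\mathrm{sign}$ in $L^2$ and $\|p_\delta\|_{L^\infty}\le\sqrt{K\delta}\to0$, with $\BP$ continuous on $L^2$; and the uniform vorticity bound is immediate from the energy estimate, since $\|\curl X_{\rho,\delta}(\omega)(t)\|_{H^{-1}}\le C\|X_{\rho,\delta}(\omega)(t)\|_{L^2}\le C\|X^0_{\rho,\delta}(\omega)\|_{L^2}\le C'$ uniformly in $\rho,\delta,\omega,t$ by Lemma \ref{est}. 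The passage to the limit is then routine: translation- and reflection-invariance are closed under weak-$*$ convergence (test against the transformed test functions), the means converge because $g(\xi)=\xi$ is admissible with $g^\infty\equiv0$, the $H^{-1}$ bound passes to the limit by lower semicontinuity, and Property 3 used only Properties 1, 2 and the MVS identity. The step demanding the most care is the rigorous transfer of the two symmetries through \emph{both} limits — checking that \eqref{spEQ} really commutes with $T_h$ and $S$ in Fourier space, that the law of $X^0_{\rho,\delta}$ is \emph{exactly} invariant, and that these invariances survive the $N\to\infty$ limit defining $\nu^{\rho,\delta}$ and then the weak-$*$ limit defining $\nu$; the conceptually key (but structurally clean) point is recognizing that for a shear flow the quadratic flux drops out of the weak formulation, forcing stationarity.
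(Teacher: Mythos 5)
Your proposal is correct, and for properties (1), (3) and (4) it follows essentially the same route as the paper: translation invariance of the law of $p_\delta$ via the phase shift $\beta_k\mapsto\beta_k-kh$, stationarity of an atomic shear flow, and the mollification/Leray-projection estimates giving $\Vert v^0-X^0_{\rho,\delta}\Vert_{L^2}=O(\rho+\delta)$ together with the $H^{-1}$ vorticity bound from the energy estimate. (Your fleshing-out of property (3) — that for $v=(v_1(x_2,t),0)$ the flux term $\int v_1^2\,\partial_{x_1}\phi_1$ integrates to zero in $x_1$, collapsing the weak formulation to $\partial_t v=0$ — is exactly the content the paper dismisses as ``straightforward to check.'') The one genuinely different step is property (2): the paper passes to the mean vorticity $\overline\eta^{\rho,\delta}$, notes $\partial_{x_1}\overline\eta^{\rho,\delta}=0$ by property (1), and concludes directly from the Biot--Savart relation $\overline\nu^{\rho,\delta}_2=\Delta^{-1}\partial_{x_1}\overline\eta^{\rho,\delta}=0$; you instead combine $x_1$-independence with incompressibility to make $\overline\nu_2$ spatially constant and then kill the constant with the reflection symmetry $S$ about $x_2=\pi$. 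Both arguments are valid (the scheme and the Leray projection are indeed $S$-equivariant, and $-p_\delta$ has the same law as $p_\delta$); the paper's is shorter, while yours avoids inverting the Laplacian on the mean and, once you have established that $\overline\nu_2$ is constant, you could even dispense with the reflection by invoking conservation of momentum ($\widehat v_0=0$), which forces the constant to vanish. Your explicit justification of the passage to the limit $\rho,\delta\to0$ (testing invariances against transformed test functions, admissibility of $g(\xi)=\xi$ with $g^\infty\equiv0$, lower semicontinuity of the $H^{-1}$ bound) supplies detail the paper omits.
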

\begin{proof}
We start with the proof of property (1). The statistics of the perturbation ansatz for each interface 
$$p_\delta(x) = \sum_{k=1}^{N} \alpha_k \sin(kx_1-\beta_k)$$
 is invariant with respect to translation in the $x_1$-direction. For any $h\in \bbR$, the values $\beta_1-h/1,\; \dots,\; \beta_N-h/N$ have the same probability of occuring as $\beta_1,\; \dots,\; \beta_N$. Hence 
 $$\mathrm{Prob}[p_\delta(x+he_1) \in A] =\mathrm{Prob}[p_\delta(x) \in A]$$
  for any measurable set $A$ and any $h\in \bbR$. We obtain equality of the law 
  $$\cL(v^0_{\rho}(x_1,x_2-p_\delta(x_1))) = \cL(v^0_{\rho}(x_1+h,x_2-p_\delta(x_1+h)))$$
   and hence of the initial data 
   $$\cL(X^0_{\rho,\delta}(x_1,x_2)) = \cL(X^0_{\rho,\delta}(x_1+h,x_2)).$$
   Finally, because the Euler equations are translation-invariant, it follows that 
$$\nu^{\rho,\delta}_{x_1,x_2,t} = \cL(X_{\rho,\delta}(x_1,x_2)) = \cL(X_{\rho,\delta}(x_1+h,x_2)) = \nu^{\rho,\delta}_{x_1+h,x_2,t}.$$

To prove (2), we proceed as follows. Let $\eta^{\rho,\delta}$ be the vorticity corresponding to the random field $X_{\rho,\delta}$. Taking the mean and interchanging integration and differentiation, we see that $\overline \eta^{\rho,\delta}$ is the vorticity corresponding to the mean $\overline \nu^{\rho,\delta}$. By property (1), the mean is independent of $x_1$.  The same must be true of the mean vorticity  $\overline \eta^{\rho,\delta}$, i.e. we have $\partial_{x_1} \overline \eta^{\rho,\delta} = 0$. It follows that also for the second component of $\overline \nu^{\rho,\delta}$, we have 
$$\overline \nu^{\rho,\delta}_2 = \Delta^{-1} \partial_{x_1} \overline \eta^{\rho,\delta} = 0.$$

We come to property (3). Assume that $\nu^{\rho,\delta}_{x,t} = \delta_{v(x,t)}$. By property (2), the second component of the mean $\overline \nu^{\rho,\delta}_{x,t} = v(x,t)$ vanishes, i.e. $v_2= 0$. Furthermore, $v$ is independent of $x_1$ by property (1). It is straightforward to check that these two observations imply that $v$ is a stationary solution.

We recall that the Leray projection is an orthogonal $L^2$ projection $(\ast)$ and that $X_{\rho,\delta}^0(\omega)$ is obtained in three steps $(\ast\ast)$: In the first step, the intital datum $v^0$ is mollified to obtain a smooth field $v^0_\rho$. In a second step, we determine a random perturbation of the interfaces $p_\delta(x)$, which yields a field $v^0_{\rho,\delta}(x) = v^0_\rho(x-p_\delta(x))$. In the last step, we project this field to the space of divergence-free vector fields using the Leray projection to obtain $X_{\rho,\delta}^0(\omega)$. In particular, we find that
\[\Vert v^0 - X^0_{\rho,\delta}\Vert_{L^2} \overset{(\ast)}{\le} \Vert v^0 - v^0_{\rho,\delta}\Vert_{L^2}  \overset{(\ast\ast)}{\le} \Vert v^0 - v^0_{\rho}\Vert_{L^2}+ \Vert v^0_\rho - v^0_{\rho,\delta}\Vert_{L^2}  \]
Next, note that we have uniform $L^\infty$ bounds $\Vert v^0\Vert_{L^\infty}, \, \Vert v^0_\rho\Vert_{L^\infty}, \, \Vert v^0_\delta\Vert_{L^\infty}\le 1$ and that all of these fields are pointwise $= \pm e_1$, except in a region with width of order $O(\delta + \rho)$. We conclude that $\Vert v^0 - X^0_{\rho,\delta}\Vert_{L^2}  \le O(\rho+\delta)$. Finally, a uniform $L^2$ bound on a vector field implies a uniform $H^{-1}$ bound on its vorticity. This concludes the proof.
\end{proof}
\subsection{Non-atomicity of the limit measure valued solution}
\label{sec:nonatomic}
\subsubsection{Non-zero variance}
One of the most important questions concerning the measure valued solution realized as a limit of the approximations computed using Algorithm \ref{alg1} applied to the flat vortex sheet initial data \eqref{initdata} is whether this measure is atomic or not, i.e. whether the limit measure valued solution is a weak solution of the Euler equations \eqref{EQ}? To answer this question, we focus on the variance of the computed approximations. By property (2) of theorem \ref{atomicuniqueness}, we see that for a fixed $\rho,\delta$, the computed Young measures will be invariant in the $x_1$-direction. We fix $N=512$, and present a $x_1 = \mathrm{const}$ slice of the mean and the variance of the velocity field $v_1$ in the $x_2$ direction for different values of $\delta$. The results shown in figure \ref{fig:10} demonstrate that there is convergence as $\delta \rightarrow 0$. Furthermore, the mean, as $\delta$ is reduced, does not coincide with the initial velocity discontinuity. The variance is also very different from zero, at least along two patches (symmetric with respect to $x_2 = \pi$). We denote these two patches as the \emph{turbulence zone}. This is the first indication that the computed measure valued solution is \emph{not atomic}. 
\subsubsection{Spread of the turbulence zone in time}
To further test the issue of atomicity of the limit measure, we use property (3) of theorem \ref{atomicuniqueness}. This property provides a clear criterion for atomicity i.e, if the limit measure is atomic, then it must be stationary  (coincide with the initial flat vortex sheet \eqref{initdata}). We investigate the stationarity of the limit measure by considering the time dependent map for (the spatial average of) the variance,
\begin{equation}
t \mapsto \int_{\bbT^2} {\rm Var} \left(\nu^{\rho,\delta}_{x,t}\right)\dx,
\label{eq:vart}
\end{equation}
as $\delta \to 0$. Given the fact that the variance is non-zero only in the turbulence zone, we can interpret the above quantity as the mean spreading rate (in time) of the turbulence zone. In figure \ref{fig:12}, we show how the zone spreads in time with respect to different values of $\delta$. We observe that
 \begin{itemize}
 \item The spread rate of the turbulence zone converges as $\delta \rightarrow 0$.
 \item The limiting spread rate is \emph{non-zero}, implying that the turbulence zone spreads out at a linear rate in time.
 \end{itemize}
 Thus, the limit Young measure is not stationary and hence, non-atomic.
 
 Although, we are unable to provide a rigorous proof for the observed linear spread rate of the turbulence zone and of the consequent non-atomicity of the limit measure, we present a rigorous upper bound on the rate at which variance can increase. To see this, we let  $(\nu, \nu^\infty,\lambda)$ be an (admissible) measure valued solution (MVS) with atomic initial data, concentrated on $v_0$. Then $(\nu, \nu^\infty,\lambda)$ satisfies
\begin{align}\label{weak}
\int_0^T\int_{\bbT^2} \langle \nu_{x,t}, \xi \rangle \chi' (t)\phi(x) + \langle \nu_{x,t},\xi \otimes \xi \rangle : \nabla \phi (x)\chi(t) \,& dx \, dt  \\ + \int_0^T\left( \int_{\bbT^2} \langle \nu_{x,t}^\infty, \theta\otimes\theta\rangle : \nabla \phi(x) \chi(t) \, \lambda_t(dx)\right)\, dt &= -\int_{\bbT^2} v_0(x)\cdot \phi(x) \, dx, \nonumber
\end{align}
for all $\phi \in C^\infty(\bbT^2;\bbR^2)$, and $\chi \in C^\infty_c([0,T))$ with $\chi(0)=1$. 

If we take $\phi \ast \rho_\epsilon$ as a test function, where $\rho_\epsilon = \epsilon^{-2} \rho(x/\epsilon)$, $\rho\in C^\infty_c(\bbT^2)$ is a standard mollifier on $\mathbb T^2$ and $\ast$ denotes convolution, then we obtain 
\begin{align}\label{mollfied}
\begin{aligned}
\int_0^T\int_{\bbT^2} \langle (\rho_\epsilon\ast \nu)_{x,t}, \xi \rangle \chi' (t)\phi(x) + \langle (\rho_\epsilon\ast \nu)_{x,t}&,\xi \otimes \xi \rangle : \nabla \phi(x)\chi(t) \, dx \, dt  \\
 + \int_0^T \int_{\bbT^2} \Big( \int_{\bbT^2} \rho_\epsilon(x-y) \langle \nu_{x,t}^\infty, \theta\otimes\theta\rangle &\, \lambda_t(dx)\Big) : \nabla \phi(y)\chi(t)\, dy\, dt  \\
  &= -\int_{\bbT^2} (v_0\ast \rho_\epsilon)(x)\cdot \phi(x) \, dx
\end{aligned}
\end{align}
 after an application of Fubini's theorem in \eqref{weak}. In the following, we will denote 
 \begin{align*}
\langle \nu^\epsilon_{x,t}, f(\xi)\rangle := \langle (\rho_\epsilon \ast \nu)_{x,t}, f(\xi)\rangle,  \quad
\langle \lambda^\epsilon_{y,t},f(\theta)\rangle := \int_{\bbT^2} \rho_\epsilon(x-y) \langle \nu_{x,t}^\infty, f(\theta)\rangle \, \lambda_t(dx).
\end{align*}
Similarly,  we will write $v_0^\epsilon(x) := (\rho_\epsilon \ast v_0)(x)$ for the mollified initial data. With this notation, equation \eqref{mollfied} takes the form
\begin{align}
\begin{aligned}
\int_0^T\int_{\bbT^2} \langle \nu^\epsilon_{x,t}, \xi \rangle \chi'(t) \phi(x) + \langle \nu^\epsilon_{x,t},\xi \otimes \xi \rangle : \nabla \phi &(x)\chi(t) \, dx \, dt  \\
+ \int_0^T \int_{\bbT^2} \langle \lambda^\epsilon_{x,t},\theta \otimes \theta \rangle : \nabla  \phi(x)\chi(t)\, dx\, dt  &= -\int_{\bbT^2} v_0^\epsilon(x)\cdot \phi(x) \, dx
\end{aligned}
\end{align}
for all $\phi \in C^\infty(\bbT^2;\bbR^2)$, and $\chi \in C^\infty_c([0,T))$ with $\chi(0)=1$. Thus, $(\nu^\epsilon, \lambda^\epsilon\, \mathrm d x)$ is seen to be a MVS with mollified initial data given by $v_0^\epsilon$. \footnote{We will have no need to bring the concentration measure $\langle \lambda_{x,t}^\epsilon, \cdot \rangle \, dx$ into the sliced form $\langle \tilde \nu^\infty, \cdot \rangle \tilde \lambda_t(dx)$. Though, this could certainly be done.}

At this point, let us observe that for any suitable function $f$, we have 
\begin{align}\label{observation}
\begin{aligned}
\int_{\bbT^2} \langle \nu^\epsilon_{x,t}, f(\xi) \rangle \, dx &= \int_{\bbT^2} \langle \nu_{x,t}, f(\xi) \rangle \, dx, \\
\int_{\bbT^2} \langle \lambda^\epsilon_{y,t},f^\infty(\theta)\rangle \, dx &=\int_{\bbT^2} \langle \nu_{x,t}^\infty, f^\infty(\theta)\rangle \, \lambda_t(dx),
\end{aligned}
\end{align}
as follows from an application of Fubini's theorem.

Fix $\epsilon > 0$ for the moment. In the spirit of \cite{BDS1}, we define 
\begin{align}
F(t) &= \int_{\bbT^2} \langle \nu_{x,t}, \frac 12 \left| \xi - v_0^\epsilon \right|^2 \rangle + \lambda_t(\bbT^2) \\
F^\epsilon(t) &=  \int_{\bbT^2} \langle \nu^\epsilon_{x,t}, \frac 12 \left| \xi - v_0^\epsilon \right|^2 \rangle +  \left(|\lambda^\epsilon_{x,t}| \, dx\right)(\bbT^2) , \\
E(t) &=  \int_{\bbT^2} \langle \nu_{x,t}, \frac 12 \left| \xi \right|^2 \rangle + \frac 12 \lambda_t(\bbT^2)  \\
E^\epsilon(t) &=  \int_{\bbT^2} \langle \nu^\epsilon_{x,t}, \frac 12 \left| \xi \right|^2 \rangle +  \frac{1}{2}\left(|\lambda^\epsilon_{x,t}| \, dx\right)(\bbT^2).
\end{align}
By our observation \eqref{observation}, we have $F(t) = F^\epsilon(t)$ and $E(t) = E^\epsilon(t)$ for all $t\ge 0$.

It has been shown in the proof of \cite[Theorem 2]{BDS1} that for any MVS (admissible or not) with sufficiently smooth initial data $v_0^\epsilon(x)$, and with corresponding (strong) solution $v^\epsilon(x,t)$, the following inequality holds:
\begin{align}
F^\epsilon(t) \le E^\epsilon(t) - \frac 12 \int_{\bbT^2} |v_0^\epsilon|^2 \, dx + \frac 12 \int_0^t \Vert \nabla v_0^\epsilon +  (\nabla v_0^\epsilon)^T\Vert_\infty F^\epsilon(\tau) \, d\tau.
\end{align}
Assume now that $(\nu,\nu^\infty,\lambda)$ is in fact an \emph{admissible} solution, so that $E(t) \le \frac 12 \int_{\bbT^2} |v_0|^2 \, dx$ for all $t$. Then
\begin{align*}
F(t) &= F^\epsilon(t) \\
& \le E^\epsilon(t) - \frac 12 \int_{\bbT^2} |v_0^\epsilon|^2 \, dx + \frac 12 \int_0^t \Vert \nabla v_0^\epsilon +  (\nabla v_0^\epsilon)^T\Vert_\infty F^\epsilon(\tau) \, d\tau \\
&\le \frac 12 \int_{\bbT^2} |v_0|^2 \, dx - \frac 12 \int_{\bbT^2} |v_0^\epsilon|^2\, dx + \frac 12 \int_0^t \Vert \nabla v_0^\epsilon +  (\nabla v_0^\epsilon)^T\Vert_\infty F(\tau) \, d\tau.
\end{align*}
Note that the first difference is of order $\epsilon$, while (for a suitable mollifier) $ \Vert \nabla v_0^\epsilon +  (\nabla v_0^\epsilon)^T\Vert_\infty$ can be bounded by $\epsilon^{-1}$. Gronwall's inequality thus implies that
\begin{align*}
F(t) 
&\le \frac 12 \left(\int_{\bbT^2} |v_0|^2- |v_0^\epsilon|^2\, dx \right)\, \mathrm{e}^{\frac 12 \int_0^t \Vert \nabla v_0^\epsilon +  (\nabla v_0^\epsilon)^T\Vert_\infty \, d\tau  }\leq C\epsilon\,  \mathrm{e}^{\frac{t}{2\epsilon}},
\end{align*}
where $C\ge 0$ satisfies $\frac 12 \int_{\bbT^2} |v_0|^2- |v_0^\epsilon|^2\, dx \le C\epsilon$. 

The particular choice $\epsilon = t/2$ now gives the bound
\begin{align}\label{bound}
\int_{\bbT^2} \langle \nu_{x,t}, \frac 12 | \xi - v_0^{(t/2)} |^2 \rangle \dx + \lambda_t(\bbT^2) \le \frac{C\mathrm{e}}2  t,
\end{align}
for $t>0$.

\begin{corollary}
The mean $\bar\nu_{x,t} \overset{t\to 0}\longrightarrow v_0(x)$ converges \emph{strongly} in $L^2(\bbT^2;\bbR^2)$ for any admissible MVS with vortex sheet initial data $v_0$ \eqref{eq:vs}. Furthermore, the spatially averaged variance \eqref{eq:vart} cannot grow more than linearly for such solutions.
\end{corollary}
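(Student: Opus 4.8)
The plan is to derive both assertions as direct consequences of the quantitative estimate \eqref{bound}, so that no further energy-type analysis is required; all the analytic work has already been invested in establishing that bound. Since the concentration mass $\lambda_t(\bbT^2)$ appearing on the left-hand side of \eqref{bound} is nonnegative, I would first discard it to obtain the cleaner inequality
\begin{equation*}
\int_{\bbT^2} \langle \nu_{x,t}, |\xi - v_0^{(t/2)}|^2 \rangle \dx \le C\mathrm{e}\, t,
\end{equation*}
valid for $t > 0$, where $v_0^{(t/2)} = \rho_{t/2} \ast v_0$. The whole corollary then rests on comparing the second moment of $\nu_{x,t}$ about the fixed point $v_0^{(t/2)}(x)$ with two nearby quantities: the squared deviation of the mean, and the variance.

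For the strong convergence of the mean, I would apply Jensen's inequality pointwise in $(x,t)$, namely $|\langle \nu_{x,t}, \xi - v_0^{(t/2)} \rangle|^2 \le \langle \nu_{x,t}, |\xi - v_0^{(t/2)}|^2\rangle$, and integrate in $x$ to get $\| \bar\nu_{\cdot,t} - v_0^{(t/2)} \|_{L^2}^2 \le C\mathrm{e}\, t \to 0$. Combining this with the standard mollifier convergence $v_0^{(t/2)} \to v_0$ in $L^2$ as $t \to 0$ (which holds because $v_0 \in L^2$) and the triangle inequality yields $\bar\nu_{\cdot,t} \to v_0$ strongly in $L^2(\bbT^2;\bbR^2)$.

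For the variance bound I would use the variational characterization of the variance as the minimal second moment about a point: for every fixed $a \in \bbR^2$ one has $\langle \nu_{x,t}, |\xi - a|^2\rangle = \mathrm{Var}(\nu_{x,t}) + |\bar\nu_{x,t} - a|^2 \ge \mathrm{Var}(\nu_{x,t})$. Choosing $a = v_0^{(t/2)}(x)$ and integrating over $\bbT^2$ gives $\int_{\bbT^2} \mathrm{Var}(\nu_{x,t})\dx \le \int_{\bbT^2}\langle \nu_{x,t}, |\xi - v_0^{(t/2)}|^2\rangle \dx \le C\mathrm{e}\, t$, which is exactly the asserted at-most-linear growth of the spatially averaged variance \eqref{eq:vart}.

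I do not expect any genuine obstacle here, since the heavy lifting is already contained in \eqref{bound}; the only point that deserves care is the uniformity of the mollification estimate. One must verify that the same choice $\epsilon = t/2$ used to produce \eqref{bound} also drives $v_0^{(t/2)} \to v_0$, which is immediate for $L^2$ data but would require a separate argument were $v_0$ merely a measure. Here $v_0 \in L^2$ by the finite-energy assumption, so the convergence is standard and both statements follow.
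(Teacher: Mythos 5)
Your proposal is correct and follows essentially the same route as the paper's own proof: both arguments discard the nonnegative concentration term in \eqref{bound}, control $\Vert \bar\nu_{\cdot,t} - v_0\Vert_{L^2}$ via Jensen's inequality together with the standard $L^2$ mollifier convergence $v_0^{(t/2)} \to v_0$, and bound the averaged variance using the minimality of the second moment about the barycenter. The only cosmetic difference is that the paper writes the triangle step as the pointwise inequality $|a-b|^2 \le 2|a-c|^2 + 2|c-b|^2$ rather than in norm form.
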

\begin{proof}
This is an immediate corollary of estimate \eqref{bound}. We have
\begin{align*}
\int_{\bbT^2}  | \overline \nu - v_0 |^2  \dx 
&\le 2\int_{\bbT^2}  | \overline \nu - v_0^{(t/2)} |^2  \dx + 2\int_{\bbT^2}  |v_0 - v_0^{(t/2)} |^2  \dx \\
&\le   2\int_{\bbT^2} \langle \nu_{x,t}, | \xi - v_0^{(t/2)} |^2 \rangle \dx+ 2\int_{\bbT^2}  | v_0 - v_0^{(t/2)} |^2  \dx \\
&\to 0,
\end{align*}
as $t\to 0$, and
\begin{align*}
\int_{\bbT^2} \mathrm{Var}(\nu_{x,t}) \dx 
&= \int_{\bbT^2} \langle \nu_{x,t}, \frac 12 | \xi - \overline \nu|^2 \rangle \dx 
\le  \int_{\bbT^2} \langle \nu_{x,t}, \frac 12 | \xi - v_0^{(t/2)} |^2 \rangle \dx  
\le \frac{C\mathrm{e}}2  t,
\end{align*}
\end{proof}

For the particular choice of a piecewise linear function $v_0^\epsilon$
\[v_0^\epsilon(x) = 
\begin{cases} 
+e_1, 	& x_2 < \pi/2-\epsilon \text{  or  } x_2 > 3\pi/2+\epsilon, \\ 
\dfrac{\pi/2-x}{\epsilon}\; e_1, & \vert \pi/2-x_2 \vert \le \epsilon, \\
-e_1, & \pi/2+\epsilon \le x_2 < 3\pi/2-\epsilon, \\
\dfrac{x-3\pi/2}{\epsilon} \; e_1, & \vert 3\pi/2-x_2 \vert \le \epsilon,
\end{cases}
\]
we obtain a value of $C = 4\pi/3$, and the argument implies a bound on the spreading with constant $\frac{Ce}2 = \frac{2\pi e}3\approx 5.7$.
\begin{figure}
	\begin{subfigure}[b]{.45\textwidth}
    \includegraphics[width = \textwidth]{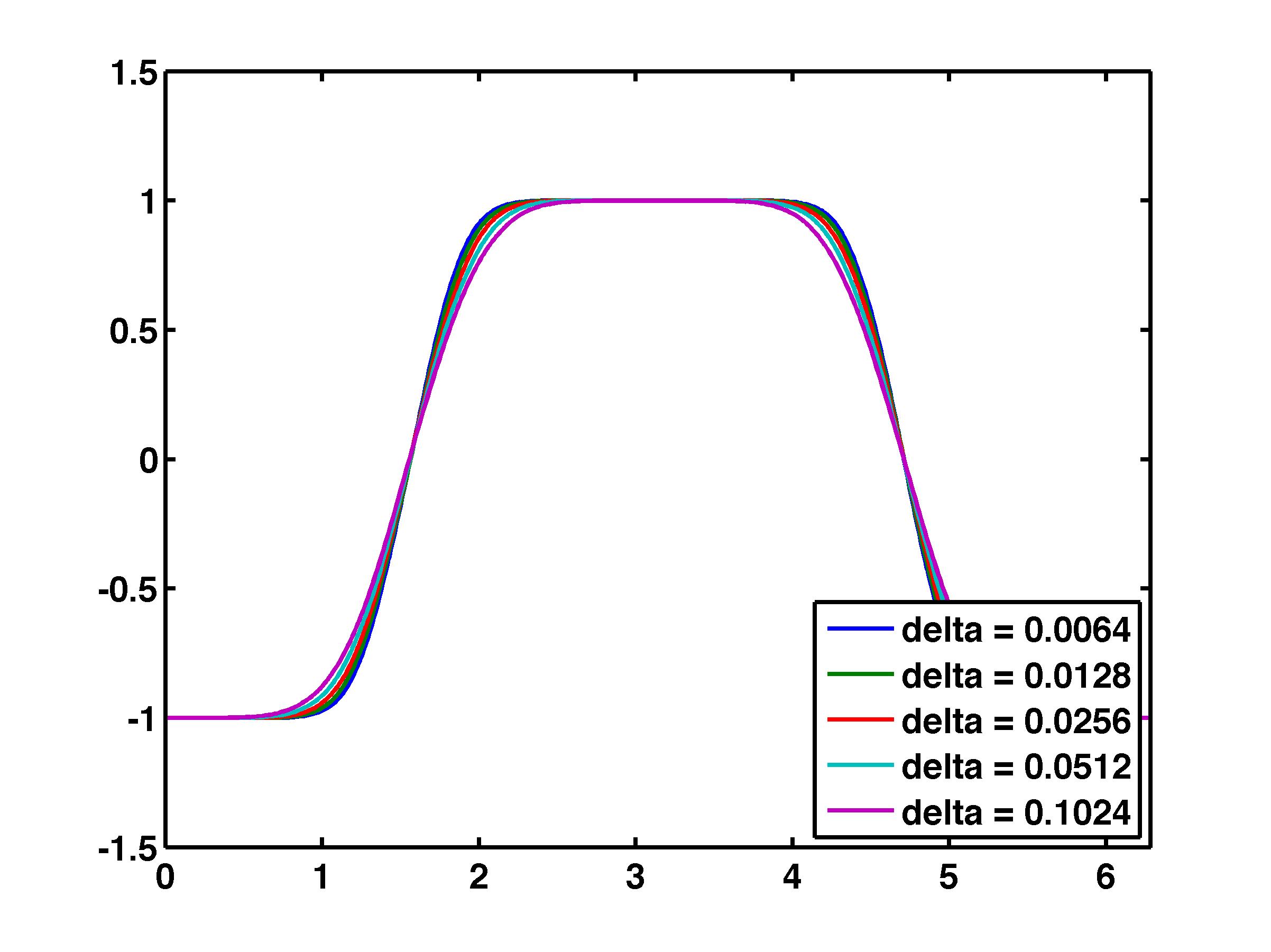}
    \caption{mean}
    \end{subfigure}
    \begin{subfigure}[b]{.45\textwidth}
    \includegraphics[width = \textwidth]{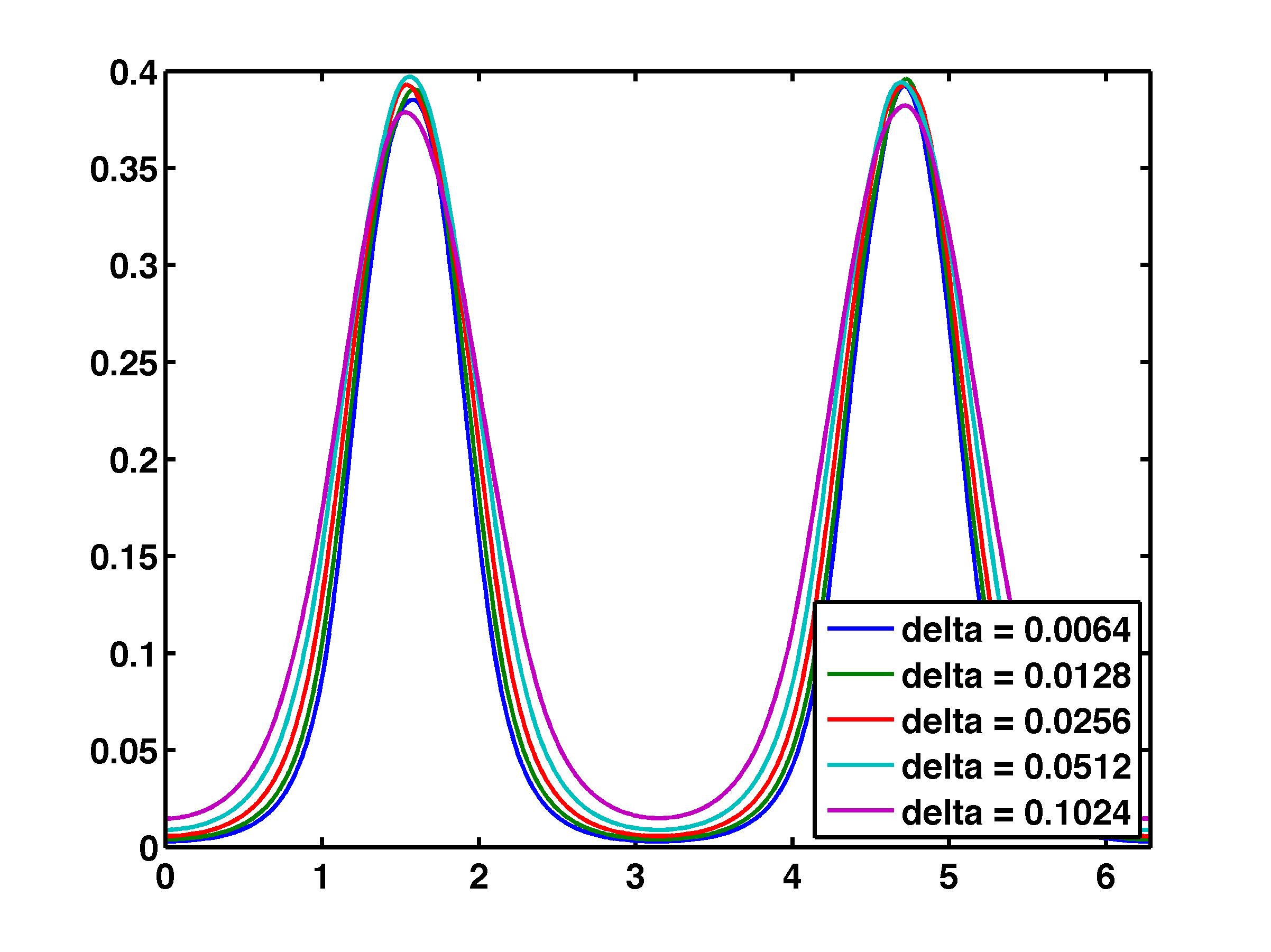}
    \caption{variance}
    \end{subfigure}

	\caption{Flat vortex sheet: 1-D slices of the mean and the variance of the first component computed with different values of $\delta$}
	\label{fig:10}
\end{figure}

\begin{figure}
	\centering
	\begin{subfigure}[b]{.49\textwidth}
		\includegraphics[width=\textwidth]{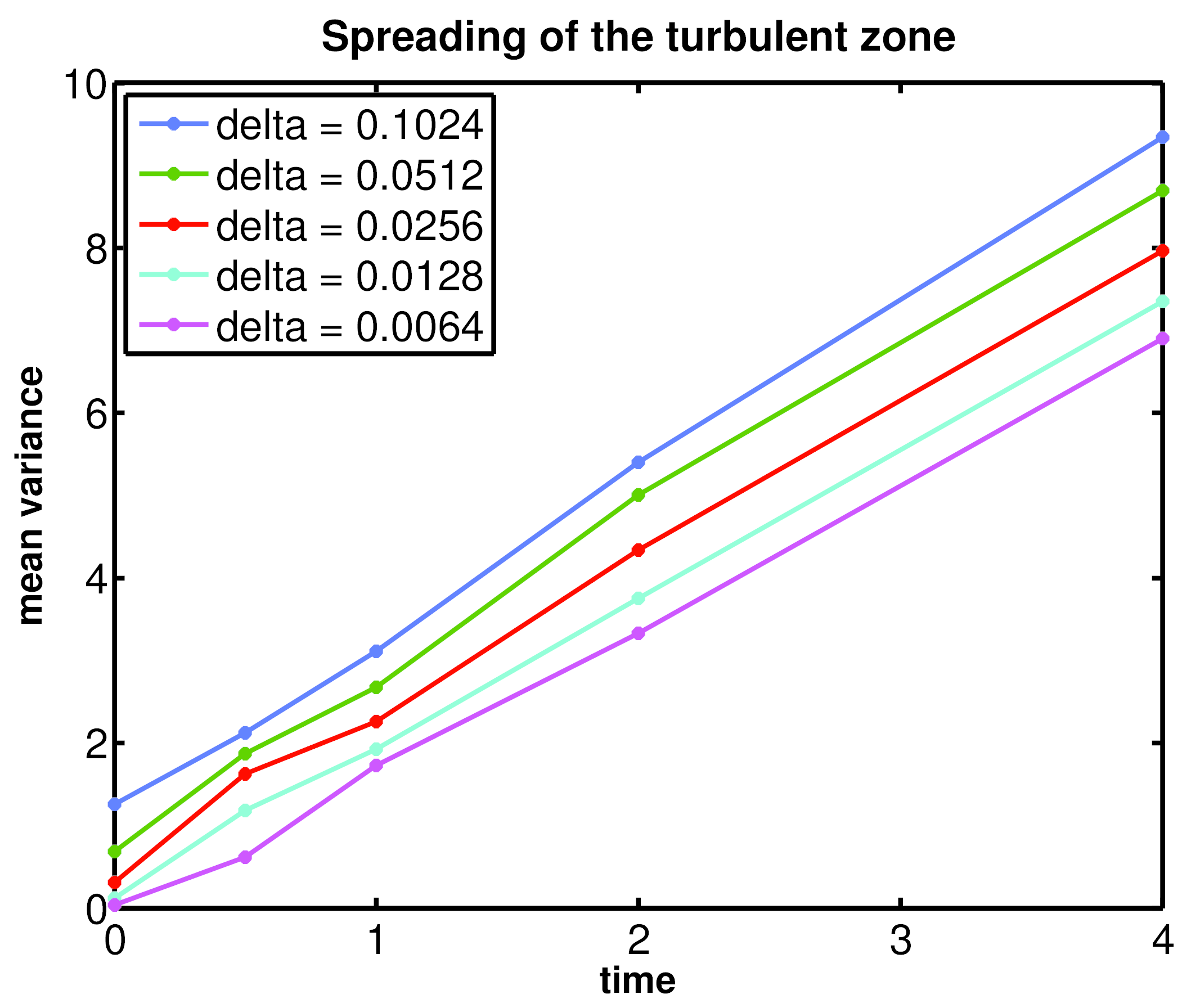}
		\caption{Spreading at different values of time for varying values of $\delta$.}
	\end{subfigure}
	\begin{subfigure}[b]{.49\textwidth}
		\includegraphics[width=\textwidth]{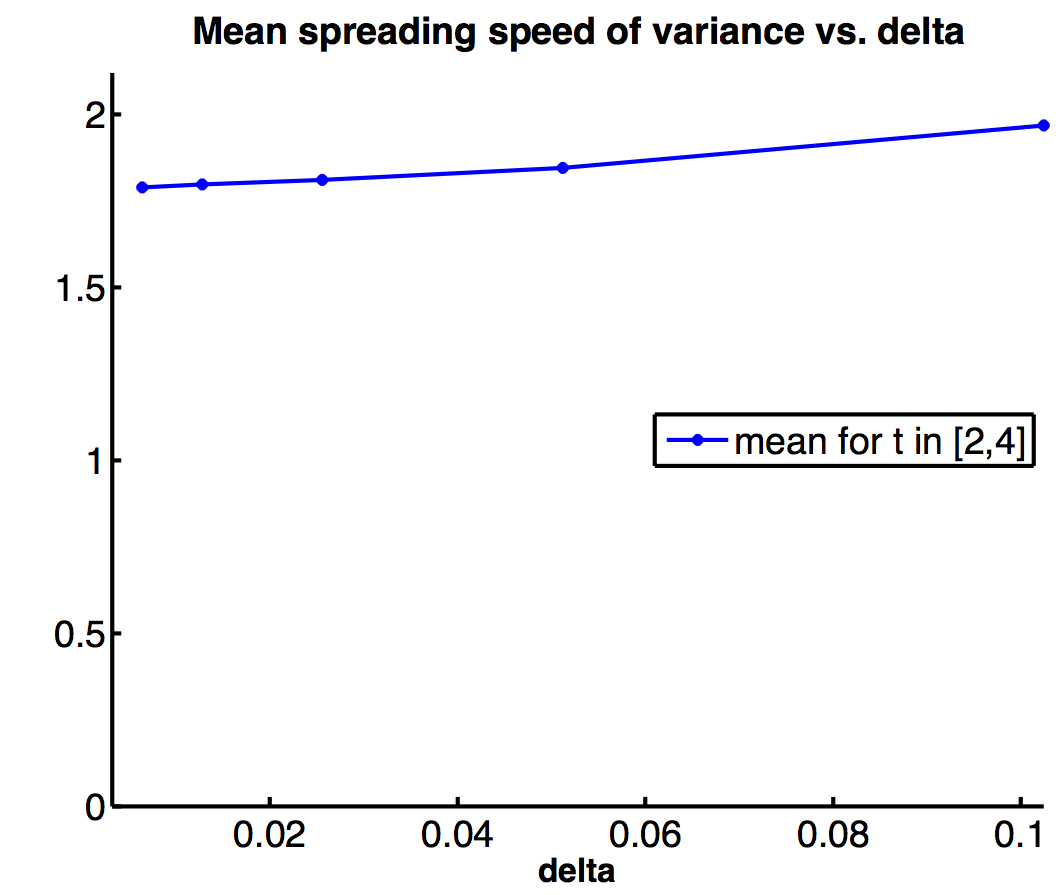}
		\caption{Mean spreading rate against $\delta$ over the time interval $[2,4]$.}
	\end{subfigure}
	\caption{Spreading of the turbulence zone in time.}
	\label{fig:12}
\end{figure}
 
The results of our computation as presented in figure \ref{fig:12} are consistent with the above corollary in establishing that averaged variance (concentrated in the turbulence zone for the flat vortex sheet initial data) does spread out at a rate that is linear in time but at a value of approximately $1.8$ (or about a third of the rigorous upper bound). 
\subsubsection{Probability distribution functions}
As a final test of the non-atomicity of the computed limit measure, we plot the empirical histogram at a point in space and different values of the perturbation parameter $\delta$ over time, in figure \ref{fig:14}. The histograms serve as approximation of the probability density function (pdf), corresponding to the measure valued solution \cite{FKMT1}. The figure shows that the pdfs converge as $\delta \rightarrow 0$. Furthermore, we observe that even if the initial measure is atomic (for small values of the perturbation parameter $\delta$), the resulting pdf is non-atomic at points in the turbulent zone. Thus, we provide considerable evidence that the limiting measure valued solution is non-atomic.
\begin{figure}
	\centering
	 \includegraphics[width =1\textwidth]{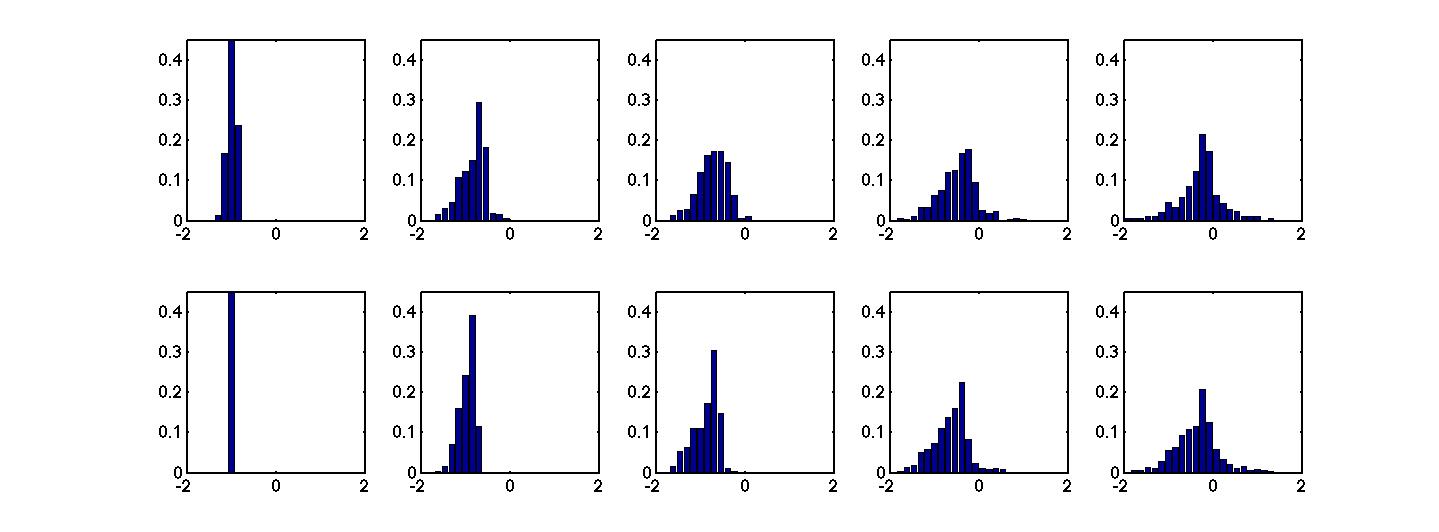}
	\caption{PDFs at a point $x = 2\pi\cdot (0.25,0.77)$ and different times $t=0,0.5,1,2,4$, for different values of $\delta = 0.0512$ (top), $\delta = 0.0064$ (bottom).}
	\label{fig:14}
\end{figure}
\begin{figure}
	 \begin{subfigure}[b]{.49\textwidth}
\includegraphics[width=\textwidth]{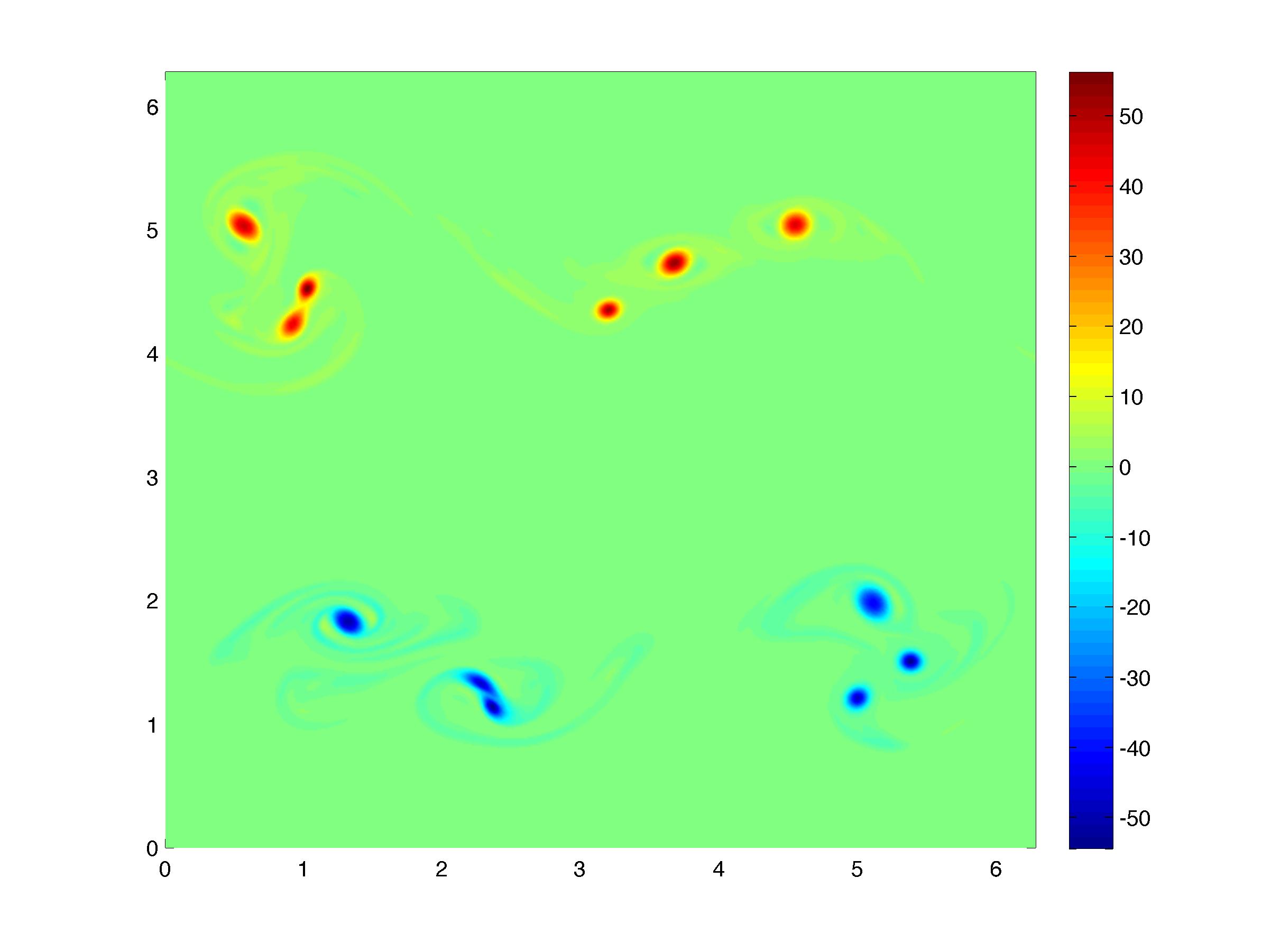}
\caption{$\delta = .0512$}
\end{subfigure}
\begin{subfigure}[b]{.49\textwidth}
\includegraphics[width=\textwidth]{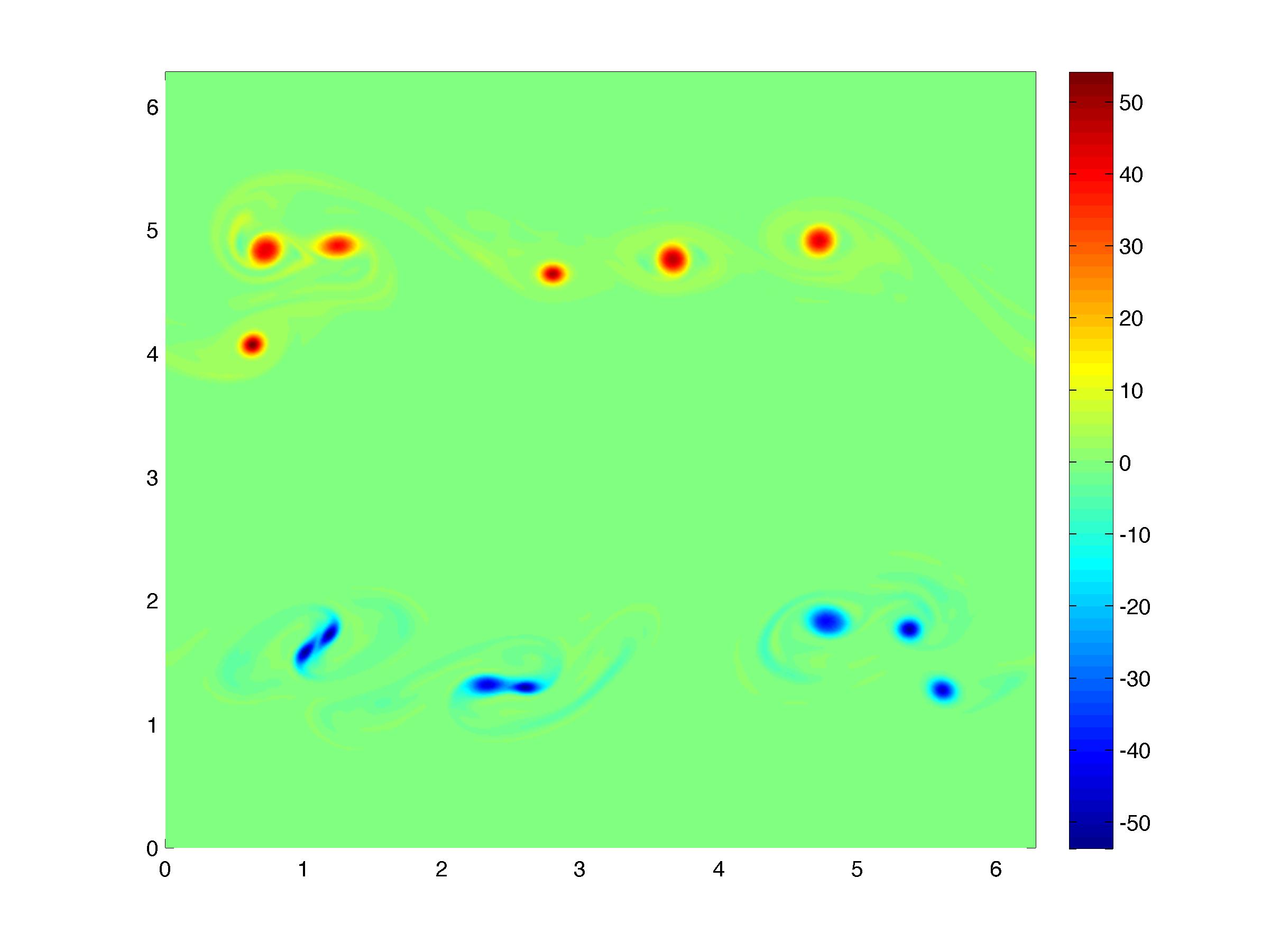}
\caption{$\delta = .0256$}
\end{subfigure} \\
\begin{subfigure}[b]{.49\textwidth}
\includegraphics[width=\textwidth]{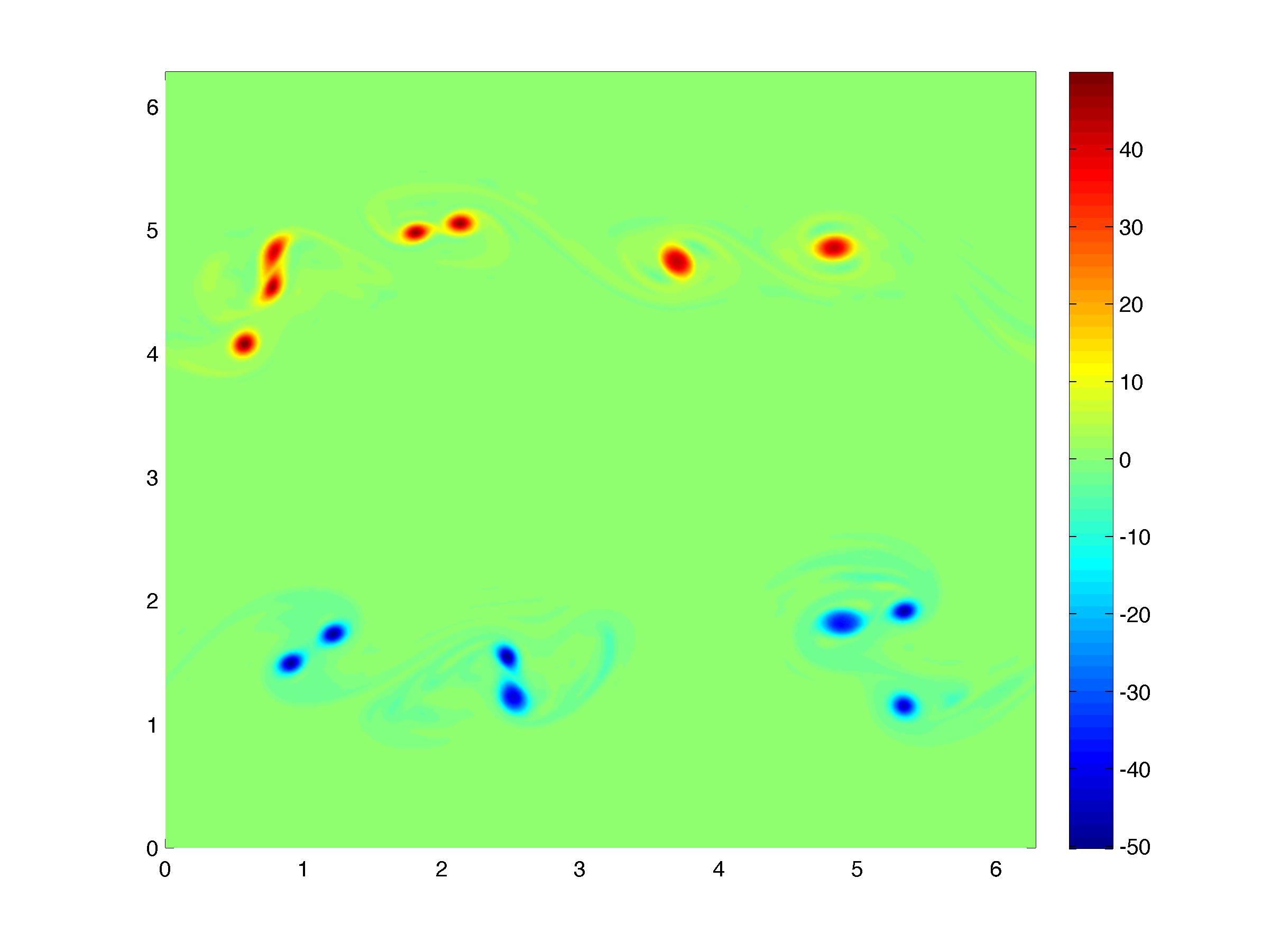}
\caption{$\delta = .0128$}
\end{subfigure}
\begin{subfigure}[b]{.49\textwidth}
\includegraphics[width=\textwidth]{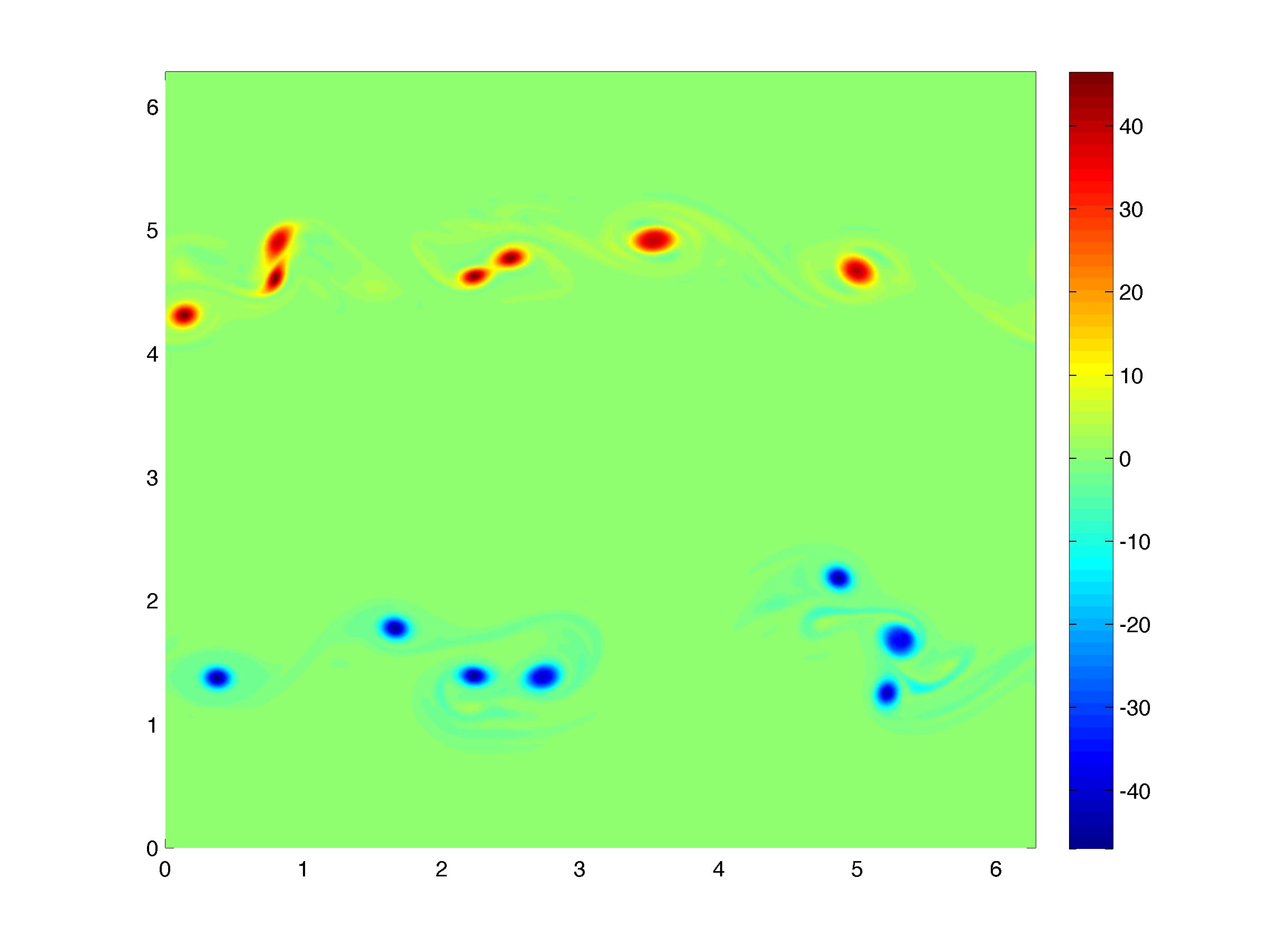}
\caption{$\delta = .0064$}
\end{subfigure}
	\caption{Separation of vortices of different signs}
	\label{fig:15}
\end{figure}
\begin{figure}
	\centering
	\begin{subfigure}[b]{.49\textwidth}
		\includegraphics[width=\textwidth]{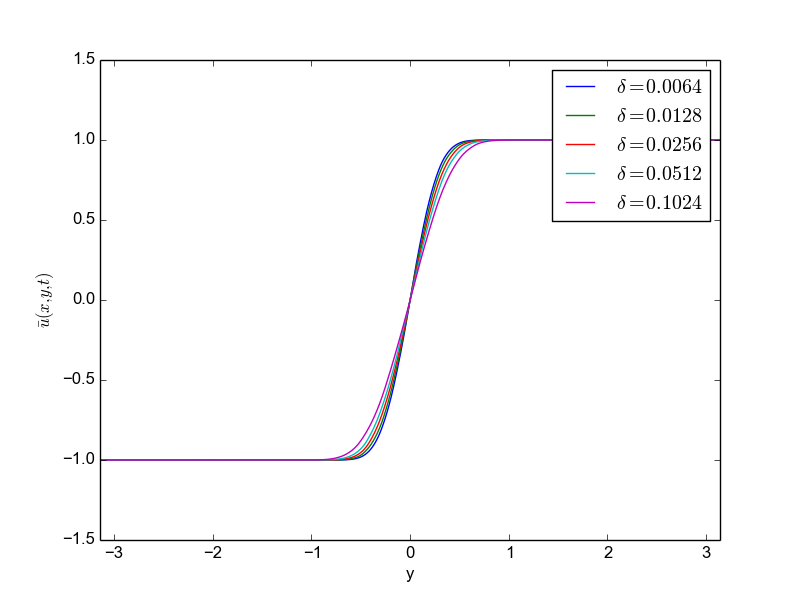}
		\caption{Mean}
	\end{subfigure}
	\begin{subfigure}[b]{.49\textwidth}
		\includegraphics[width=\textwidth]{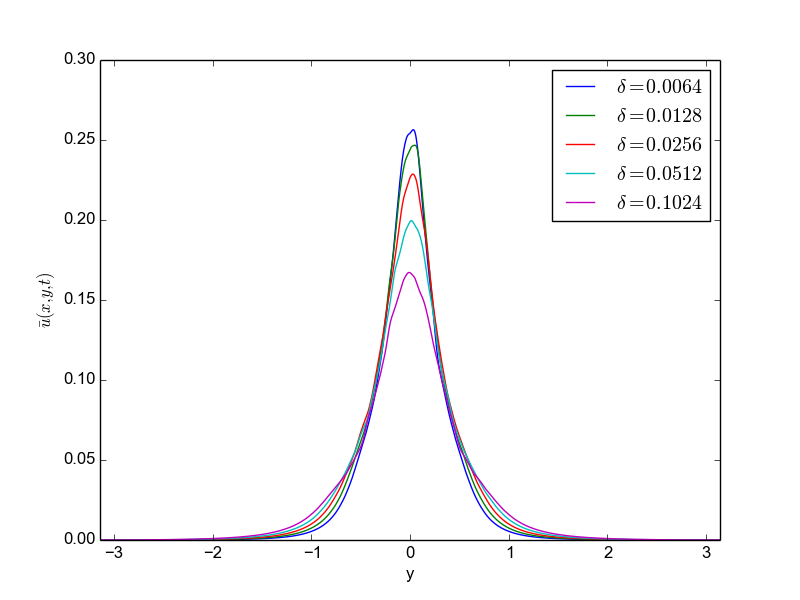}
		\caption{Variance}
	\end{subfigure}
	\caption{Mean and second moment (1-D slices along $x_1 = \pi$) for different values of $\delta$ from \cite{LeoSid1}}
	\label{fig:16}
	\end{figure}

\subsection{Possible non-uniqueness of Delort solutions}
As mentioned in the introduction, Delort in \cite{Del1} showed the \emph{first rigorous existence results} for vortex sheets in two space dimensions. In particular, the result pertained to the following class of velocity fields,
\begin{definition}
\label{def:v1}
A vector field $v \in L^\infty([0,T];L^2(\bbT^2;\bbR^2))$ will be said to belong to the Delort class, if the vorticity $\eta = \curl v$ is a bounded measure of distinguished sign i.e, $\eta \in H^{-1}(\bbT^2)\cap \mathcal{BM}_+$.
\end{definition}
Delort proved the following celebrated result,
\begin{theorem}
\label{thm:del1}
\emph{\cite{Del1}}: Under the assumption that the initial data is in the Delort class, as defined above. 
There exists a weak solution $v$ of the 2-D incompressible Euler equations \eqref{EQ}, that also belongs to the Delort class.
\end{theorem}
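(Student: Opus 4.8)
The plan is to construct the Delort solution as a weak limit of smooth solutions, with essentially all of the difficulty concentrated in passing to the limit in the quadratic nonlinearity. First I would mollify the initial vorticity, choosing the mollifier so that the regularized data $\eta_0^\epsilon = \rho_\epsilon \ast \eta_0$ stay nonnegative, have total mass bounded by $\|\eta_0\|_{\mathcal{BM}}$, and converge to $\eta_0$ in $H^{-1}(\bbT^2)$. For each fixed $\epsilon$ the datum is smooth, so the classical (Yudovich/energy) theory furnishes a global smooth solution $v^\epsilon$ of \eqref{EQ} with vorticity $\eta^\epsilon = \curl v^\epsilon$. Both the sign and the total mass of the vorticity are transported conservatively, giving $\eta^\epsilon(t) \ge 0$ and $\|\eta^\epsilon(t)\|_{\mathcal{BM}} \le \|\eta_0\|_{\mathcal{BM}}$; energy conservation provides a uniform $L^\infty(0,T;L^2)$ bound on $v^\epsilon$, which through Biot--Savart is equivalent to a uniform $H^{-1}$ bound on $\eta^\epsilon$.

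These estimates let me extract, along a subsequence, weak-$\ast$ limits $v^\epsilon \wsto v$ in $L^\infty(0,T;L^2)$ and $\eta^\epsilon \wsto \eta$ in $L^\infty(0,T;\mathcal{BM}_+ \cap H^{-1})$. A uniform bound on $\partial_t v^\epsilon$ in a negative Sobolev space, read off from the momentum equation (whose right-hand side is controlled in $L^1$), yields equicontinuity in time and hence, via a compact embedding, strong convergence $v^\epsilon \to v$ in $C([0,T];H^{-1})$. The limit automatically inherits the sign, the mass bound, and, by weak lower semicontinuity, the $H^{-1}$ bound, so $v$ lies in the Delort class. It then remains only to show that $v$ is a weak solution.

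The hard part will be the nonlinear term. Weak $L^2$ convergence does not imply $v^\epsilon \otimes v^\epsilon \wsto v \otimes v$: concentrations may produce a matrix-valued defect measure $\mu = \lim (v^\epsilon \otimes v^\epsilon) - v\otimes v$. The decisive point, due to Delort, is that under the sign and $H^{-1}$ hypotheses this defect is \emph{isotropic}, i.e. $\mu = \tfrac12 (\mathrm{tr}\,\mu)\,\mathrm{Id}$. Heuristically, near a concentration point the nonnegative vorticity generates, via Biot--Savart, a velocity that is asymptotically the rotationally symmetric field of a point vortex, and for such a field the angular average of $v\otimes v$ is a multiple of the identity, the anisotropic contributions cancelling. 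To make this rigorous I would test the quadratic term against a divergence-free $\phi = \nabla^\top\zeta$ and rewrite it as a symmetric bilinear form $\iint \eta^\epsilon(x)\,\eta^\epsilon(y)\, H_\phi(x,y)\,dx\,dy$ in the vorticity, where $H_\phi$ is the symmetrised Biot--Savart kernel contracted with $\nabla\phi$. This kernel is bounded and continuous off the diagonal, while its diagonal singularity is exactly the logarithmic kernel controlled by the $H^{-1}$ (finite-energy) bound; the nonnegativity of $\eta^\epsilon$ rules out cancellation-driven concentration of opposite-sign masses, so the only possible defect comes from the diagonal and is isotropic.

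Once isotropy is established the proof closes quickly: since $\mathrm{Id} : \nabla\phi = \div\,\phi = 0$ for divergence-free test functions, the defect drops out of the weak formulation, and therefore $\int v^\epsilon\otimes v^\epsilon : \nabla\phi \to \int v\otimes v : \nabla\phi$. Combined with the linear (hence trivial) passage to the limit in $\partial_t\phi\cdot v^\epsilon$ and in the initial-data term, this shows that $v$ satisfies the weak form of \eqref{EQ} for every divergence-free $\phi$, completing the construction. I expect essentially all of the work to sit in this concentration-cancellation step; the approximation, the a priori estimates, and the compactness are routine.
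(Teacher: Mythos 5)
Your proposal is correct and follows essentially the same route the paper attributes to Delort (and Schochet's weak vorticity formulation): mollify the data, use the maximum principle and energy bound to get uniform sign, mass, and $H^{-1}$ control, extract weak limits with temporal compactness, and handle the nonlinearity by rewriting it as a symmetric bilinear form in the vorticity whose diagonal defect is killed by the no-concentration property that the distinguished sign and $H^{-1}$ bound jointly provide. The paper itself only sketches this argument and defers to \cite{Del1} and \cite{schochet}, so there is nothing further to compare.
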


The proof is based on mollifying the initial data, resulting in the generation of a sequence of approximate (smooth) solutions to the Euler equations. The resulting vorticity will be of a definite sign as it satisfies a maximum principle. The strong compactness of the approximating sequence is based on a localized $L^1$ control of the vorticity and uses the definite sign of the vorticity in a crucial manner, see also \cite{schochet}.

The uniqueness of the solution constructed by Delort is still open. It turns out that we can use property (4) of theorem \ref{atomicuniqueness} to numerically investigate this interesting question of uniqueness. However, as we consider the Euler equations with periodic boundary conditions in this article, we \emph{cannot restrict ourselves to the Delort class i.e, vorticity being a bounded measure with a distinguished sign}. We need to define the following class of solutions,
\begin{definition}\label{def:del2}
A vector field $v \in L^\infty([0,T];L^2(\bbT^2;\bbR^2))$ will be said to belong to the \emph{extended Delort} class, if the vorticity $\eta = \curl v$ is a bounded measure i.e, $\eta \in H^{-1}(\bbT^2)\cap \mathcal{BM}$.
\end{definition}

The existence proof of Delort in \cite{Del1} can be readily extended to the case of extended Delort class initial data in the sense of definition \ref{def:del2} provided that vortices of opposite sign do not interact with each other. We formalize this argument in the following theorem,
\begin{theorem}
\label{thm:del2}
Let the initial velocity field $v_0$ belong to the extended Delort class as defined above. Further, assume that there exists a constant $c>0$ and a terminal time $T > 0$, such that the time-dependent regions 
 \begin{align*}
 \mathcal D_+ (t) &= \{x\in \bbT^n; \; \exists N\in \bbN, \; \eta_N(x,t) >  0 \}, \\
 D_{-} (t) &= \{x\in \bbT^n; \; \exists N\in \bbN, \; \eta_N(x,t) < 0 \},
 \end{align*}
 satisfy 
 \begin{equation}
 \label{eq:sep}
 \mathrm{dist}({\mathcal D}_+(t), {\mathcal D}_-(t)) \geq c, \quad \forall t \in [0,T],
 \end{equation}
 then there exists a weak solution $v$ of the incompressible Euler equations \eqref{EQ} that belongs to the extended Delort class \ref{def:del2}. 
 \end{theorem}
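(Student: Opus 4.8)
The plan is to reproduce Delort's compactness argument \cite{Del1,schochet} while exploiting the separation hypothesis \eqref{eq:sep} to neutralise the interaction between vorticity of opposite signs. First I would take as approximating sequence the smooth solutions $v_N$ obtained by mollifying the initial data as in \cite{Del1}, so that each $v_N$ is an exact smooth solution of \eqref{EQ} whose vorticity $\eta_N = \curl v_N$ is transported by the volume-preserving flow map; consequently $\Vert \eta_N(\cdot,t)\Vert_{L^1} = \Vert \curl \BP_N v_0\Vert_{L^1}$ is bounded uniformly in $N$ and $t$, giving a uniform bound in $\mathcal{BM}$. Combined with the energy estimate $\Vert v_N\Vert_{L^2}\le \Vert v_0\Vert_{L^2}$ of Lemma \ref{est}, which through the Biot--Savart law furnishes a uniform $H^{-1}$-bound on $\eta_N$, this yields uniform bounds in $\mathcal{BM}\cap H^{-1}$. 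Passing to a subsequence I obtain $v_N \wsto v$ in $L^\infty_t L^2_x$ and $\eta_N \wsto \eta$ in $\mathcal{BM}$, with $v$ recovered from $\eta$ by Biot--Savart.

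Next I would decompose $\eta_N = \eta_N^+ - \eta_N^-$ into positive and negative parts. By the very definition of $\mathcal{D}_\pm(t)$ one has $\mathrm{supp}\,\eta_N^+(\cdot,t)\subseteq \overline{\mathcal{D}_+(t)}$ and $\mathrm{supp}\,\eta_N^-(\cdot,t)\subseteq \overline{\mathcal{D}_-(t)}$ \emph{for every} $N$, so hypothesis \eqref{eq:sep} gives the uniform separation $\mathrm{dist}(\mathrm{supp}\,\eta_N^+,\mathrm{supp}\,\eta_N^-)\ge c$. Extracting further subsequences, $\eta_N^\pm \wsto \eta^\pm$, where $\eta^\pm$ have distinguished sign and supports still separated by $c$. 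The point of this splitting is that each of $\eta_N^+$ and $\eta_N^-$ individually falls within the scope of Delort's single-sign theory.

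The heart of the matter is passing to the limit in the quadratic term $\int v_N\otimes v_N : \nabla\phi\,\dx$. Writing $v_N = K \ast \eta_N$ with $K$ the Biot--Savart kernel, this term takes the form $\iint_{\bbT^2\times\bbT^2} H_\phi(x,y)\,d\eta_N(x)\,d\eta_N(y)$ for a \emph{symmetric} kernel $H_\phi$ which, after the cancellation of the Biot--Savart singularity on the diagonal, extends to a continuous bounded function on $\bbT^2\times\bbT^2$ and is smooth off the diagonal. Expanding $\eta_N\otimes\eta_N$ through the decomposition produces two same-sign contributions, from $\eta_N^+\otimes\eta_N^+$ and $\eta_N^-\otimes\eta_N^-$, and a cross contribution from $\eta_N^+\otimes\eta_N^-$. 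For the same-sign terms I would invoke Delort's argument verbatim: each factor has a fixed sign and is bounded in $\mathcal{BM}\cap H^{-1}$, which rules out concentration-cancellation and forces convergence to the corresponding terms built from $\eta^\pm$. For the cross term the separation does the work: on $\mathrm{supp}\,\eta_N^+\times\mathrm{supp}\,\eta_N^-$ we have $|x-y|\ge c$, so $H_\phi$ is evaluated only away from the diagonal where it is genuinely smooth; approximating this continuous function on the compact off-diagonal region $\overline{\mathcal{D}_+}\times\overline{\mathcal{D}_-}$ by tensor products $f(x)g(y)$ (Stone--Weierstrass) and using the weak-$*$ convergence of each factor with the uniform mass bounds, the cross term converges to $\iint H_\phi\,d\eta^+\,d\eta^-$.

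Combining the three limits shows that $\iint H_\phi\,d(\eta_N\otimes\eta_N)\to \iint H_\phi\,d(\eta\otimes\eta)$, so the nonlinear term passes to the limit and $v$ is a weak solution of \eqref{EQ}; the uniform $\mathcal{BM}\cap H^{-1}$ bounds place $v$ in the extended Delort class of Definition \ref{def:del2}. I expect the main obstacle to be precisely the rigorous justification of the cross term, namely verifying that the supports of $\eta_N^\pm$ are uniformly separated for \emph{all} $N$ simultaneously (this is exactly what the ``$\exists N$'' in the definition of $\mathcal{D}_\pm$ secures) and that the off-diagonal modulus of continuity of $H_\phi$ is quantitative in $c$ and survives the weak-$*$ limit, including the time integration against the space-time test function $\phi$. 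The same-sign terms, by contrast, reduce to Delort's theorem applied separately to $\eta^+$ and $\eta^-$.
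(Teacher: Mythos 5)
Your proposal follows essentially the same route the paper intends: the paper's own ``proof'' is a single sentence asserting that one repeats the Delort--Schochet compactness argument with the separation hypothesis \eqref{eq:sep} standing in for the distinguished sign, and your decomposition $\eta_N=\eta_N^+-\eta_N^-$, with Delort's single-sign theory for the same-sign terms and off-diagonal smoothness of the symmetrized Biot--Savart kernel for the cross term, is exactly the intended elaboration. Two small caveats that do not change the verdict: the symmetrized kernel $H_\phi$ is bounded but \emph{not} continuous across the diagonal (which is precisely why the same-sign terms need Delort's concentration-cancellation machinery, as you in fact invoke), and the uniform $H^{-1}$ bound on $\eta_N^{\pm}$ \emph{separately} is not automatic from that on $\eta_N$ --- it requires localizing with a cutoff adapted to the $c$-separation of the supports.
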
 
The proof follows from a straightforward repetition of the arguments of the proof of theorem \ref{thm:del1} in \cite{Del1} and \cite{schochet}, while replacing the distinguished sign of the resulting vorticity field with assumption \eqref{eq:sep}.  

Next, we will investigate the uniqueness of weak solutions of \eqref{EQ} that belong to the extended Delort class.  To this end, we return to the flat vortex sheet initial data \eqref{initdata} and consider the perturbed random field initial data $X^{0}_{\rho,\delta}$ and the resulting solutions $X_{\rho,\delta}$. We collect some properties of this set of solutions below,
\begin{lemma}
\label{lem:10}
The solutions $X_{\rho,\delta}$ of the 2-D Euler equations \eqref{EQ} with randomly perturbed flat vortex sheet data $X^{0}_{\rho,\delta}$ satisfy for every realization $\omega \in \Omega$: there exist $\rho_k, \delta_k \to 0$ such that
\begin{itemize}
\item $X_{\rho_k,\delta_k}(\omega) \to X(\omega)$ in $C([0,T]; L^2_w(\bbT^2;\bbR^2))$, 
\item $\int \vert \eta_{\rho_k,\delta_k} (\omega) \vert \dx \le C$ uniformly for some constant $C$ with $\eta_{\rho,\delta} = \curl{X_{\rho,\delta}}$
\item  Under the further assumption that vortices of distinguished sign are separated i.e, $\eta_{\rho,\delta}(\omega)$ satisfies \eqref{eq:sep} (uniformly) for all $\omega$, we have a uniform lack of concentration of vorticity, in the sense that
\[\lim_{r\to 0} \sup_{0\le t\le T} \sup_N \int_{B_r(x)} \vert \eta(\omega)_{\rho_k,\delta_k}  \vert \dx = 0, \qquad \forall \; x \in \bbT^2,\]
\end{itemize}
Then for each $\omega \in \Omega$, $X(\omega)$ is a weak solution of the Euler equations that belongs to the \emph{extended Delort class} \ref{def:del2}. Furthermore, 
$$
\lim _{t \rightarrow 0} X(t,\omega) = v_0, \quad {\rm in} \quad L^2(\bbT^2;\bbR^2),
$$
\end{lemma}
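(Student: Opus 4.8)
The plan is to pass to the limit $k\to\infty$ in the weak formulation satisfied by the smooth realizations and then verify, in turn, membership in the extended Delort class, the Euler equations in the weak sense, and the attainment of the initial data. Throughout I fix $\omega$ and abbreviate $X_k := X_{\rho_k,\delta_k}(\omega)$ and $\eta_k := \eta_{\rho_k,\delta_k}(\omega)$. The uniform $H^{-1}$ vorticity bound in property (4) of Theorem \ref{atomicuniqueness} is equivalent to a uniform energy bound, so $X_k$ is bounded in $L^\infty([0,T];L^2(\bbT^2;\bbR^2))$; combined with the hypothesis $X_k\to X$ in $C([0,T];L^2_w)$ this gives $X\in L^\infty([0,T];L^2)$ and hence $\eta=\curl X\in H^{-1}$. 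The uniform total-variation bound $\int|\eta_k|\dx\le C$ makes $\eta_k$ bounded in $L^\infty([0,T];\mathcal{BM})$, so after extracting a further subsequence $\eta_k\wsto\eta$ as measures; therefore $\eta\in H^{-1}\cap\mathcal{BM}$ and $X$ belongs to the extended Delort class of Definition \ref{def:del2}.

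The heart of the argument is the passage to the limit in the weak formulation
\[\int_0^T\!\!\int_{\bbT^2} X_k\cdot\partial_t\phi + (X_k\otimes X_k):\nabla\phi\,\dx\dt + \int_{\bbT^2} X_k^0\cdot\phi(\cdot,0)\,\dx = 0,\]
valid for each $k$ and every divergence-free $\phi\in C_c^\infty([0,T)\times\bbT^2;\bbR^2)$. The linear term converges by the weak $L^2$ convergence of $X_k$ and the initial term by $X_k^0\to v_0$ in $L^2$ (property (4) of Theorem \ref{atomicuniqueness}). The genuine obstruction is the quadratic term, for which weak $L^2$ convergence is not enough. Following Delort \cite{Del1} and Schochet \cite{schochet}, I would use the Biot--Savart representation $X_k=K\ast\eta_k$ to rewrite it as a symmetric bilinear form $\iint_{\bbT^2\times\bbT^2} H_\phi(x,y)\,d(\eta_k\otimes\eta_k)(x,y)$ whose kernel $H_\phi$ is bounded and continuous off the diagonal $\{x=y\}$. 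Splitting into $\{|x-y|>r\}$ and $\{|x-y|\le r\}$, the off-diagonal piece passes to the limit because the uniform mass bound forces $\eta_k\otimes\eta_k\wsto\eta\otimes\eta$ against continuous kernels, while the near-diagonal piece is dominated by $\sup|H_\phi|\cdot(|\eta_k|\otimes|\eta_k|)(\{|x-y|\le r\})$, which tends to $0$ uniformly in $k$ as $r\to 0$ by the no-concentration hypothesis. The separation assumption \eqref{eq:sep} enters precisely here: near the diagonal only vortices of a common sign interact, so the definite-sign concentration-cancellation mechanism of Delort is available on each region $\mathcal D_\pm(t)$; sending $k\to\infty$ and then $r\to 0$ identifies the limit with $\iint H_\phi\,d(\eta\otimes\eta)$, the nonlinear term associated to $X$, and shows that $X$ is a weak solution.

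It remains to treat the initial condition and to attain it strongly. Each $X_k$ is a smooth 2-D Euler solution and hence conserves energy, $\|X_k(t)\|_{L^2}=\|X_k^0\|_{L^2}$, so weak lower semicontinuity gives $\|X(t)\|_{L^2}\le\lim_k\|X_k^0\|_{L^2}=\|v_0\|_{L^2}$ for every $t$. On the other hand $X\in C([0,T];L^2_w)$ with $X(0)=v_0$ yields $X(t)\wto v_0$ in $L^2$ as $t\to 0$, whence $\|v_0\|_{L^2}\le\liminf_{t\to 0}\|X(t)\|_{L^2}$. Combining the two gives $\|X(t)\|_{L^2}\to\|v_0\|_{L^2}$, and norm convergence together with weak convergence in the Hilbert space $L^2$ upgrades to the asserted strong convergence $\lim_{t\to 0}X(t,\omega)=v_0$. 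I expect the quadratic-term passage---specifically justifying the boundedness and off-diagonal continuity of $H_\phi$ and the sign-localization that makes the no-concentration bound effective---to be the only genuinely delicate point; the remaining steps are soft compactness and standard Hilbert-space facts.
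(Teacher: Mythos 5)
Your proposal is correct and follows essentially the same route as the paper, which disposes of the first two assertions via energy conservation and the vorticity maximum principle and then simply cites the concentration--cancellation arguments of Delort \cite{Del1} and Schochet \cite{schochet} for the compactness; you have merely written out in detail the symmetrized Biot--Savart kernel argument that those citations contain. The only addition beyond the paper's one-line proof is your explicit derivation of the strong attainment of the initial data from energy conservation plus weak lower semicontinuity, which is sound.
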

The first and second assertions of the above lemma are straight forward consequences of energy conservation and the maximum principle on vorticity for the smooth solutions $X_{\rho,\delta}$. Once, we assume \eqref{eq:sep}, the compactness of the approximating sequence is established by repeating the arguments of the proof of Theorem \ref{thm:del1} as presented in \cite{Del1} and \cite{schochet}. 

We are unable to provide a rigorous proof for the assumption \eqref{eq:sep} in the case of perturbed flat vortex sheet initial data. However, this assumption can be readily verified \emph{a posteriori} in our numerical computations. As an example, we fix a single sample (realization) and present the vorticity, obtained with the spectral method with $N=512$ nodes and $\rho = 0.008$ for different values of $\delta$ at time $T=4$ in figure \ref{fig:15}. The figure clearly shows that the vortices of positive and negative sign for any value of the perturbation parameter $\delta$ are well separated even at this relatively late time $T=4$. In fact, we observe that the time of separation as required by the assumption \eqref{eq:sep} is $T \geq 4$ for all tested $\omega \in \Omega$. Hence, we can assert that each of our realizations (samples) converges (upto a subsequence) to a weak solution of \eqref{EQ} that belongs to the extended Delort class.  This in turn, results in the following statement about the mean of the (admissible) measure valued solution $\nu_{\rho,\delta}$ constructed by the ensemble based algorithm \ref{alg1} applied to the vortex sheet initial data \eqref{initdata},
\begin{lemma}
\label{lem:11}
Let $v_0$ be the flat vortex sheet initial data \eqref{initdata} and $\nu = \lim\limits_{\rho,\delta \rightarrow 0} \nu_{\rho,\delta}$ (in the narrow sense) be an (admissible) measure valued solution of the Euler equations \eqref{EQ}, corresponding to this atomic initial data $\delta_{v_0}$. Further, if we assume that the solutions of the Euler equations that belong to the \emph{extended Delort class} \ref{def:del2} are unique, then 
$$
\langle \nu_{x,t}, \xi \rangle = v_0(x) ,\quad {\rm in} \quad L^2(\bbT^2;\bbR^2).
$$
\end{lemma}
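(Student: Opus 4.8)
The plan is to exploit the fact that the flat vortex sheet datum $v_0$ given by \eqref{eq:vs} is itself a stationary weak solution lying in the extended Delort class of Definition \ref{def:del2}: its vorticity $\curl v_0$ is a sum of two measures supported on the horizontal interfaces $\{x_2 = \pi/2\}$ and $\{x_2 = 3\pi/2\}$, hence a bounded measure, and it lies in $H^{-1}$ because $v_0\in L^2$ (a uniform $L^2$ bound on the velocity yields a uniform $H^{-1}$ bound on its vorticity, as in the proof of Theorem \ref{atomicuniqueness}). Thus $v_0$ is one weak solution in the extended Delort class with initial datum $v_0$. By Lemma \ref{lem:10}, for every $\omega\in\Omega$ the pathwise approximations $X_{\rho,\delta}(\omega)$ subsequentially converge in $C([0,T];L^2_w(\bbT^2;\bbR^2))$ to a weak solution $X(\omega)$ in the extended Delort class attaining $v_0$ as $t\to 0$ in $L^2$. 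Under the standing uniqueness assumption for this class, any such subsequential limit must coincide with the stationary solution, i.e.\ $X(\omega)=v_0$.

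The first step is therefore to upgrade subsequential convergence to convergence of the full family. Since every sequence $(\rho_j,\delta_j)\to 0$ admits, for each fixed $\omega$, a further subsequence along which $X_{\rho_j,\delta_j}(\omega)$ converges, and since by uniqueness every such limit equals $v_0$, the whole family converges: $X_{\rho,\delta}(\omega)\to v_0$ in $C([0,T];L^2_w)$ as $\rho,\delta\to 0$, for each $\omega$ (here I use that, restricted to the uniformly bounded family supplied by the energy estimate \eqref{enest}, the relevant topology is metrizable, so the subsequence criterion applies). The second step is to pass from this pathwise convergence to convergence of the mean. For any $\phi\in C^\infty_0(D)$, the convergence $X_{\rho,\delta}(\omega)\to v_0$ in $C([0,T];L^2_w)$ yields $\int_D X_{\rho,\delta}(\omega;x,t)\cdot\phi\dx\dt \to \int_D v_0(x)\cdot\phi\dx\dt$ for every $\omega$; the energy estimate \eqref{enest} bounds the integrand uniformly in $\omega,\rho,\delta$, so dominated convergence over $(\Omega,P)$ gives $\int_D \big(\int_\Omega X_{\rho,\delta}(\omega)\,dP(\omega)\big)\cdot\phi\dx\dt \to \int_D v_0\cdot\phi\dx\dt$. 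By the representation \eqref{eq:func2} with $g(\xi)=\xi$, the left side equals $\int_D \langle \nu^{\rho,\delta}_{x,t},\xi\rangle\cdot\phi\dx\dt$.

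Finally, I would match this with the narrow limit defining $\nu$. The linear observable $g(\xi)=\xi$ has vanishing recession function $g^\infty\equiv 0$, so exactly as in the proof of Theorem \ref{thm:conv} the narrow convergence $\nu^{\rho,\delta}\wsto\nu$ gives $\int_D \langle \nu^{\rho,\delta}_{x,t},\xi\rangle\cdot\phi\dx\dt \to \int_D \langle \nu_{x,t},\xi\rangle\cdot\phi\dx\dt$ for all $\phi\in C^\infty_0(D)$, with no concentration contribution. Comparing the two limits of $\langle \nu^{\rho,\delta}_{x,t},\xi\rangle$ and invoking uniqueness of distributional limits forces $\langle \nu_{x,t},\xi\rangle = v_0(x)$, which is the claimed equality in $L^2(\bbT^2;\bbR^2)$. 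The main obstacle is the interchange of the zero-perturbation limit with the ensemble average: the subsequences furnished by Lemma \ref{lem:10} depend on $\omega$, and it is precisely the uniqueness hypothesis that removes this dependence and lets me promote almost-everywhere subsequential convergence into convergence of the full family, after which the uniform bound \eqref{enest} makes the exchange of $\lim_{\rho,\delta\to 0}$ and $\int_\Omega\cdot\,dP$ routine.
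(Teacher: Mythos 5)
Your proposal is correct and follows essentially the same route as the paper's own proof: identify $v_0$ as the unique stationary weak solution in the extended Delort class, use the uniqueness hypothesis to upgrade the $\omega$-dependent subsequential limits from Lemma \ref{lem:10} to convergence of the full family $X_{\rho,\delta}(\omega)\to v_0$ weakly in $L^2$, and then pass the limit through the $dP$-integral via the uniform energy bound. Your added care in noting that the linear observable $g(\xi)=\xi$ has vanishing recession function (so the concentration part does not contribute to the first moment) makes explicit a step the paper leaves implicit, but the argument is the same.
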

\begin{proof}
Clearly, the flat vortex sheet $v_0$ is a stationary weak solution of the Euler equations that belongs to the extended Delort class \ref{def:del2} for all time $T \in [0,\infty)$. Under our assumption of uniqueness, $v_0$ is the unique weak solution in this class. Therefore, for every $\omega \in \Omega$, we can extract a further subsequence of $X_{\rho_k,\delta_k}(\omega)$converging weakly to the unique solution $v_0$. The uniqueness of the weak limit in turn implies that we in fact must have $\lim_{\rho, \delta \to 0} X_{\rho,\delta}(\omega) = v_0$ in the weak $L^2$ sense. From this, and the fact that the $X_{\rho,\delta}$ are uniformly bounded in the $L^2$ norm, we obtain that for any test function $\phi$, we have
\begin{align*}
\int_{\bbT^2\times [0,\infty)} \langle \nu_{x,t}, \xi \rangle \phi(x,t) \dx\dt  
&= \lim_{\rho,\delta \to 0} \int _{\bbT^2\times [0,\infty)} \langle \nu_{x,t}^{\rho,\delta}, \xi \rangle \cdot \phi \dx\dt \\
&= \lim_{\rho,\delta \to 0}  \int_\Omega  \left( \int _{\bbT^2\times [0,\infty)}  X_{\rho,\delta}(\omega) \cdot \phi  \dx\dt \right) \; dP(\omega) \\
&=  \int_\Omega  \lim_{\rho,\delta \to 0} \left(\int _{\bbT^2\times [0,\infty)}  X_{\rho,\delta}(\omega) \cdot \phi  \dx\dt \right) \; dP(\omega) \\
&=  \int_\Omega \int _{\bbT^2\times [0,\infty)}  v^0 \cdot \phi  \dx\dt \; dP(\omega) \\
&= \int _{\bbT^2\times [0,\infty)}  v^0 \cdot \phi  \dx\dt.
\end{align*}
We have used the uniform bound on 
$$\int _{\bbT^2\times [0,\infty)}  \vert X_{\rho,\delta}(\omega) \cdot \phi \vert  \dx\dt \le \Vert X_{\rho,\delta} \Vert \Vert \phi \Vert \le C\Vert \phi \Vert ,$$
to justify passing to the limit inside of the $dP$-integral.
Hence $\langle \nu_{x,t}, \xi \rangle = v^0(x)$ for any possible limiting measure-valued solution. 
\end{proof}
We use the admissibility of measure valued solutions to show the following,
\begin{lemma} \label{MVSvar}
Let $\nu$ be an admissible measure-valued solution to the Euler equations \eqref{MVS} with atomic initial data. If the barycenter $\overline \nu(x,t) = \langle \nu_{x,t}, \xi\rangle$ is an energy conserving weak solution to \eqref{EQ}, then $\nu_{x,t} = \delta_{\overline \nu(x,t)}$ is atomic.
\end{lemma}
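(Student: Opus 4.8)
The plan is to run an energy-squeezing argument: the admissibility inequality bounds the \emph{second moment} of $\nu_{x,t}$ from above by the initial energy, while energy conservation of the barycenter bounds the energy of the \emph{mean} from below by the same quantity; since Jensen's inequality always places the mean's energy below the second moment, all three must coincide and the variance is forced to vanish. The first ingredient I would record is the pointwise Jensen (Cauchy--Schwarz) inequality for the probability measure $\nu_{x,t}$,
\[
|\overline\nu(x,t)|^2 = |\langle \nu_{x,t},\xi\rangle|^2 \le \langle \nu_{x,t}, |\xi|^2\rangle,
\]
valid for a.e. $(x,t)$, with equality precisely when $\nu_{x,t}$ is a Dirac mass. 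Integrating in space gives $\int_{\bbT^n} |\overline\nu(x,t)|^2\dx \le \int_{\bbT^n}\langle \nu_{x,t},|\xi|^2\rangle\dx$.

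Next I would assemble the chain of inequalities at a fixed (a.e.) time $t$. Since the initial datum is atomic, $\sigma=\delta_{v_0}$, Definition \ref{admissdef} reads $\int_{\bbT^n}\langle \nu_{x,t}, |\xi|^2\rangle\dx + \lambda_t(\bbT^n) \le \int_{\bbT^n}|v_0|^2\dx$. On the other hand, energy conservation of the weak solution $\overline\nu$, together with $\overline\nu(\cdot,0)=v_0$ (the initial condition encoded in \eqref{MVS} for atomic data), gives $\int_{\bbT^n}|\overline\nu(x,t)|^2\dx = \int_{\bbT^n}|v_0|^2\dx$. Stringing these together,
\[
\int_{\bbT^n}|v_0|^2\dx = \int_{\bbT^n}|\overline\nu|^2\dx \le \int_{\bbT^n}\langle\nu_{x,t},|\xi|^2\rangle\dx \le \int_{\bbT^n}\langle\nu_{x,t},|\xi|^2\rangle\dx + \lambda_t(\bbT^n) \le \int_{\bbT^n}|v_0|^2\dx,
\]
so every inequality is an equality. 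The last equality forces $\lambda_t(\bbT^n)=0$ for a.e. $t$, hence $\lambda=0$. The equality $\int_{\bbT^n}|\overline\nu|^2\dx = \int_{\bbT^n}\langle\nu_{x,t},|\xi|^2\rangle\dx$, combined with the pointwise inequality above, forces pointwise equality $|\overline\nu(x,t)|^2 = \langle\nu_{x,t},|\xi|^2\rangle$ for a.e. $x$; equivalently $\langle \nu_{x,t},|\xi-\overline\nu(x,t)|^2\rangle=0$ a.e. A probability measure of zero variance is a point mass, so $\nu_{x,t}=\delta_{\overline\nu(x,t)}$, as claimed.

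The main obstacle is the correct identification of the barycenter's conserved energy with $\int_{\bbT^n}|v_0|^2\dx$: one must make sure that ``energy conserving weak solution'' is interpreted so that the conserved energy equals the \emph{initial} energy $\int_{\bbT^n}|v_0|^2\dx$, which in turn requires that $\overline\nu$ attains the atomic initial datum $v_0$, as built into the weak formulation \eqref{MVS}. Everything else is elementary; the only remaining point to handle carefully is that the two displayed equalities hold only for almost every $t$, so the conclusion $\nu_{x,t}=\delta_{\overline\nu(x,t)}$ is an almost-everywhere statement in $(x,t)$.
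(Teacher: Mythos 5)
Your proof is correct and follows essentially the same route as the paper: the paper writes the second moment via the exact decomposition $\langle \nu_{x,t},|\xi|^2\rangle = |\overline\nu|^2 + \langle\nu_{x,t},|\xi-\overline\nu|^2\rangle$ and squeezes $E(0) = \tfrac12\int|\overline\nu(\cdot,t)|^2 \le E(t)\le E(0)$, which is just the equality-case form of the Jensen inequality you invoke. The conclusions ($\mathrm{Var}=0$, $\lambda=0$, hence $\nu_{x,t}=\delta_{\overline\nu(x,t)}$ a.e.) are identical.
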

\begin{proof}
We have the following decomposition of the energy $E(t)$ at time $t$:
\begin{align*}
E(t) 
&= \frac12 \int_{\bbT^n}   \vert \overline \nu(x,t) \vert^2 \dx + \frac12 \int_{\bbT^n} \langle \nu_{x,t},  \vert \xi - \overline \nu(x,t) \vert^2 \rangle \dx + \lambda_t(\bbT^n) \\
&= \frac12 \int_{\bbT^n}   \vert \overline \nu(x,t) \vert^2 \dx + \frac 12 \mathrm{Var}_t(\nu) + \lambda_t(\bbT^n).
\end{align*}
The admissibility assumption $E(t) \le E(0)$ combined with the assumption of energy conservation for $\overline \nu(x,t)$ now yields
\begin{align*}
E(0) = \frac12 \int_{\bbT^n}   \vert \overline \nu(x,0) \vert^2 \dx =  \frac12 \int_{\bbT^n}   \vert \overline \nu(x,t) \vert^2 \dx \le E(t) \le E(0).
\end{align*}
Thus, all inequalities in these estimates are equalities. In particular, this implies that $\mathrm{Var}(\nu)=0$ and $\lambda = 0$, hence $\nu_{x,t} = \delta_{\overline \nu(x,t)}$ a.e..
\end{proof}
We combine the above two lemmas to obtain the following theorem about the measure valued solutions corresponding to the flat vortex sheet initial data,
\begin{theorem} \label{delortuniqueness}
If the stationary solution $v_0$ is unique in the extended Delort class of flows with vorticity $\omega \in H^{-1}(\bbT^2)\cap \mathcal{BM}$ and  the (admissible) measure valued solutions $\nu_{\rho,\delta}$ are constructed by applying algorithm \ref{alg1}, then we have $\nu^{\rho,\delta} \wto \delta_{v^0}$ (narrowly sub-sequentially) as $\rho,\delta \to 0$.
\end{theorem}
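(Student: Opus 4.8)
The plan is to combine the two preceding lemmas, Lemma \ref{lem:11} and Lemma \ref{MVSvar}, which together pin down the limiting measure completely. First I would invoke Theorem \ref{thm:alphaconv} to extract a subsequence along which $\nu^{\rho,\delta}$ converges narrowly to an admissible measure-valued solution $(\nu,\lambda,\nu^\infty)$ of \eqref{EQ} with atomic initial data $\delta_{v_0}$. This fixed limit is the object on which the rest of the argument operates, and the content of the theorem is simply to show that this limit is forced to be atomic and concentrated on $v^0$.

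The core of the argument is to identify the barycenter and then deduce atomicity from it. By Lemma \ref{lem:11}, the hypothesis that $v_0$ is the unique weak solution in the extended Delort class forces the mean of the limit to coincide with the stationary datum, i.e. $\overline{\nu}(x,t) = \langle \nu_{x,t},\xi\rangle = v_0(x)$ for a.e. $(x,t)$. Since $v_0$ is the flat vortex sheet \eqref{eq:vs}, which (as noted in Section \ref{subsec:fvs}) is a \emph{stationary} weak solution of \eqref{EQ}, its kinetic energy $\tfrac12\int_{\bbT^2}|v_0|^2\,\dx$ is constant in time; in particular the barycenter $\overline{\nu}$ is an \emph{energy-conserving} weak solution. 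This is precisely the hypothesis required to apply Lemma \ref{MVSvar}.

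Applying Lemma \ref{MVSvar} to the admissible MVS $(\nu,\lambda,\nu^\infty)$, which has atomic initial data and whose barycenter $\overline{\nu} = v_0$ conserves energy, I would conclude that $\mathrm{Var}_t(\nu) = 0$ and $\lambda = 0$, whence $\nu_{x,t} = \delta_{\overline{\nu}(x,t)} = \delta_{v^0(x)}$ for a.e. $(x,t)$. Since the narrow limit of the extracted subsequence is then the atomic Young measure $\delta_{v^0}$, the claimed subsequential narrow convergence $\nu^{\rho,\delta} \wto \delta_{v^0}$ follows at once.

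The step is essentially assembly once the two lemmas are in hand, so there is no genuine analytical obstacle here; the real work has been front-loaded into Lemma \ref{lem:11} (which uses the separation assumption \eqref{eq:sep} together with the Delort-type compactness of Lemma \ref{lem:10} to identify the mean) and into the energy-variance rigidity of Lemma \ref{MVSvar}. The only point that requires a moment's care is recognizing that it is energy \emph{conservation} of the barycenter $\overline{\nu}$, and not merely the admissibility \emph{inequality} $E(t)\le E(0)$ for $\nu$, that drives the conclusion: the variance and the concentration measure are killed precisely by squeezing the admissibility inequality against the equality supplied by the stationarity of $v_0$.
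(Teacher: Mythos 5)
Your proposal is correct and follows exactly the route the paper intends: the paper gives no separate proof of Theorem \ref{delortuniqueness} beyond the remark that it is obtained by ``combining the above two lemmas,'' i.e.\ using Lemma \ref{lem:11} to identify the barycenter with the stationary (hence energy-conserving) datum $v_0$ and then invoking the rigidity of Lemma \ref{MVSvar} to force atomicity. Your explicit observation that it is the energy conservation of $\overline{\nu}=v_0$, played against the admissibility inequality, that kills the variance and the concentration measure is precisely the mechanism inside Lemma \ref{MVSvar}, so nothing is missing.
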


The main conclusion of all the above arguments is that if the week solutions of the Euler equations were unique in the extended Delort class, then the measure valued solution, computed using algorithm \ref{alg1} would be \emph{an atomic measure concentrated on the initial flat vortex sheet}. However, we provided considerable numerical evidence in sub-section \ref{sec:nonatomic} that the computed solutions are \emph{non-atomic}. In fact, the turbulence zone (region where the variance is non-zero) increases linearly in time. Thus, we conclude that the {\bf weak solutions in the extended Delort class are not unique}. 
\begin{remark}
We have to consider the extended Delort class in this paper due to the restriction of periodic boundary conditions.  However, in a forthcoming article \cite{LeoSid1}, the authors consider a projection-finite difference approach to compute measure valued solutions in non-periodic domains. Consequently, we can explore the uniqueness in the Delort class (of vorticities with definite sign) itself. As an example, we can consider the box $[-\pi,\pi]^2$ and initial data,
\begin{equation}
\label{eq:vs10}
v_0(x) = \begin{cases}
(-1,0), &{\rm if}~ x_2 \leq 0, \\
(1,0),  &{\rm if} ~ x_2 > 0.
\end{cases}
\end{equation}
The algorithm \ref{alg1} is applied to this initial data, together with a projection-finite difference method replacing the spectral method in Step 2 of the algorithm \ref{alg1}. The above initial data is clearly a stationary solution of \eqref{EQ} that belongs to the Delort class. The arguments of Theorem \ref{delortuniqueness} can be repeated to show that the computed measure valued solution should be atomic and concentrated on the initial data. We present a figure from \cite{LeoSid1} as figure \ref{fig:16}, where 1-D slices (in $x_2$ direction) for the mean and the variance are presented. Clearly, the variance converges to a non-zero value with a well-defined turbulence zone. Thus, the figure shows that weak solutions, even in the more restricted Delort class, may be non-unique.
\end{remark}
	
\subsubsection{Comparison with the admissible weak solutions of Szekelyhidi.} In \cite{Sz1}, Szekelyhidi was able to construct infinitely many admissible (finite kinetic energy) weak solutions to the 2-D Euler equations for the flat vortex sheet \eqref{initdata}. Although admissible, these weak solutions are highly oscillatory. Hence, they may not belong to the (extended) Delort class as the resulting vorticity is no longer a bounded measure. 

The single samples that we consider lie in the (extended) Delort class but converge to non-unique weak solutions. Furthermore, the computed measure valued solution has a turbulence zone (patches of non-zero variance) that spreads linearly in time. This is remarkably analogous to the construction of Szekelyhidi in \cite{Sz1} where a well defined turbulence zone is also defined and spreads linearly in time. Moreover, the empirical spread rate obtained by us is within the bounds provided by \cite{Sz1}. 
\subsection{Stability (uniqueness) of the computed measure valued solution.}
Admissible (weak) solutions of the Euler equations are not necessarily unique \cite{CDL1,Sz1}. Furthermore, the numerical evidence in the last subsection suggests that even weak solutions, restricted to the considerably narrower Delort class, may not be unique. Since, every weak solution is also a measure valued solution, we cannot expect any uniqueness in the wider class of (admissible) measure valued solutions. However, the measure valued solution that we compute by application of algorithm \ref{alg1} is not a generic measure valued solution but is one that is obtained with a very specific construction. Is this solution unique in a suitable sense? Is it stable? We explore these questions in the following.
\subsubsection{Stability with respect to different numerical methods} A key step (Step $2$) of algorithm \ref{alg1} requires evolving the initial (perturbed) random field with a spectral (viscosity) method. We can replace the spectral method used here with some other consistent numerical method. To do so, we use a projection-finite difference method, constructed in a forthcoming paper \cite{LeoSid1}. This method is very similar to the classical projection method of Chorin \cite{Cho2}, \cite{BCG1}. We show the mean and the variance computed by this method, as compared to the spectral (viscosity) method, for a fixed $(\rho,\delta) = (0.001,0.01)$ and with $N=1024$ Fourier modes. The corresponding results with the projection-finite difference method are obtained on a $1024 \times 1024$ grid (to obtain similar resolution). The mean (of the first component) and the second moment (of the second component) of the approximate Young measure are shown in figure \ref{fig:17}. Comparing these results to the corresponding results obtained with spectral method (see figures \ref{fig:18}, \ref{fig:19}), we observe that there is very little difference in the statistical quantities computed with two very different numerical methods. Similar agreement was observed for different values of the regularization parameters, indicating that the computed measure valued solution is not sensitive to the choice of the underlying numerical method.
\begin{figure}
	\centering
	\begin{subfigure}[b]{.49\textwidth}
		\includegraphics[width=\textwidth]{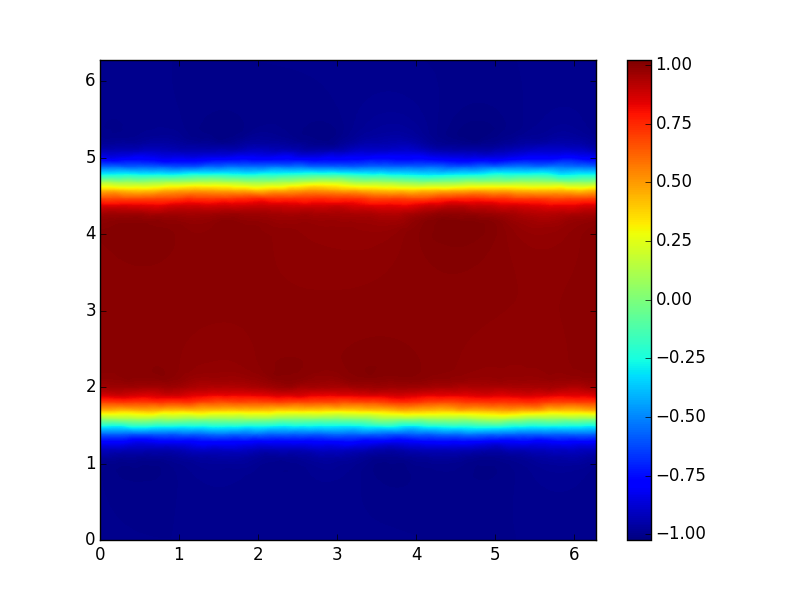}
		\caption{Mean}
	\end{subfigure}
	\begin{subfigure}[b]{.49\textwidth}
		\includegraphics[width=\textwidth]{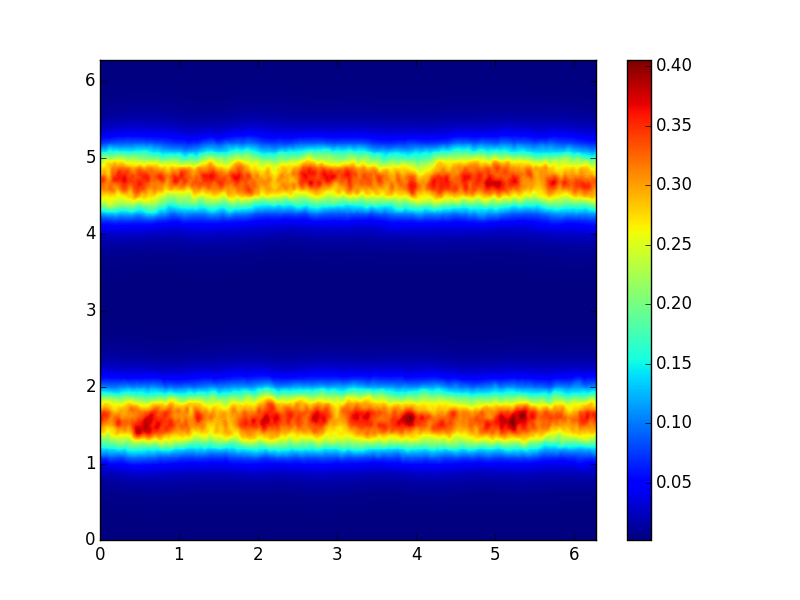}
		\caption{Second moment}
	\end{subfigure}
	\caption{Flat vortex sheet: Left (mean) Right (second moment) from \cite{LeoSid1} at two different times to compare with the spectral method}
	\label{fig:17}
\end{figure}

\begin{figure}
	\begin{subfigure}[b]{.24\textwidth}
\includegraphics[width=\textwidth]{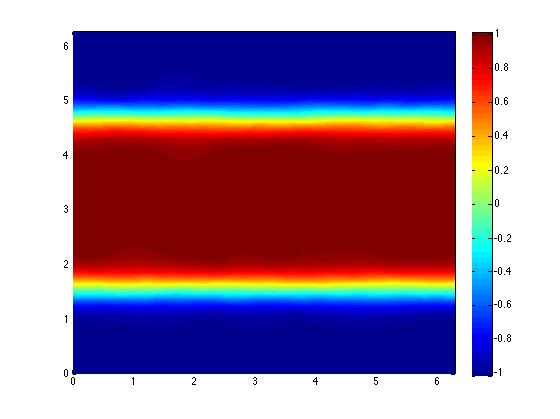}
\caption{interfaces}
\end{subfigure}
\begin{subfigure}[b]{.24\textwidth}
\includegraphics[width=\textwidth]{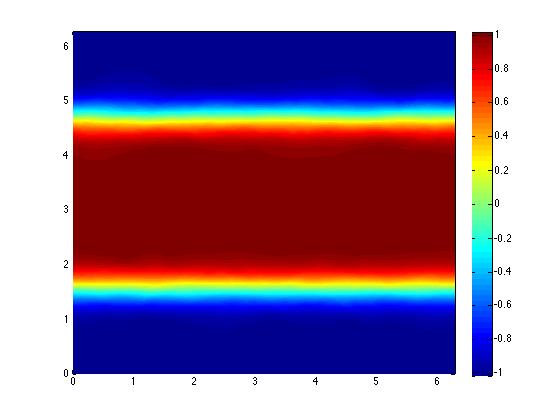}
\caption{uniform}
\end{subfigure} 
\begin{subfigure}[b]{.24\textwidth}
\includegraphics[width=\textwidth]{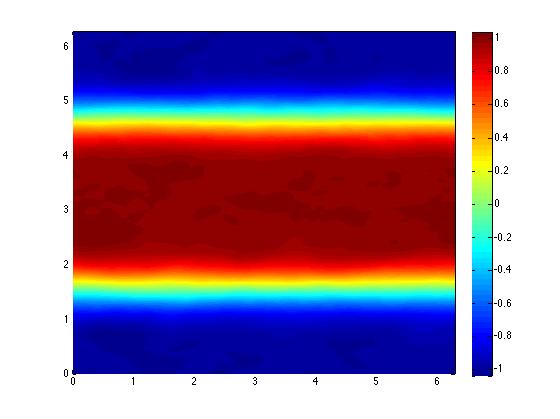}
\caption{uncorrelated}
\end{subfigure}
\begin{subfigure}[b]{.24\textwidth}
\includegraphics[width=\textwidth]{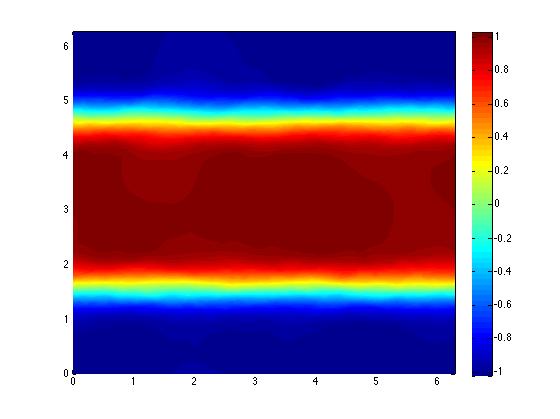}
\caption{Gaussian}
\end{subfigure}

	\caption{Mean for different types of perturbations}
	\label{fig:18}
\end{figure} 
\begin{figure}
\begin{subfigure}[b]{.24\textwidth}
\includegraphics[width=\textwidth]{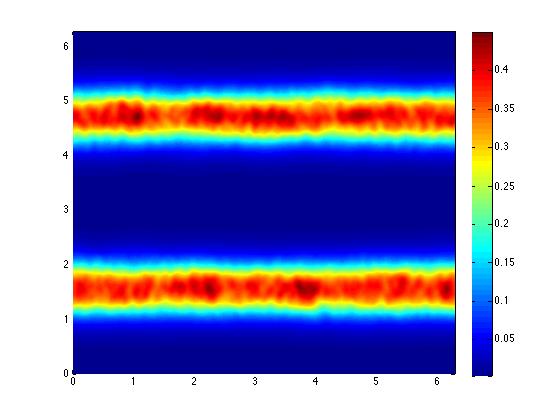}
\caption{interfaces}
\end{subfigure}
\begin{subfigure}[b]{.24\textwidth}
\includegraphics[width=\textwidth]{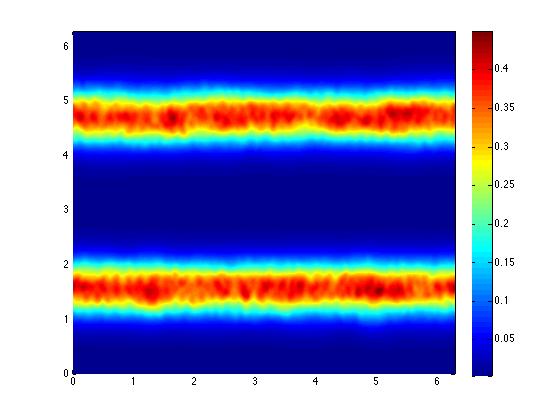}
\caption{uniform}
\end{subfigure} 
\begin{subfigure}[b]{.24\textwidth}
\includegraphics[width=\textwidth]{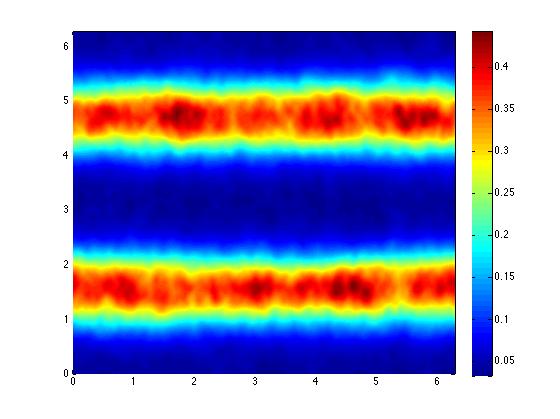}
\caption{uncorrelated}
\end{subfigure}
\begin{subfigure}[b]{.24\textwidth}
\includegraphics[width=\textwidth]{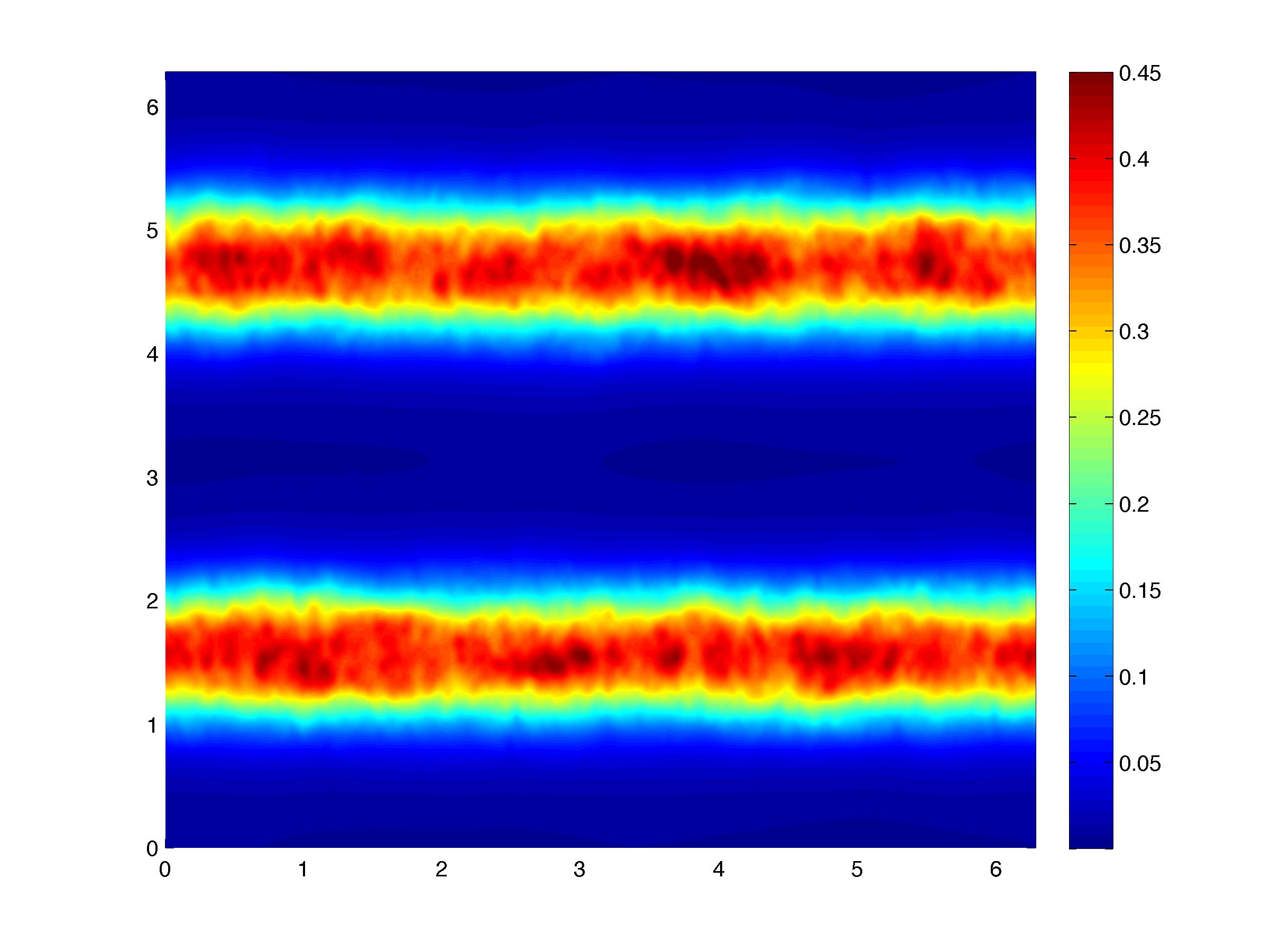}
\caption{Gaussian}
\end{subfigure}
\caption{Second moments for different types of perturbations}
	\label{fig:19}
\end{figure}

\begin{figure}
\begin{subfigure}[b]{.24\textwidth}
\includegraphics[width=\textwidth]{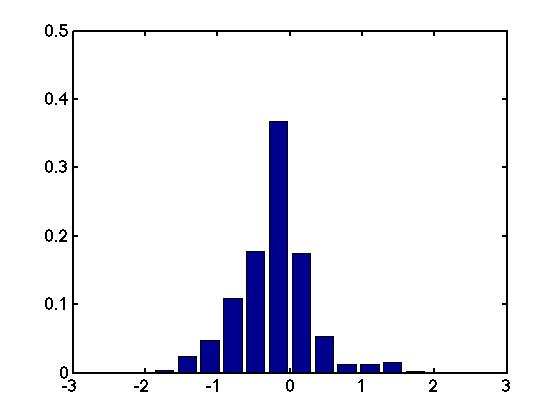}
\caption{interfaces}
\end{subfigure}
\begin{subfigure}[b]{.24\textwidth}
\includegraphics[width=\textwidth]{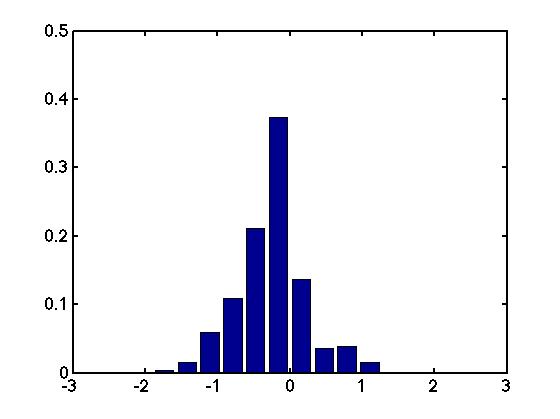}
\caption{uniform}
\end{subfigure} 
\begin{subfigure}[b]{.24\textwidth}
\includegraphics[width=\textwidth]{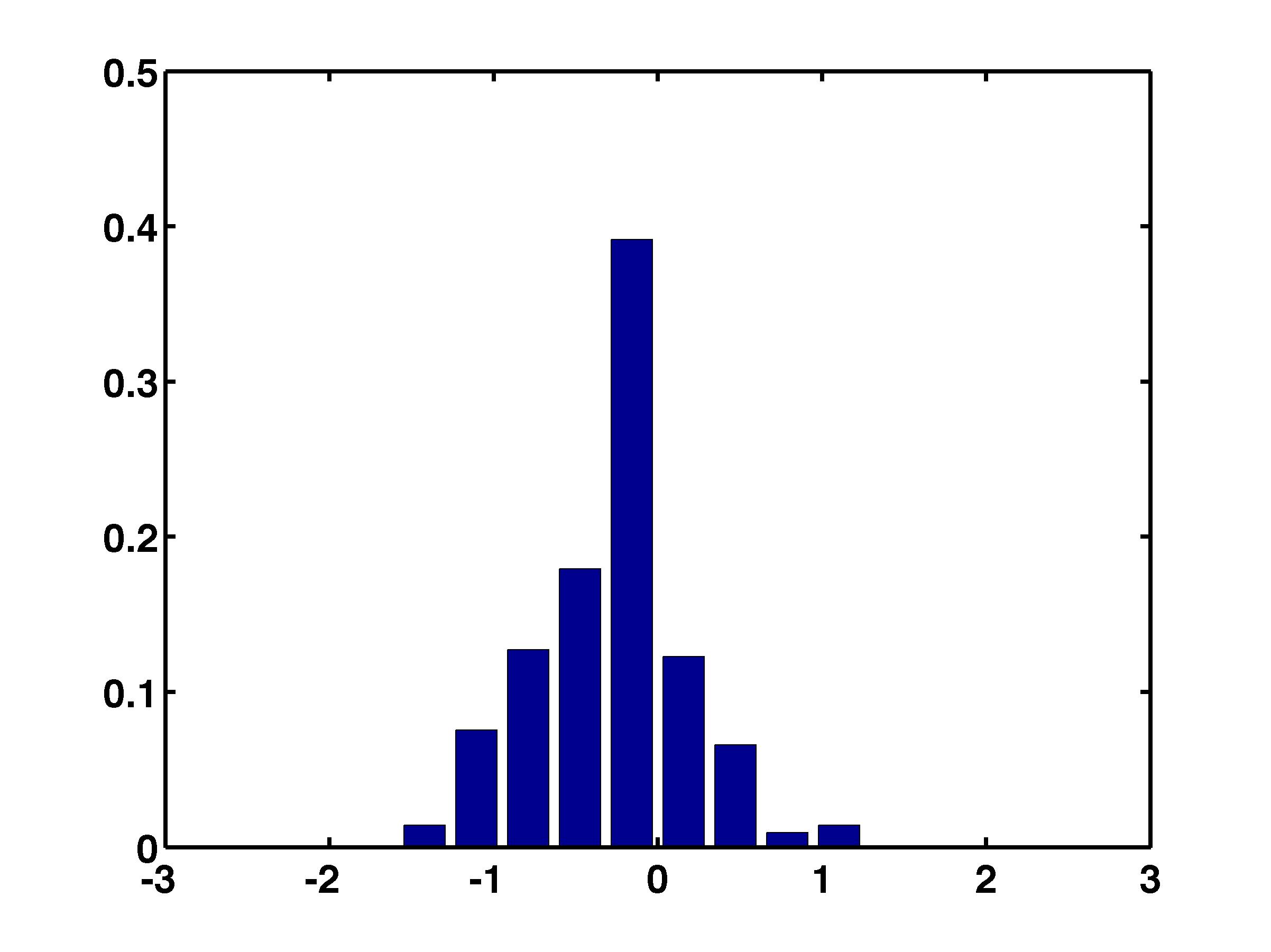}
\caption{uncorrelated}
\end{subfigure}
\begin{subfigure}[b]{.24\textwidth}
\includegraphics[width=\textwidth]{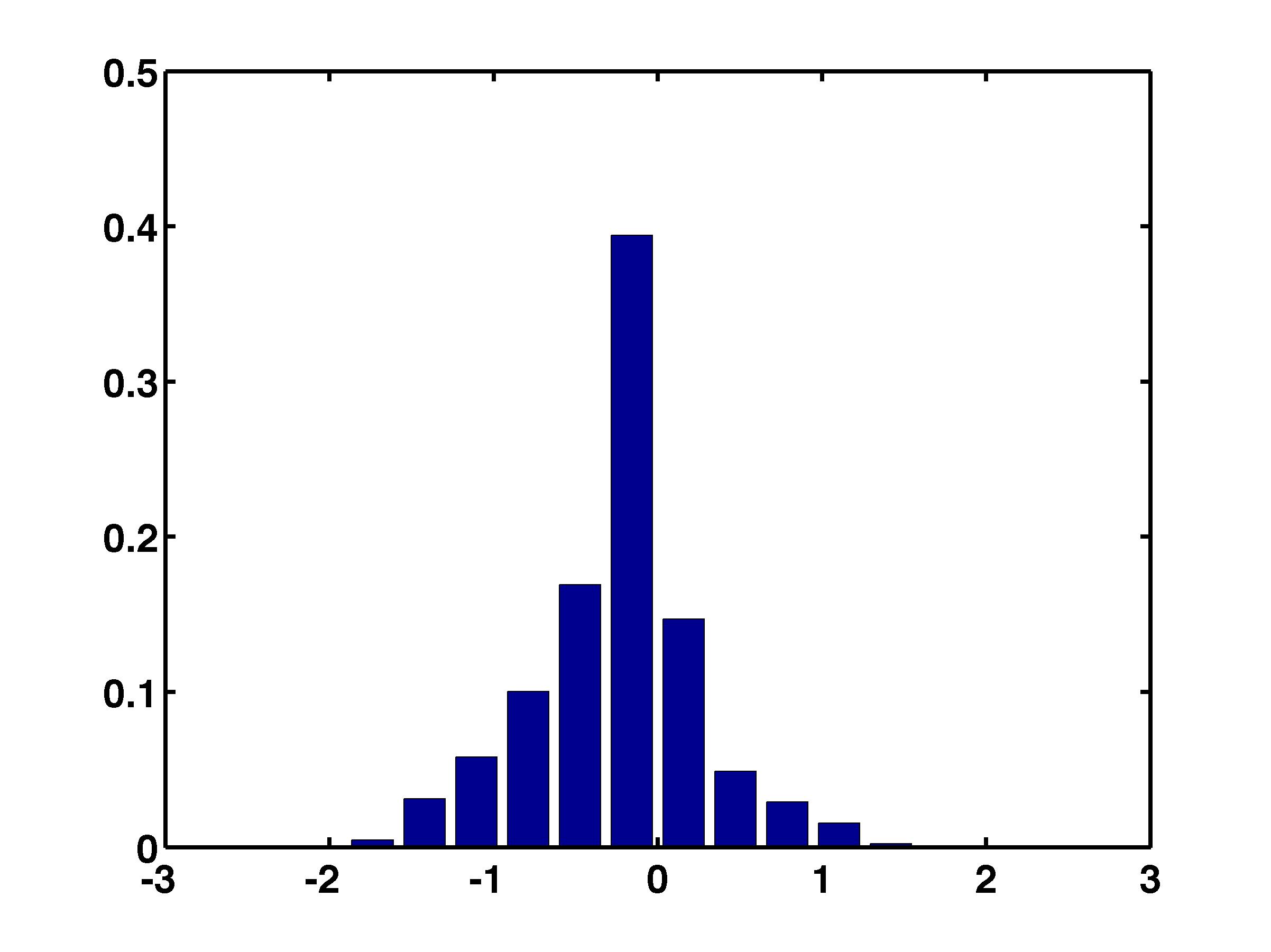}
\caption{Gaussian}
\end{subfigure}
\caption{Different perturbations -- distribution of $x_1$-velocity at a point near the interface, $t = 4$}
\label{fig:20}
\end{figure}

\subsubsection{Stability with respect to different perturbations}
After having demonstrate the robustness of algorithm \ref{alg1} with respect to choice of the numerical method in Step $2$, we investigate if the algorithm is sensitive with respect to the type of perturbations in step $1$. To do this, we consider the most general perturbation to the initial data \eqref{initdata}  by adding a random field that is constant on local patches, and which exhibits uncorrelated fluctuations of equal strength in all of space. More precisely, we considered random fields of the form $X^0 = \sum_{i,j} X^0_{i,j} 1_{{\mathcal C}_{i,j}}$, where the patches are
$${\mathcal C}_{i,j} = \{(x,y) \in \bbT^2 : ik\Delta x \le x < (i+1)k\Delta x, \,  jk\Delta y \le y < (j+1)k\Delta y \}$$
with $k=16$ comprise $16\times 16$ mesh cells, and the $X^0_{i,j}$ are independent, identically distributed $[-1,1]^2$-valued random variables. We obtain our initial perturbations $Z^0_\delta$ as the projection of $v^0 + \delta X^0$ to the space of divergence free vector fields. We refer to the results obtained from this perturbation procedure as `uncorrelated', below.

Note that we can rewrite the evolution equation for the mean $\overline{\nu}$ of the MVS as,
\[\partial_t \overline \nu + \overline \nu \cdot \nabla \overline \nu + \nabla p = -\div \langle \nu, (\xi - \overline \nu) \otimes (\xi - \overline \nu) \rangle.\]
If the fluctuations of the mean $\overline \nu$ in the neighborhood of any given point are an indication of the fluctuations of $\nu$, then we should expect the relevant contributions to the evolution of $\overline \nu$ to originate at the two interfaces, where $\overline \nu$ has a large jump. Hence, we localize the above uncorrelated perturbation to the initial data by multiplying it with cutoff functions that 
are supported around the two interfaces. We refer to the results from these localizations as `uniform' or `Gaussian' according to the corresponding distribution the values of the $X_{i,j}^0$ were chosen from. The results of applying algorithm \ref{alg1} with these perturbations, with amplitude $\delta = 0.05$ and at time $T=4$ are shown in figures \ref{fig:18} (mean), \ref{fig:19} (second moment) and \ref{fig:20} (pdfs). Clearly the computed solutions are very similar to those computed with the sinusoidal perturbations. Also, the nature of underlying distribution does not seem to affect the computed measure valued solution. 

Summarizing the results of this subsection, we remark that the measure valued solution of the two-dimensional Euler equations, computed with the algorithm \ref{alg1}, are stable with respect to perturbations as well as robust vis a vis the choice of numerical method used to approximate them. This indicates that the computed measures may have MV stability, a weaker stability concept introduced in \cite{FKMT1}. Although stability (uniqueness) does not hold for generic (admissible) measure valued solutions, the solutions computed by algorithm \ref{alg1} do belong to a subset of admissible MVS, within which a suitable notion of stability (uniqueness) may hold. Further elaboration of these ideas is envisaged to be the subject of forthcoming articles.

\section{Discussion}
We consider the incompressible Euler equations \eqref{EQ} governing the motion of inviscid incompressible fluids. No global existence and uniqueness results are available in three dimensions. Wellposedness results in two space dimensions are restricted to smooth initial data and exclude such physically interesting flows like vortex sheets. Although Delort \cite{Del1} was able to show the existence of weak solutions for vortex initial data in 2D, uniqueness of such solutions is still open. Similarly, many different types of numerical schemes are available but rigorous convergence results exist only for special cases.

The starting point of the current article was the observation that even a well established numerical method, like the spectral (viscosity) method may not converge (at least for realistic resolutions), even in 2D. Finer and finer scale oscillations are observed as the resolution is increased.

Given the appearance of structures at smaller and smaller scales on increasing the numerical resolution, we follow a recent paper \cite{FKMT1} and postulate that (admissible) measure valued solutions, introduced by DiPerna and Majda \cite{DM1} are an appropriate solution framework for the incompressible Euler equations, particularly with regard to the stability of initial data and the convergence of approximation schemes. 

Our main aim was to design an algorithm to compute measure valued solutions of the Euler equations in a robust and efficient manner. To this end, we modified the ensemble based algorithm proposed in a recent paper \cite{FKMT1} by combining it with the spectral (viscosity) method. We prove that the resulting approximate Young measures converge to an admissible measure valued solution of the Euler equations as the number of Fourier modes increases and the perturbation parameters converge to zero. 

We present a wide variety of numerical experiments to illustrate the theoretical results on the proposed algorithm. In particular, we focus on an extensive case study for the two-dimensional flat vortex sheet. The numerical experiments reveal that
\begin{itemize}
\item Single realizations (samples) may not converge as the number of Fourier modes is increased. Furthermore, there is no convergence as the perturbation amplitude is reduced indicating instability of the flat vortex sheet, at least at realistic numerical resolutions.
\item Statistical quantities of interest such as the mean and variance do converge as the predicted by the theory.
\item Furthermore, the approximate Young measure is observed to converge with respect to the Wasserstein metric on the space of probability measures, indicating a considerably stronger form of convergence of the approximate Young measures than the predicted narrow convergence.
\item The computed measure valued solution is robust with respect to the choice of numerical method as well as the nature of the initial perturbations, suggesting stability of the computed measure valued solution in a suitable sense, for instance in the sense of MV stability of \cite{FKMT1}
\item The computed measure valued solution is non-atomic. The variance is concentrated (spatially) into two patches, symmetric with respect to the line $x_2 = \pi$. This \emph{turbulence zone} spreads in time at a linear rate and is consistent with a theoretical upper bound. 
\end{itemize}

We show analytically that if the weak solutions of the Euler equations are unique in the (extended) Delort class, i.e, the vorticity is a bounded measure, then the resulting measure valued solution, corresponding to the flat vortex sheet, will be atomic and concentrate on the initial data. However, given the observed non-atomicity of the measure, we conclude that the \emph{weak solutions belonging to the Delort class may not be unique}.  This numerical evidence provides a new perspective on an interesting open question and calls for further theoretical investigation. 

It is educative to reevaluate the question of uniqueness (consequently regularity and stability) of solutions of the two-dimensional Euler equations in light of our computations. Smooth solutions (those with smooth initial data) are clearly unique. Whereas, admissible weak solutions (those with finite kinetic energy) are not unique by the results of \cite{CDL1,Sz1}. Delort \cite{Del1} constructed weak solutions whose regularity is between the space of admissible weak solutions ($L^2$) and smooth solutions ($W^{1,\infty}$), namely velocity fields, whose vorticity is a bounded measure (of distinguished sign). Our numerical evidence suggests that these solutions may not be unique. If one starts with initial data where vorticity is a bounded measure, our computations show that numerical approximations converge to a \emph{non-atomic measure valued solution}. The time dynamics are also rich as the initial atomic measure has to burst out (at a linear rate as the average variance grows linearly) into a measure that is non-atomic. Heuristically, it appears as if the non-unique Delort solutions can be put together (weighted) into a measure and this measure could well be the physically relevant solution of the two-dimensional Euler equations with singular initial data. Its stability with respect to perturbations is indicated by our numerical results. It would be interesting to see if this measure can be characterized by some principles of statistical mechanics, such as those explored in \cite{Rob1}.

The results of this paper suggest further exploration on the following aspects,
\begin{itemize}
\item Although our algorithm and convergence results are applicable to three dimensional flows, we only presented two-dimensional results. Employing the ensemble based algorithm to three dimensional flows is being undertaken currently and the relevance  of the concept of measure valued solutions to turbulence will be explored in a forthcoming article.
\item Our algorithm for evaluating averages with respect to the measure valued solution (algorithm \ref{alg:montecarlo}) uses Monte-Carlo approximation of the phase space integrals. Monte Carlo methods are notoriously slow to converge (with respect to the number of samples). Alternative strategies such as the recently developed Multi-level Monte Carlo (MLMC) algorithm \cite{SSSid1} need to extended to compute measure valued solutions, particularly in three space dimensions. The issue of Uncertainty Quantification (UQ) is also germane to this discussion as our formulation automatically allows us to model initial data as probability distributions. In particular, a Multi-level Monte Carlo version of algorithm \ref{alg:montecarlo} can be employed for efficient UQ for incompressible flows, in the framework of measure valued solutions.
\item The characterization of the computed measure valued solution stills need to be performed.  The related question of a rigorous proof of stability (uniqueness) will also be taken up in future articles.
\end{itemize}


\begin{thebibliography}{99}

\bibitem{AB1}
J. J. Alibert and G. Bouchitt{\'e}.
\newblock Non-uniform integrability and generalized Young measures,
\newblock {\em J. Convex. Anal.,} 4 (1), 1997, 129-147.

\bibitem{BT1}
C. Bardos and E. Tadmor.
\newblock Stability and spectral convergence of Fourier method for nonlinear problems. On the shortcomings of the 2/3 de-aliasing method
\newblock {\em Num. Math.,} 2014, to appear.

\bibitem{BCG1}
J. B. Bell, P. Colella and H. M. Glaz.
\newblock A second-order projection method for the incompressible Navier-Stokes equations. 
\newblock {\em J. Comput. Phys}. 85 (1989), no. 2, 257–283.

\bibitem{IS1}
T. Buckmaster, C. DeLellis, P. Isett and L. Sz\'ekelyhidi Jr.
\newblock Anomalous dissipation for $1/5$- H\"older Euler flows.
\newblock {\em Ann. Math.,} 2014, to appear. 

\bibitem{BDS1}
Y. Brenier and C. De Lellis and L. Sz\'ekelyhidi Jr.
\newblock Weak-Strong Uniqueness for Measure-Valued Solutions.
\newblock {\em Comm. Math. Phys.,} 305 (2), 2011,  351--361.

\bibitem{CM1}
A. Chorin and J. E. Marsden.
\newblock A mathematical introduction to fluid mechanics.
\newblock {\em Texts in Applied Mathematics.,} (4), Springer, 1993.

\bibitem{Cho1}
A. Chorin.
\newblock Numerical solution of the Navier-Stokes equations. 
\newblock {\em Math. Comp.}  22, 1968, 745–762.

\bibitem{Cho2}
A. Chorin.
\newblock On the convergence of discrete approximations to the Navier-Stokes equations. 
\newblock {\em Math. Comp.}, 23, 1969, 341–353.

\bibitem{Cho3}
A. Chorin.
\newblock  A vortex method for the study of rapid flow. 
\newblock {\em Proceedings of the Third International Conference on Numerical Methods in Fluid Mechanics (Univ. Paris VI and XI, Paris, 1972),} Vol. II, pp. 100–104. Lecture Notes in Phys., Vol. 19, Springer, Berlin, 1973.

\bibitem{CDL1}
C. De Lellis, L. Sz\'ekelyhidi Jr.
\newblock The Euler equations as a differential inclusion.
\newblock \textit{Ann. of Math.} (2) 170 (2009), no. 3, 1417--1436.

\bibitem{CDL2}
E. Chiodaroli, C. De Lellis, O. Kreml.
\newblock Global ill-posedness of the isentropic system of gas dynamics.
\newblock Preprint, 2013.

\bibitem{CDL3}
C. De Lellis, L. Sz\'ekelyhidi Jr.
\newblock The $h$-principle and the equations of fluid dynamics.
\newblock {\em Bull. Amer. Math. Soc.,} 49 (3), 2012, 349-375.

\bibitem{DS4}
C. De Lellis, L. Sz\'ekelyhidi Jr.
\newblock Dissipative continuous {E}uler flows.
\newblock {\em Invent. Math.,}  193 (2), 2013, 377-407.

\bibitem{Del1}
J. M. Delort.
\newblock Existence de nappes de tourbillon en dimension deux.
\newblock {\em Jl. Amer. Math. Soc.,} 4 (3), 1991, 553-586.

\bibitem{DM1}
R. J. DiPerna and A. Majda.
\newblock Oscillations and concentrations in weak solutions of the incompressible fluid equations.
\newblock \textit{Comm. Math. Phys.} 108 (4), 1987, 667--689.

\bibitem{DM2}
R. J. DiPerna and A. Majda,
\newblock Concentrations in regularizations for {$2$}-{D} incompressible
              flow
\newblock {\em Comm. Pure Appl. Math.,} 40 (3), 1987, 301-345.

\bibitem{ES}
G. L. Eyink and K. Srinivasan.
\newblock Onsager and theory of hydrodynamic turbulence.
\newblock {\em Rev. Modern. Phys.,} 78 (1), 2006, 87-135.


\bibitem{FKMT1}
U. S. Fjordholm, R. K\"appeli, S. Mishra and E. Tadmor.
\newblock Construction of approximate entropy measure valued solutions for hyperbolic systems of conservation laws.
\newblock {\em Preprint,} 2014, available from ArXiv 1402.0909.

\bibitem{FSID3}
U. S. Fjordholm and S. Mishra.
\newblock Computing measure valued solutions of conservation laws with strictly convex entropy.
\newblock {\em In preparation}, 2014.

\bibitem{GST1}
S. Gottlieb, C.-W. Shu and E. Tadmor.
\newblock High order time discretizations with strong stability properties.
\newblock {\em SIAM. Review} 43, 2001, 89--112.

\bibitem{Oz1}
D. Gottlieb, Y. M. Houssaini and S. Orszag.
\newblock  Theory and application of spectral methods.
\newblock {\em Spectral methods for PDEs}, 1-54, SIAM, 1984.

\bibitem{Kras1}
R. Krasny.
\newblock A study of singularity formation in a vortex sheet by the point-vortex approximation. 
\newblock {\em J. Fluid Mech.} 167 (1986), 65–93.

\bibitem{Kras2}
R. Krasny.
\newblock Vortex sheet computations: roll-up, wakes, separation. 
\newblock {\em Vortex dynamics and vortex methods (Seattle, WA, 1990),} 385–402, Lectures in Appl. Math., 28, Amer. Math. Soc., Providence, RI, 1991.

\bibitem{LeoSid1}
F. Leonardi and S. Mishra.
\newblock A projection-finite difference method for computing measure valued solutions of the incompressible Euler
equations.
\newblock {\em In preparation}.

\bibitem{SL1}
J. S. Lowengrub and M. J. Shelley.
\newblock Exact desingularization and local regridding for vortex methods. 
\newblock {\em Vortex dynamics and vortex methods (Seattle, WA, 1990), }341–362, Lectures in Appl. Math., 28, Amer. Math. Soc., Providence, RI, 1991.

\bibitem{Maj1}
A. Majda.
\newblock Vorticity, turbulence, and acoustics in fluid flow. 
\newblock {\em SIAM Rev.} 33 (1991), no. 3, 349–388.

\bibitem{bertozzi}
A. Majda and A. Bertozzi.
\newblock Vorticity and incompressible flow.
\newblock {\em Cambridge university press,} 2002.

\bibitem{Maj2}
A. Majda.
\newblock Remarks on weak solutions for vortex sheets with a
              distinguished sign.
\newblock {\em Indiana Univ. Math. J.}, 42 (3), 1993, 921-939.

\bibitem{SSSid1}
S. Mishra, Ch. Schwab and J. Sukys.
\newblock Multi-level Monte Carlo finite volume methods for nonlinear systems of conservation laws in multi-dimensions.
\newblock \textit{J. Comput. Phys} 231 (8), 2012, 3365--3388.

\bibitem{Rob1}
R. Robert.
\newblock Statistical hydrodynamics (Onsager revisited).
\newblock {\em In Handbook of mathematical fluid dynamics}, Vol II, North Holland, 2003, 1 -54.

\bibitem{Sch1}
V. Scheffer.
\newblock An inviscid flow with compact support in space-time.
\newblock {\em J. Geom. Anal.,} 3 (4), 1993, 343-401.

\bibitem{Sn1}
A. Shrirelman.
\newblock Weak solutions with decreasing energy of the incompressible Euler equations.
\newblock {\em Comm. Math. Phys.,} 210 (3), 2000, 541-603.

\bibitem{schochet}
S. Schochet.
\newblock The weak vorticity formulation of the {$2$}-{D} {E}uler
              equations and concentration-cancellation.
\newblock {\em Comm. Partial Differential Equations},  20 (5-6), 1995, 1077-1104.

\bibitem{SW1}
L. Sz{\'e}kelyhidi and E. Weidemann.
\newblock Young measures generated by ideal incompressible fluid flows
\newblock {\em Arch. Rat. Mech. Anal.,} 206 (1), 2012, 333-366

\bibitem{Sz1}
L. Sz{\'e}kelyhidi.
\newblock Weak solutions to the incompressible {E}uler equations with
              vortex sheet initial data.
\newblock {\em C. R. Math. Acad. Sci. Paris}, 349 (19-20), 2011, 1063-1066.

\bibitem{Tad1}
E. Tadmor.
\newblock Convergence of spectral methods for nonlinear conservation
              laws.
\newblock {\em SIAM Jl. Num. Anal.,} 26 (1), 1989, 30-44.

\bibitem{spectralerrorestimate}
E. Tadmor.
\newblock Total variation and error estimates for spectral viscosity
              approximations
\newblock {\em Math. Comput.,} 60, 1993, 245-256.

\bibitem{T1}
L. Tartar.
\newblock Compensated compactness and applications to partial differential equations.
\newblock \textit{Nonlinear analysis and mechanics: Heriot-Watt Symposium}, Vol. IV, Pitman, 1979, 136--212.



\end{thebibliography}
\end{document}